\begin{document}
%
%
\theoremstyle{plain}
\swapnumbers
    \newtheorem{thm}[figure]{Theorem}
    \newtheorem{prop}[figure]{Proposition}
    \newtheorem{lemma}[figure]{Lemma}
    \newtheorem{keylemma}[figure]{Key Lemma}
    \newtheorem{corollary}[figure]{Corollary}
    \newtheorem{fact}[figure]{Fact}
    \newtheorem{subsec}[figure]{}
    \newtheorem*{propa}{Proposition A}
    \newtheorem*{thma}{Theorem A}
    \newtheorem*{corb}{Corollary B}
    \newtheorem*{corc}{Corollary C}
\theoremstyle{definition}
    \newtheorem{defn}[figure]{Definition}
    \newtheorem{examples}[figure]{Examples}
    \newtheorem{notn}[figure]{Notation}
    \newtheorem{summary}[figure]{Summary}
     \newtheorem{question}[figure]{Question}
\theoremstyle{remark}
\newtheorem{remark}[figure]{Remark}
        \newtheorem{construction}[figure]{Construction}
        \newtheorem{remarks}[figure]{Remarks}
        \newtheorem{example}[figure]{Example}
        \newtheorem{warning}[figure]{Warning}
    \newtheorem{assume}[figure]{Assumption}
    \newtheorem{ack}[figure]{Acknowledgements}
\renewcommand{\thefigure}{\arabic{section}.\arabic{figure}}
%
%
\newenvironment{myeq}[1][]
{\stepcounter{figure}\begin{equation}\tag{\thefigure}{#1}}
{\end{equation}}
\newcommand{\myeqn}[2][]
{\stepcounter{figure}\begin{equation}
     \tag{\thefigure}{#1}\vcenter{#2}\end{equation}}
\newcommand{\mydiag}[2][]{\myeq[#1]\xymatrix{#2}}
\newcommand{\mydiagram}[2][]
{\stepcounter{figure}\begin{equation}
     \tag{\thefigure}{#1}\vcenter{\xymatrix{#2}}\end{equation}}
\newcommand{\mysdiag}[2][]
{\stepcounter{figure}\begin{equation}
     \tag{\thefigure}{#1}\vcenter{\xymatrix@R=15pt@C=35pt{#2}}\end{equation}}
\newcommand{\mytdiag}[2][]
{\stepcounter{figure}\begin{equation}
     \tag{\thefigure}{#1}\vcenter{\xymatrix@R=25pt@C=50pt{#2}}\end{equation}}
\newcommand{\myudiag}[2][]
{\stepcounter{figure}\begin{equation}
     \tag{\thefigure}{#1}\vcenter{\xymatrix@R=25pt@C=15pt{#2}}\end{equation}}
\newcommand{\myvdiag}[2][]
{\stepcounter{figure}\begin{equation}
     \tag{\thefigure}{#1}\vcenter{\xymatrix@R=19pt@C=30pt{#2}}\end{equation}}
\newcommand{\mywdiag}[2][]
{\stepcounter{figure}\begin{equation}
     \tag{\thefigure}{#1}\vcenter{\xymatrix@R=30pt@C=10pt{#2}}\end{equation}}
\newcommand{\myxdiag}[2][]
{\stepcounter{figure}\begin{equation}
     \tag{\thefigure}{#1}\vcenter{\xymatrix@R=25pt@C=1pt{#2}}\end{equation}}
%
%
\newenvironment{mysubsection}[2][]
{\begin{subsec}\begin{upshape}\begin{bfseries}{#2.}
\end{bfseries}{#1}}
{\end{upshape}\end{subsec}}
\newenvironment{mysubsect}[2][]
{\begin{subsec}\begin{upshape}\begin{bfseries}{#2\vsn.}
\end{bfseries}{#1}}
{\end{upshape}\end{subsec}}
\newcommand{\sect}{\setcounter{figure}{0}\section}
%
%
\newcommand{\wh}{\ -- \ }
\newcommand{\wwh}{-- \ }
\newcommand{\w}[2][ ]{\ \ensuremath{#2}{#1}\ }
\newcommand{\ww}[1]{\ \ensuremath{#1}}
\newcommand{\www}[2][ ]{\ensuremath{#2}{#1}\ }
\newcommand{\wwb}[1]{\ \ensuremath{(#1)}-}
\newcommand{\wb}[2][ ]{\ (\ensuremath{#2}){#1}\ }
\newcommand{\wref}[2][ ]{\ (\ref{#2}){#1}\ }
\newcommand{\wwref}[3][ ]{\ (\ref{#2})-(\ref{#3}){#1}\ }
%
%
\newcommand{\hs}{\hspace*{5 mm}}
\newcommand{\hsm}{\hspace*{2 mm}}
\newcommand{\hsn}{\hspace{2 mm}}
\newcommand{\hsp}{\hspace*{9 mm}}
\newcommand{\vs}{\vspace{5 mm}}
\newcommand{\vsm}{\vspace{3 mm}}
%
%
\newcommand{\hra}{\hookrightarrow}
\newcommand{\xra}[1]{\xrightarrow{#1}}
\newcommand{\xepic}[1]{\xrightarrow{#1}\hspace{-5 mm}\to}
\newcommand{\lora}{\longrightarrow}
\newcommand{\lra}[1]{\langle{#1}\rangle}
\newcommand{\xhra}[1]{\overset{#1}{\hookrightarrow}}
\newcommand{\efp}{\to\hspace{-1.5 mm}\rule{0.1mm}{2.2mm}\hspace{1.2mm}}
\newcommand{\efpic}{\mbox{$\to\hspace{-3.5 mm}\efp$}}
\newcommand{\up}[1]{\sp{(#1)}}
\newcommand{\bup}[1]{\sp{[{#1}]}}
\newcommand{\lo}[1]{\sb{(#1)}}
\newcommand{\lolr}[1]{\sb{\lra{#1}}}
\newcommand{\bp}[1]{\sb{[#1]}}
\newcommand{\rest}[1]{\lvert\sb{#1}}
\newcommand{\sms}[2]{{#1}\wedge{#2}}
\newcommand{\hfsm}[2]{{#1}\ltimes{#2}}
\newcommand{\ii}{\wwb{\infty,1}}
\newcommand{\adj}[2]{\substack{{#1}\\ \rightleftharpoons \\ {#2}}}
%
%
\newcommand{\ab}{\operatorname{ab}}
\newcommand{\coc}{\operatorname{co}}
\newcommand{\cocon}{\operatorname{cocon}}
\newcommand{\Cof}{\operatorname{Cof}}
\newcommand{\Coker}{\operatorname{Coker}}
\newcommand{\colim}{\operatorname{colim}}
\newcommand{\colimit}[1]
{\raisebox{-1.7ex}{$\stackrel{\textstyle\colim}{\scriptstyle{#1}}$}}
\newcommand{\csk}[1]{\operatorname{csk}\sb{#1}}
\newcommand{\cskn}[2]{\csk{#2}\sp{#1}}
\newcommand{\eval}{\operatorname{eval}}
\newcommand{\Fib}{\operatorname{Fib}}
\newcommand{\hocofib}{\operatorname{hocofib}}
\newcommand{\hocolim}{\operatorname{hocolim}}
\newcommand{\ho}{\operatorname{ho}}
\newcommand{\holim}{\operatorname{holim}}
\newcommand{\Hom}{\operatorname{Hom}}
\newcommand{\Id}{\operatorname{Id}}
\newcommand{\id}{\operatorname{Id}}
\newcommand{\Image}{\operatorname{Im}}
\newcommand{\Ker}{\operatorname{Ker}}
\newcommand{\lev}{\operatorname{lev}}
\newcommand{\Obj}[1]{\operatorname{Obj}\,{#1}}
\newcommand{\op}{\sp{\operatorname{op}}}
\newcommand{\Reedy}{\operatorname{Reedy}}
\newcommand{\sk}[1]{\operatorname{sk}\sb{#1}}
\newcommand{\Tot}{\operatorname{Tot}}
%
%
\newcommand{\map}{\operatorname{map}}
\newcommand{\Map}{\operatorname{Map}}
\newcommand{\MAP}{\mathbf{Map}}
\newcommand{\mapa}{\map\sb{\ast}}
%
%
\newcommand{\Ei}[3]{E\sb{#1}\sp{{#2},{#3}}}
\newcommand{\Eis}[2]{E\sb{#1}(#2)}
\newcommand{\Eu}[3]{E\sp{#1}\sb{{#2},{#3}}}
\newcommand{\Eus}[1]{E\sp{#1}}
\newcommand{\Euz}[1]{E\sp{0}({#1})}
\newcommand{\Eot}[2]{\Eu{1}{#1}{#2}}
\newcommand{\Ett}[2]{\Eu{2}{#1}{#2}}
%
%
\newcommand{\Cs}{C\sb{\ast}}
\newcommand{\Cus}{C\sp{\ast}}
\newcommand{\hC}{\hat{\C}}
\newcommand{\Ds}{D\sb{\ast}}
\newcommand{\Dus}{D\sp{\ast}}
\newcommand{\cM}[1]{C\sb{#1}}
\newcommand{\cZ}[1]{Z\sb{#1}}
\newcommand{\df}[1]{\partial\sb{#1}}
\newcommand{\wdf}[1]{\widehat{\partial}\sb{#1}}
\newcommand{\oG}[1]{\overline{G}\sb{#1}}
\newcommand{\oS}[1]{\boxtimes S\sp{#1}}
\newcommand{\odisc}[1]{\boxtimes e\sp{#1}}
\newcommand{\sA}[1]{\Sigma\sp{#1}\bA}
\newcommand{\CsA}[1]{C\Sigma\sp{#1}\bA}
\newcommand{\Ws}{W\sb{\ast}}
%
%
\newcommand{\fB}{\mathfrak{B}}
\newcommand{\fC}{\mathfrak{C}}
\newcommand{\fd}{\mathfrak{d}}
\newcommand{\fy}{\mathfrak{h}}
\newcommand{\hy}{\hat{\fy}}
%
%
\newcommand{\A}{\mathcal{A}}
\newcommand{\B}{\mathcal{B}}
\newcommand{\C}{\mathcal{C}}
\newcommand{\D}{\mathcal{D}}
\newcommand{\E}{\mathcal{E}}
\newcommand{\F}{\mathcal{F}}
\newcommand{\G}{\mathcal{G}}
\newcommand{\HH}{\mathcal{H}}
\newcommand{\J}{\mathcal{J}}
\newcommand{\LLc}{\mathcal{L}}
\newcommand{\LH}{\LLc\sb{H}}
\newcommand{\LHX}{\LH(X)}
\newcommand{\LK}{\LLc\sb{K}}
\newcommand{\LW}{\LLc\sb{\W}}
\newcommand{\LWX}{\LW(X)}
\newcommand{\LR}{\LLc\sb{\Reedy}}
\newcommand{\M}{\mathcal{M}}
\newcommand{\PPc}{\mathcal{P}}
\newcommand{\R}{\mathcal{R}}
\newcommand{\Ss}{\mathcal{S}}
\newcommand{\Sa}{\Ss\sb{\ast}}
\newcommand{\Sr}{\Ss\sp{\red}}
\newcommand{\U}{\mathcal{U}}
\newcommand{\W}{\mathcal{W}}
\newcommand{\Z}{\mathcal{Z}}
%
%
\newcommand{\Ab}{\mbox{\sf Ab}}
\newcommand{\Ch}{\mbox{\sf Ch}}
\newcommand{\Chn}[2]{\Ch\sp{#1}\sb{#2}}
\newcommand{\enk}[2]{\eta\sp{#1}\sb{#2}}
\newcommand{\lnk}[2]{\lambda\sp{#1}\sb{#2}}
\newcommand{\rnk}[2]{\rho\sp{#1}\sb{#2}}
\newcommand{\tnk}[2]{\tau\sp{#1}\sb{#2}}
\newcommand{\cCh}{\coc\!\Ch}
\newcommand{\DK}{\mbox{\sf DK}}
\newcommand{\Kan}{\mbox{\sf Kan}}
\newcommand{\LKE}{\mbox{\sf LKE}}
\newcommand{\Set}{\mbox{\sf Set}}
\newcommand{\sSet}{\mbox{\sf sSet}}
\newcommand{\Seta}{\Set\sb{\ast}}
\newcommand{\sSeta}{\sSet\sb{\ast}}
\newcommand{\Cat}{\mbox{\sf Cat}}
\newcommand{\sCat}{\mbox{\sf sCat}}
\newcommand{\sCata}{\sCat\sb{\ast}}
\newcommand{\Top}{\mbox{\sf Top}}
\newcommand{\Topa}{\Top\sb{\ast}}
\newcommand{\Tw}{\mbox{\sf Tw}}
%
%
\newcommand{\mA}{\mathscr{A}}
\newcommand{\mB}{\mathscr{B}}
\newcommand{\mC}{\mathscr{C}}
\newcommand{\mM}{\mathscr{M}}
\newcommand{\mW}{\mathscr{W}}
\newcommand{\mX}{\mathscr{X}}
\newcommand{\mY}{\mathscr{Y}}
%
%
\newcommand{\CC}{\mathbb C}
\newcommand{\FF}{\mathbb F}
\newcommand{\II}{\mathbb I}
\newcommand{\NN}{\mathbb N}
\newcommand{\QQ}{\mathbb Q}
\newcommand{\RR}{\mathbb R}
\newcommand{\ZZ}{\mathbb Z}
%
%
\newcommand{\Del}{\mathbf{\Delta}}
\newcommand{\Dell}[1]{\Delta\sb{\leq{#1}}}
\newcommand{\Du}{\Del\sp{\bullet}}
\newcommand{\rDel}{\sb{+}\!\Delta}
\newcommand{\rDell}[1]{\sb{+}\Delta\sb{#1}}
\newcommand{\rDele}[1]{\sb{+}\Delta\sb{\leq{#1}}}
\newcommand{\Deln}[1]{\Delta\sp{#1}}
\newcommand{\Dop}{\Delta\op}
\newcommand{\rDop}{\rDel\op}
\newcommand{\rDopl}[1]{\rDop\sb{\leq{#1}}}
\newcommand{\res}{\operatorname{res}}
\newcommand{\cpX}{c\sp{+}X}
\newcommand{\cpn}[2]{c\sp{+}\sb{#1}{#2}}
\newcommand{\cpnX}[1]{\cpn{#1}{X}}
\newcommand{\spX}{s\sb{+}X}
\newcommand{\spS}{s\sb{+}\Sa}
\newcommand{\spn}[2]{s\sb{+}\sp{#1}{#2}}
\newcommand{\spnX}[1]{\spn{#1}{X}}
%
%
\newcommand{\bA}{{\mathbf A}}
\newcommand{\tA}{\widetilde{\bA}}
\newcommand{\bB}{{\mathbf B}}
\newcommand{\bC}{{\mathbf C}}
\newcommand{\bD}{{\mathbf D}}
\newcommand{\Dnmp}[3]{\bD\sp{#1}\sb{#2}(#3)}
\newcommand{\bDs}{\bD\sb{\ast}}
\newcommand{\tDs}{\widetilde{\bD}\sb{\ast}}
\newcommand{\bM}{{\mathbf M}}
\newcommand{\bQ}{{\mathbf Q}}
\newcommand{\bS}[1]{{\mathbf S}\sp{#1}}
\newcommand{\bU}{{\mathbf U}}
\newcommand{\bV}{{\mathbf V}}
\newcommand{\bW}{{\mathbf W}}
\newcommand{\oW}[1]{\overline{\bW}\sp{#1}}
\newcommand{\bX}{{\mathbf X}}
\newcommand{\hX}{\widehat{\bX}}
\newcommand{\tX}{\widetilde{\bX}}
\newcommand{\oX}[1]{\overline{\bX}\sb{#1}}
\newcommand{\bY}{{\mathbf Y}}
\newcommand{\hY}{\widehat{\bY}}
\newcommand{\bZ}{{\mathbf Z}}
%
%
\newcommand{\vare}{\varepsilon}
\newcommand{\var}[1]{\vare\sb{#1}}
\newcommand{\gam}[1]{\gamma\lo{#1}}
%
%
\newcommand{\bd}{\mathbf{d}\sb{0}}
\newcommand{\cod}[1]{c\sb{+}({#1})\sb{\bullet}}
\newcommand{\cu}[1]{c({#1})\sp{\bullet}}
\newcommand{\Lu}{\Lambda\sp{\bullet}}
\newcommand{\szero}[1]{\sigma\sp{\ast}(#1)}
\newcommand{\Cd}[1]{\bC[{#1}]\sb{\bullet}}
\newcommand{\Dup}[1]{\bD\sp{\bullet}\bp{#1}}
\newcommand{\Fu}{F\sp{\bullet}}
\newcommand{\Mu}{M\sp{\bullet}}
\newcommand{\Qu}{\bQ\sp{\bullet}}
\newcommand{\Ud}{\bU\sb{\bullet}}
\newcommand{\Vd}{V\sb{\bullet}}
\newcommand{\Wd}{\bW\sb{\bullet}}
\newcommand{\Wu}{\bW\sp{\bullet}}
\newcommand{\wu}{w\sp{\bullet}}
\newcommand{\wwd}{w\sb{\bullet}}
\newcommand{\Xd}{\bX\sb{\bullet}}
\newcommand{\xd}{x\sb{\bullet}}
\newcommand{\xu}{x\sp{\bullet}}
\newcommand{\hXd}{\hX\sb{\bullet}}
\newcommand{\Yd}{\bY\sb{\bullet}}
\newcommand{\yd}{y\sb{\bullet}}
\newcommand{\yu}{y\sp{\bullet}}
\newcommand{\Zd}{\bZ\sb{\bullet}}
\newcommand{\zd}{z\sb{\bullet}}
\newcommand{\zu}{z\sp{\bullet}}
\newcommand{\bk}{[\mathbf{k}]}
\newcommand{\bn}{[\mathbf{n}]}
\newcommand{\bmm}{[\mathbf{m}]}
\newcommand{\bnm}{[\mathbf{n}-\mathbf{m}]}
\newcommand{\bnmp}{[\mathbf{n}-\mathbf{m}+\mathbf{1}]}
\newcommand{\bone}{[\mathbf{1}]}
\newcommand{\lone}{\lra{\mathbf{1}}}
\newcommand{\brp}{[\mathbf{r}+\mathbf{1}]}
\newcommand{\Po}[1]{P\sp{#1}}
%
%
\title{Localization of $(\infty,1)$-categories and spectral sequences}
%
%
\author[D.~Blanc]{David Blanc}
\address{Department of Mathematics\\ University of Haifa\\ 3498838 Haifa\\ Israel}
\email{blanc@math.haifa.ac.il}
\author[N.J.~Meadows]{Nicholas J.\ Meadows}
\address{Department of Mathematics\\ University of Haifa\\ 3498838 Haifa\\ Israel}
\email{njmead81118@gmail.com}
\date{\today}
\subjclass[2010]{Primary: 55T05. Secondary: 18G40, 55P60, 55U35}
\keywords{Spectral sequence, $\infty$-category, differential, localization,
 (co)simplicial object}

\begin{abstract}
We describe two types of localization for \wwb{\infty,1}categories which determine the
successive terms in the homotopy spectral sequence of a (co)simplicial object.
\end{abstract}

\maketitle

\setcounter{section}{0}

\section*{Introduction}
\label{cint}

The goal of this paper is to try to understand the information contained in the
successive terms of spectral sequences, from the point of view of
\wwb{\infty,1}categories. Although many of the spectral sequences in common use are
stable, and may be described completely in terms of towers of spectra or filtered
chain complexes, we concentrate here on the unstable version, and more specifically
the homotopy spectral sequence of a (co)simplicial object.

The main reason is that in this paper we are only concerned with the differentials
(and thus the  successive finite terms) in spectral sequences \wh and these
are more complicated, and thus more illuminating, in the unstable version.
Moreover, in many cases of interest, the differentials in a stable spectral sequence
are determined by those in an associated unstable spectral sequence (e.g., for
the Adams spectral sequence).

In \cite{CELWhitM},  Cirici, Egas Santander, Livernet, and Whitehouse analyze the
spectral sequence of a filtered complex of $R$-modules, for any commutative
ring $R$, in a similar spirit. However, they do this in the context of model categories,
while the setting of \wwb{\infty,1}categories seems more appropriate for our purposes.

Our goal here is to understand two seemingly contradictory phenomena:
on the one hand, in the successive terms of a spectral sequence we discard extraneous
information, as we see from the fact that a map \w{f:\xd\to\yd} of simplicial objects
inducing an isomorphism in the \ww{E\sp{r}}-term necessarily induces an isomorphism in
the \ww{E\sp{r+1}}-term, but not conversely.  On the other hand, we need
more (and higher order) information to compute \w{d\sp{r+1}} from the given \w{\xd}
than we do for \w[.]{d\sp{r}} The reason is that as we proceed in the spectral sequence,
we require less data from the original (co)simplicial space, but gain knowledge of the
abutment.

This suggests that we need two types of localizations in order to analyze
the successive terms of the spectral sequence of a (co)simplicial object
in an \wwb{\infty,1}category, which may be described as follows\vsm:

The first part of the paper is a study of Cisinski's quasi-category version
of a (co)fibration category, reviewed in Section \ref{cqccf},
and the left Bousfield localization with respect to a set of maps 
(in Section \ref{clbl}) or a set of objects (in Section \ref{clocth}).
In Section \ref{crbl} we construct right Bousfield localizations with respect
a set of objects in a quasi-category equipped with classes of weak equivalences and
either cofibrations or fibrations.

The second part of the paper applies this theory to spectral sequences:
in Section \ref{csssso} we recall from \cite{BMeadS} the description of the homotopy
spectral sequence of a simplicial object \w{\xd} in an \wwb{\infty,1}category $X$,
with respect to a given homotopy cogroup object $\hy$ in $X$, and show how to
reinterpret the construction in terms of chain complexes in spaces,
yielding a convenient diagrammatic description of the differentials
(see \S \ref{sdiagdesc}).

In Section \ref{cssl} we use this description to show that the
\ww{d\sp{r}}-differential in the spectral sequence depends only on the Postnikov
section \w[.]{\Po{r-1}\Omega\sp{p}\Map\sb{X}(\hy,\xd)} This dependence can be made
more precise using a certain countable collection \w{\HH\sp{r}} of finite segments
of simplicial objects \w{H(n,m,\Sigma\sp{p}\hy)} of length \w{m\leq r-1} (see
\S \ref{dzmnp}). We define the $r$-\emph{stem} for \w{\lra{\xd,\hy}} to be the system
\begin{myeq}\label{eqrstem}
\{\Po{m}\Map(H(n,m,\Sigma\sp{p}\hy),\tau\sp{\ast}\xd)\}
\sb{H(n,m,\Sigma\sp{p}\hy)\in\HH\sp{r}}~,
\end{myeq}
\noindent and show:
\begin{thma}\label{nthma}
For each \w[,]{r\geq 2} the \ww{E\sp{r}}-term of the spectral sequence
associated to \w{\lra{\xd,\hy}} is determined by its \wwb{r-2}stem.
\end{thma}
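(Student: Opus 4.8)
The plan is to reduce the statement about the \ww{E\sp{r}}-term to the analysis already carried out in Section \ref{cssl}, where the \ww{d\sp{r}}-differential is shown to depend only on the Postnikov section \w{\Po{r-1}\Omega\sp{p}\Map\sb{X}(\hy,\xd)} and, more precisely, on the finite segments \w{H(n,m,\Sigma\sp{p}\hy)} of length \w{m\leq r-1} making up the collection \w[.]{\HH\sp{r}} The key point is that the \ww{E\sp{r}}-term is built as a subquotient of \w{\Ett{p}{q}} by applying the differentials \w{d\sp{2},\dotsc,d\sp{r-1}} successively; so one must check that all the data needed to run this inductive construction is recorded in the \wwb{r-2}stem \wref{eqrstem}.

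First I would recall the inductive description: \w{\Eus{r}=Z\sb{r}/B\sb{r}} where \w{Z\sb{r}=\Ker(d\sp{r-1}\rest{Z\sb{r-1}})} and \w{B\sb{r}=\Image(d\sp{r-1})+B\sb{r-1}} inside \w[,]{\Eus{r-1}} with \w[.]{\Eus{2}} being the homotopy of the normalized object. By the diagrammatic description of \S \ref{sdiagdesc}, each such \w{d\sp{s}} (for \w[)]{2\leq s\leq r-1} is computed from maps out of the segments \w{H(n,m,\Sigma\sp{p}\hy)} with \w[,]{m\leq s-1\leq r-2} and the relevant homotopy classes are detected already in the Postnikov stage \w[.]{\Po{m}\Map(H(n,m,\Sigma\sp{p}\hy),\tau\sp{\ast}\xd)} Thus, for every \w{s<r} the \ww{d\sp{s}}-differential — together with its source and target as subquotients — is a functor of the \wwb{r-2}stem. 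Iterating, \w{Z\sb{r}} and \w{B\sb{r}} are themselves determined by the \wwb{r-2}stem, hence so is \w[.]{\Eus{r}=Z\sb{r}/B\sb{r}}

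To make ``determined by'' precise I would phrase it as in Theorem A: produce a functor from the category whose objects are \wwb{r-2}stems (with the evident morphisms induced by maps of simplicial objects over the relevant segments) to graded abelian groups, whose value on the \wwb{r-2}stem of \w{\lra{\xd,\hy}} is naturally isomorphic to \w[.]{\Eus{r}(\xd,\hy)} This amounts to checking that the construction of \w{Z\sb{r}} and \w{B\sb{r}} sketched above is natural in the stem, which is routine once the diagrammatic description of the differentials is in place, since all the operations involved (kernels, images, subquotients, and the maps \w{d\sp{s}} themselves) are manifestly functorial.

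The main obstacle is bookkeeping: one has to confirm that the \emph{target} groups of the intermediate differentials \w[,]{d\sp{s}} not merely their sources, together with the identifications needed to view \w{d\sp{s+1}} as defined on \w[,]{\Ker d\sp{s}/\Image d\sp{s}} are all expressible through segments of length \w{\leq r-2} and Postnikov stages \w[.]{\leq r-2} Concretely, computing \w{d\sp{s}} on a class requires a nullhomotopy of \w{d\sp{s-1}} of that class, and chaining these nullhomotopies up to level \w{r-1} is exactly what the length-\w{(r-2)} segments \w{H(n,m,\Sigma\sp{p}\hy)} in \w{\HH\sp{r}} are designed to encode; so the content is to match the combinatorics of \w{\HH\sp{r}} with the zig-zags appearing in the definition of \w{Z\sb{r}} and \w[.]{B\sb{r}} I expect this matching to follow directly from the defining properties of \w{\HH\sp{r}} and \w{H(n,m,\Sigma\sp{p}\hy)} established in \S \ref{dzmnp}, with the Postnikov-truncation estimate supplied by the results of Section \ref{cssl}.
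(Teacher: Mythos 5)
You follow the same general route the paper does\wh reduce to the diagrammatic description of the differentials in \S \ref{sdiagdesc} and the Postnikov--truncation statement of Proposition \ref{rtrunc} (the paper's own proof is precisely a deduction from these plus Remark \ref{rerterm})\wh but the step you defer as ``routine bookkeeping'' is exactly the content of the theorem, and your indexing does not match the object the theorem is about. The \wwb{r-2}stem is, by Definition \ref{drstem}, the system of spaces $\Po{m}\Map(H(n,m,\Sigma\sp{p}\hy),\tau\sp{\ast}\xd)$ indexed by $\HH\sp{r-2}(\fy)$, in which the Postnikov stage is tied to the segment length (stage $m$ paired with $H(n,m,-)$) and the segments are strictly shorter than the diagrams \wref{diagnk} needed to test survival to $E\sp{r}$; your argument instead invokes ``segments of length $m\leq r-1$ making up $\HH\sp{r}$'' and later ``length-$(r-2)$ segments in $\HH\sp{r}$''\wh data which, in the paper's own bookkeeping (Corollary \ref{cstempr}), controls the $E\sp{r+2}$-term, not $E\sp{r}$. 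So even granting every step, what you sketch is that $E\sp{r}$ is determined by Postnikov stages $\leq r-2$ of maps out of segments of length $\leq r-2$; you never show that the more constrained system actually defining the \wwb{r-2}stem, together with its structure maps \wref{eqrestdiag} and \wref[,]{eqstems} suffices. That ``spiral'' matching is the real point\wh it is why the paper routes the argument through the chain-complex picture \wref[,]{eqgrid} the loop--suspension argument of \S \ref{spsss}, and the two distinct families of segments in \wref{eqhr} (Remark \ref{rdoublec})\wh and it is missing from your proposal.

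A second, smaller issue: you treat each $d\sp{s}$ as an honest map whose kernels and images are ``manifestly functorial'' in the stem. In this unstable setting the paper works with all $E\sp{1}$-representatives and all diagram extensions: $d\sp{s}(\gamma)=0$ means \emph{some} representative admits \emph{some} extension with nullhomotopic $\fd\sp{s}[f]$ (\S \ref{sssso}), and deciding which classes are hit by earlier differentials requires the separate family $H(n+m,m-1,\Sigma\sp{p-m+1}\hy)$ in \wref{eqhr} (Remarks \ref{rerterm} and \ref{rdoublec}). Your $Z\sb{r}/B\sb{r}$ induction is fine as a description of the spectral sequence itself, but the passage from the stem data to these choice-of-representative statements is where the argument has to do work, and your proposal asserts rather than establishes it.
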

\noindent See Theorem \ref{tstem} below\vsm.

We then define a pair of left and right Bousfield localizations on the product
\w{Z\sp{r}} of chain complex segments in \w{\Sa} corresponding to \w[,]{\HH\sp{r}}
yielding the $r$-th Postnikov localization \w[,]{\PPc\sp{r}:Z\sp{r}\to Z\sp{r}}
and deduce:
\begin{corb}\label{ncorb}
A \ww{\PPc\sp{r}}-equivalence in \w{Z\sp{r}} induces an
isomorphism of the \ww{E\sp{r+2}}-terms of the  associated
spectral sequences.
\end{corb}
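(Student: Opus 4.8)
The plan is to deduce Corollary~B from Theorem~A by identifying the localization $\PPc\sp{r}\colon Z\sp{r}\to Z\sp{r}$ with a model for the functor that extracts the $r$-stem: once that is done, a $\PPc\sp{r}$-equivalence between the objects of $Z\sp{r}$ attached to two simplicial objects induces a levelwise equivalence of their $r$-stems, and Theorem~A converts this into an isomorphism of the corresponding $E\sp{r+2}$-terms.

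First I would unwind the construction of $\PPc\sp{r}$ as the composite of a left Bousfield localization of $Z\sp{r}$ — imposing the levelwise Postnikov truncation $\Po{m}$ in the factor indexed by $H(n,m,\cdot)$, obtained by inverting the relevant maps as in Section~\ref{clbl} — and a right Bousfield localization (cellularization) with respect to the cells coming from $\HH\sp{r}$, as in Section~\ref{crbl}. The point to establish is that the $\PPc\sp{r}$-local objects of $Z\sp{r}$ are, up to levelwise equivalence, exactly the $r$-stems, and that $\PPc\sp{r}$ commutes up to natural equivalence with $\Map(-,\tau\sp{\ast}\xd)$ applied coordinatewise; together these give, for the object $W$ of $Z\sp{r}$ with coordinates $\Map(H(n,m,\Sigma\sp{p}\hy),\tau\sp{\ast}\xd)$, a natural weak equivalence $\PPc\sp{r}W\simeq\{\Po{m}\Map(H(n,m,\Sigma\sp{p}\hy),\tau\sp{\ast}\xd)\}\sb{H\in\HH\sp{r}}$, i.e.\ $\PPc\sp{r}W$ is the $r$-stem of $\lra{\xd,\hy}$.

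Next I would combine this with Theorem~A. If $f\colon W\to W'$ is a $\PPc\sp{r}$-equivalence, with $W$, $W'$ the objects attached to $\xd$, $\yd$ as above, then applying $\PPc\sp{r}$ yields a levelwise weak equivalence $\PPc\sp{r}W\xrightarrow{\simeq}\PPc\sp{r}W'$, that is, an equivalence between the $r$-stem of $\lra{\xd,\hy}$ and the $r$-stem of $\lra{\yd,\hy}$. Applying Theorem~A — in the precise form of Theorem~\ref{tstem}, with $r$ replaced by $r+2$, so that the $E\sp{r+2}$-term is recovered from the $\bigl((r+2)-2\bigr)$-stem, namely the $r$-stem — this term is a functor of the $r$-stem carrying levelwise equivalences to isomorphisms, and hence $f$ induces an isomorphism between the $E\sp{r+2}$-terms of the spectral sequences of $\lra{\xd,\hy}$ and $\lra{\yd,\hy}$; this is Corollary~B. (Strictly speaking one should first extract from the proof of Theorem~\ref{tstem} that $E\sp{r+2}$ is a genuine functor of the $r$-stem, not merely ``determined'' by it.)

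The step I expect to be the main obstacle is the identification of $\PPc\sp{r}$ with the $r$-stem functor in the second paragraph. One must check that the left Bousfield localization has, in the factor indexed by $H(n,m,\cdot)$, exactly the $\Po{m}$-truncated pointed spaces as its local objects; that the right Bousfield (cellularization) with respect to the cells of $\HH\sp{r}$ does not disturb $\pi\sb{\le m}$ in those factors; and that the composite $\PPc\sp{r}$ is homotopically well behaved and commutes, up to natural equivalence, with the coordinatewise mapping-space functor $\Map(-,\tau\sp{\ast}\xd)$. This interaction between the two Bousfield localizations, rather than either of them in isolation, is the technical heart of the argument.
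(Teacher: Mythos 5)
Your overall two-step architecture---show that a $\PPc\sp{r}$-equivalence forces an equivalence of $r$-stems, then quote Theorem~A (Theorem \ref{tstem}) with $r$ replaced by $r+2$---is the same as the paper's, which deduces the corollary from Proposition \ref{rtrunc} together with Theorem \ref{tstem}; your closing caveat about $E\sp{r+2}$ being an actual functor of the stem is fair and applies equally to the text.

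The gap is in the first half, exactly at the point you yourself defer as ``the main obstacle.'' First, the identification is mis-stated: an object of $Z\sp{r}$ is a tuple of \emph{truncated chain complexes} of pointed spaces---the $(m,p,n)$-windows, with component $\Po{m-1}\tnk{n}{n-m+1}\Cs\Map\sb{X}(\fy,\xd)\lra{p}$ in the factor indexed by $H(n,m,\Sigma\sp{p}\hy)$---not a tuple of mapping spaces $\Map(H(n,m,\Sigma\sp{p}\hy),\tau\sp{\ast}\xd)$; moreover the right Bousfield factor of $\PPc\sp{r}$ is the connected-cover colocalization $\R\sb{p}$ (cellularization at $\bS{p+1}$), not cellularization at the diagrams of $\HH\sp{r}$---the latter is the separate $\E\sp{r}$-localization on $\spX$ underlying Corollary~\textbf{C} (Corollary \ref{cerloc}). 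Consequently the central claim ``$\PPc\sp{r}W\simeq$ the $r$-stem of $\lra{\xd,\fy}$'' does not typecheck, and what is actually needed is a different statement: the stem spaces $\Po{m}\Map(H(n,m,\Sigma\sp{p}\hy),\tau\sp{\ast}\xd)$ arise, via the adjunctions \wref{eqadjcha} and \wref[,]{eqchsimp} by mapping the finite cofibrant complexes $\bDs$ (adjoint to the $H$-diagrams) into the windows, and one must prove that these truncated mapping spaces depend only on the $\PPc\sp{r}$-localized windows. That is precisely the content of Proposition \ref{rtrunc}: after the $p$-fold loop--suspension adjunction the desuspended source $\tDs$ has cells only in dimensions $\leq r-1$, so maps out of it factor through the Postnikov truncation of the ($p$-connected cover of the) window. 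You leave exactly this verification unproved, yet it is the entire mathematical content of Corollary~\textbf{B} beyond Theorem~A, so as written the proposal is incomplete at its crucial step.
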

\noindent See Corollary \ref{cstempr}\vsm.

In fact, the spectral sequence of \w{\xd} depends only on the underlying
restricted simplicial object in \w{\spX} (forgetting the degeneracies).
We can define the right Bousfield localization of \w{\spX} with respect to
\w[,]{\HH\sp{r}} and show:
\begin{corc}\label{ncorc}
The \ww{\HH\sp{r}}-equivalences in \w{\spX} induce \ww{E\sp{r}}-isomorphisms of
the associated spectral sequences.
\end{corc}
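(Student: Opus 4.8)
The plan is to deduce this from Theorem A (\ref{nthma}) together with the analysis of the Postnikov dependence of the differentials carried out in Section \ref{cssl}. First I would recall the precise meaning of an $\HH\sp{r}$-equivalence in $\spX$: by the construction of the right Bousfield localization of $\spX$ with respect to the set $\HH\sp{r}$ of finite simplicial segments $H(n,m,\Sigma\sp{p}\hy)$ of length $m\leq r-1$, a map $f:\xd\to\yd$ of restricted simplicial objects is an $\HH\sp{r}$-equivalence exactly when, for every $H(n,m,\Sigma\sp{p}\hy)\in\HH\sp{r}$, the induced map
\[
\Map(H(n,m,\Sigma\sp{p}\hy),\tau\sp{\ast}\xd)\ \lora\ \Map(H(n,m,\Sigma\sp{p}\hy),\tau\sp{\ast}\yd)
\]
is a weak equivalence. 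In particular it induces an isomorphism on $\Po{m}$ of these mapping spaces for all $m\leq r-1$, hence an isomorphism of the full systems (\ref{eqrstem}) defining the $(r-2)$-stem of $\lra{\xd,\hy}$ and $\lra{\yd,\hy}$. (The passage to the restricted object $\spX$ is harmless because, as noted in the paragraph preceding the statement, the spectral sequence of $\xd$ depends only on its underlying restricted simplicial object.)

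Next I would invoke Theorem A in the form of Theorem \ref{tstem}: since $f$ induces an isomorphism of $(r-2)$-stems, and the $E\sp{r}$-term of the spectral sequence associated to $\lra{\xd,\hy}$ is determined by the $(r-2)$-stem, it follows that $f$ induces an isomorphism $E\sp{r}(\xd)\cong E\sp{r}(\yd)$. The one point requiring care here is functoriality: Theorem A as stated asserts that the $E\sp{r}$-term is \emph{determined by} the $(r-2)$-stem, so I need the slightly stronger statement that a morphism of stems induces a morphism of $E\sp{r}$-terms, and that an isomorphism of stems induces an isomorphism. This should follow from the diagrammatic description of the differentials in \S\ref{sdiagdesc} and the dependence result of Section \ref{cssl}: the entire $E\sp{r}$-page, including its differential $d\sp{r-1}$ and hence its cohomology $E\sp{r}=H(E\sp{r-1},d\sp{r-1})$, is built by a natural construction out of the Postnikov sections $\Po{m}\Map(H(n,m,\Sigma\sp{p}\hy),\tau\sp{\ast}\xd)$ for $H(n,m,\cdot)\in\HH\sp{r}$, so a natural transformation of these inputs yields a natural transformation of outputs.

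The main obstacle I expect is precisely this bookkeeping of naturality: one must check that the construction producing $E\sp{r}$ from the stem is genuinely functorial in the stem, not merely that the isomorphism class of $E\sp{r}$ is pinned down. Concretely, this means tracing through the identification of $\HH\sp{r}$-localization of $\spX$ with the product $Z\sp{r}$ of chain-complex segments (and the localization $\PPc\sp{r}$ of Corollary B), and verifying that the map $\spX\to Z\sp{r}$ sending a restricted simplicial object to its system of mapping-space segments carries $\HH\sp{r}$-equivalences to the relevant equivalences in $Z\sp{r}$; then Corollary B (applied at the appropriate index, $E\sp{(r-2)+2}=E\sp{r}$) gives the $E\sp{r}$-isomorphism directly. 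Once functoriality is in place the argument is a short chain of implications, so after setting up the notation I would present it compactly, citing Theorem \ref{tstem} and Corollary \ref{cstempr} for the two substantive inputs.
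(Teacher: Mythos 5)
Your route is genuinely different from the paper's, and it is worth comparing the two. The paper's proof of Corollary \ref{cerloc} is a direct one and never passes through stems or Postnikov truncations: the \ww{\HH\sp{r}}-equivalences (more precisely the \ww{\E\sp{r}}-equivalences) are by construction the maps \w{f:\xd\to\yd} inducing equivalences on \w[,]{\Map(\LKE\sb{m,n}H(n,m,\Sigma\sp{p}\hy),-)} and since the left Kan extension along the fully faithful inclusion of the truncated diagram category is fully faithful, these mapping spaces are naturally identified with \w[;]{\Map(H(n,m,\Sigma\sp{p}\hy),\tau\sp{\ast}(-))} by \S \ref{sssso} and Remark \ref{rerterm} (see also Remark \ref{rerequivs}) the full \ww{E\sp{r}}-data \wh which classes survive and which are hit by earlier differentials \wh is read off, naturally in \w[,]{\xd} from components of exactly these untruncated mapping spaces, so equivalences on all of them give an \ww{E\sp{r}}-isomorphism at once. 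Your hypothesis (equivalences of the whole mapping spaces for every \w[)]{H\in\HH\sp{r}} is strictly stronger than what the stem machinery of Theorem \ref{tstem} and Corollary \ref{cstempr} requires, so your detour can be made to work, but it takes on two burdens the direct argument avoids.

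First, the indexing: you correctly note that Corollary \ref{cstempr} must be applied at level \w{r-2} to land on \w[,]{E\sp{r}} but the same shift applies to the stem itself \wh the \wwb{r-2}stem is indexed by \w[,]{\HH\sp{r-2}} not \w[,]{\HH\sp{r}} and your sentence ``hence an isomorphism of the full systems \wref{eqrstem} defining the $(r-2)$-stem'' conflates the two. An equivalence on maps out of a longer segment does not formally restrict to one on maps out of its truncation, so you must check that the segments indexing the \wwb{r-2}stem are already controlled by an \ww{\HH\sp{r}}-equivalence; this is essentially true because the first union in \wref{eqhr} contains segments of all shorter lengths, but it needs to be said. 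Second, the functoriality of ``determined by'' in Theorem \ref{tstem}, which you rightly flag as the main obstacle, is exactly the point the paper's argument renders moot: the identification of the \ww{E\sp{r}}-term with data extracted from \w{\Map(H,\tau\sp{\ast}\xd)} is manifestly natural, so no tracing through \w{G\sp{r}:\spX\to Z\sp{r}} and the Postnikov localization is needed. Once these two points are patched your proof is valid, but the intended argument is the shorter deduction from \S \ref{sssso}, Remark \ref{rerterm}, and the Kan-extension adjunction.
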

\noindent See Corollary \ref{cerloc}\vsm.

Section \ref{cssrfcs} is devoted to a detailed analysis of the
spectral sequence of a cosimplicial object \w{\xu} in a quasi-category $X$
(which was only sketched in \cite[\S 9]{BMeadS}), again providing a diagrammatic
description of the differentials (see \S \ref{sdiagdes}). This is used in
Section \ref{ccsloc} to describe the cosimplicial version of $n$-stems, the Postnikov
localization \w[,]{\PPc\sp{r}} and the right Bousfield localization \w[,]{R\sb{H}}
satisfying analogues of Theorem \textbf{A} and Corollaries \textbf{B} and \textbf{C}
above (namely, Theorem \ref{tcstem} and Corollaries  \ref{cctempr}
and \ref{thm8.6} below).

\begin{mysubsection}{Notation and conventions}
\label{snac}
The category of sets is denoted by \w[,]{\Set} and that of pointed sets by \w[.]{\Seta}
Similarly, \w{\Top} denotes the category of topological spaces, and \w{\Topa} that
of pointed spaces.

Let $\Delta$ denote the category of non-empty finite ordered sets and order-preserving
maps, so that a functor \w{F:\Dop\to\C} is a \emph{strict simplicial object}
in the category $\C$, and the category of such is denoted by \w[.]{\C\sp{\Dop}}
Similarly, a functor \w{G:\Delta\to\C} is a \emph{strict cosimplicial object},
and the category of such is denoted by \w[.]{\C\sp{\Delta}}
However, the category \w{\Set\sp{\Dop}} of simplicial sets, called \emph{spaces},
is denoted simply by $\Ss$, and that of pointed simplicial sets by
\w[.]{\Sa:=\Seta\sp{\Dop}}  We denote the category of small categories by \w[,]{\Cat}
with \w{B:\Cat\to\Ss} the ordinary \emph{nerve} functor.

The object \w{0<1\dotsc<n} in $\Delta$ is denoted by \w[,]{\bn}
and for functors \w{G:\Dop\to\C} or \w{H:\Delta\to\C} we write
\w{G\sb{n}} for \w{G(\bn)} and \w{H\sp{n}} for \w[.]{H(\bn)}
However, we shall use the notation \w{\lone} to refer to the
single arrow category \w{0\to 1} when it is necessary to
distinguish it from the corresponding object of $\Delta$.

We let \w{\rDel} denote the subcategory of $\Delta$ with the same objects but only
monic maps (thus representing \emph{restricted} (co)simplicial objects).
For \w[,]{m < n} we denote by \w{\Delta\sb{m, n}} (respectively, \w[)]{\rDell{m,n}}
the full subcategory of \w{\Delta} (respectively, \w[)]{\rDel} consisting of the objects
\w{\bk} with \w[.]{m \leq k\leq n} We abbreviate \w{\Delta\sb{0,n}} to \w{\Dell{n}}
and \w{\rDell{0,n}} to \w[.]{\rDele{n}} 

If $\C$ has the necessary (co)limits, the inclusion \w{i\sb{n}:\Dell{n}\hra\Delta}
induces \w{i\sb{n}\sp{\ast}:\C\sp{\Dop}\to\C\sp{\Dell{n}\op}} with
left adjoint \w[,]{i\sb{n}'} and the $n$-\emph{skeleton} functor is
\w[.]{\sk{n}=i\sb{n}'\circ i\sb{n}\sp{\ast}:\C\sp{\Dop}\to\C\sp{\Dop}}
Similarly, the $n$-\emph{coskeleton} functor
\w{\csk{n}=i\sb{n}''\circ i\sb{n}\sp{\ast}:\C\sp{\Dop}\to\C\sp{\Dop}} is
defined using the right adjoint \w{i\sb{n}''} of \w[.]{i\sb{n}\sp{\ast}}
Variants of these functors exist for \w[.]{j\sb{n}:\rDele{n}\hra\rDel}
Note that for \w[,]{\C=\Set} \w{\csk{n+1}A} is a model for the $n$-th Postnikov
section of a fibrant simplicial set $A$ (see \cite[VI, \S 3.4]{GJarS}).

The standard $n$-dimensional simplex in $\Ss$, represented by \w[,]{\bn} is denoted by
\w[,]{\Deln{n}} its boundary (obtained by omitting the non-degenerate simplex in
\w[)]{\sigma\sb{n}\in\Deln{n}\sb{n}} by \w[,]{\partial\Deln{n}} and the
\wwb{n,k}horn (obtained by further omitting \w[)]{d\sb{k}\sigma\sb{n}}
by \w{\Lambda\sb{k}\sp{n}} (see \cite[I, \S 1]{GJarS}).

A \emph{quasi-category} is a simplicial set $X$ in which, for each \w[,]{0<k< n}
all liftings of the form
\mydiagram[\label{eqqcat}]{
\Lambda\sb{k}\sp{n} \ar[r] \ar[d] & X \\
\Deln{n} \ar@{.>}[ur] &
 }
\noindent exist (see \cite{JoyQ,JoyQA}).

If $X$ is a quasi-category, we write \w{sX:= X\sp{B(\Dop)}} for category of
\emph{simplicial objects} in $X$, \w{\spX := X\sp{B(\rDop)}} for the category
of \emph{restricted simplicial objects} in $X$, and
\w[.]{\spnX{m, n}:=X\sp{B(\rDop\sb{m, n})}} The truncation
functors \w{sX\to sX\sb{n}} and \w{\spX\to\spnX{n}}
(corresponding to \w{\Dell{n}\hra\Delta} and \w[)]{\rDele{n}\hra\rDel}
will be denoted by \w[.]{\tnk{\ast}{n}}

Dually, we write \w{cX:=X\sp{B(\Del)}} for the category \emph{cosimplicial objects}
in $X$, \w{\cpX:= X\sp{B(\rDel)}} for that of
\emph{restricted cosimplicial} objects, and
\w[.]{\cpnX{m,n}:= X\sp{B(\rDell{m,n})}}

The category of simplicial categories (that is, those enriched in $\Ss$, which we
will usually indicate by $\mX$, $\mY$, and so on)
will be denoted by \w[,]{\sCat} and that of pointed simplicial categories
(enriched in \w[)]{\Sa} by \w[.]{\sCata}
In particular, we write \w{\mapa(x,y)} for the
standard function complex in \w{\Sa} or \w{\Topa} (see \cite[I, \S 1.5]{GJarS}).

When we have a simplicial model category with its associated simplicial enrichment, we
denote the former by $\bC$, say, and the latter by \w[.]{\mC} As for a quasi-category $X$,
we write \w[,]{\ho X} \w[,]{\ho\bC} or \w{\ho\mC} for the associated homotopy category.
\end{mysubsection}

\begin{remark}\label{rinfc}
The category of simplicial sets admits a model category structure
in which the fibrant objects are quasi-categories and the weak equivalences are
\emph{Joyal equivalences} (see \cite[\S 2.2]{LurieHTT} and \cite{JoyQ,JoyQA}).
Similarly, there is a model category structure on \w{\sCat} in which the fibrant
objects are categories enriched in Kan complexes, and the weak equivalences are
Dwyer-Kan equivalences (see \cite{BergM}).

All the definitions and results in this paper could be stated in any of the known
models of \wwb{\infty,1}categories (see, e.g., \cite{BergH}), and could in fact be
presented in a model-independent way, using To{\"{e}}n's axiomatic formulation
(see \cite[\S 4]{ToenA}), for example, as was done in \cite{BMeadS}.
However, in the interests of concreteness we restrict attention here to the above two
models, using when needed the Quillen equivalence
\begin{myeq}\label{eqquileq}
\fC\colon\sSet\leftrightarrows\sCat\colon\fB
\end{myeq}
\noindent between the Bergner and Joyal model categories
(see \cite[Theorem 2.2.5.1]{LurieHTT}).

The right adjoint $\fB$ is the \emph{homotopy coherent nerve}, while
we can think of \w{\fC(X)} as a strict model for the \wwb{\infty,1}category $X$,
described quite explicitly in \cite{DSpivR}.
\end{remark}

%
%
\sect{Quasi-Categories with a Class of Fibrations}
\label{cqccf}

In this section, we review Cisinski's notion of a quasi-category equipped with a
class of fibrations and weak equivalences, serving as the $\infty$-category version
of Brown's fibration categories (see \cite{KBrowA}, and compare \cite{BaueA}).
This material is largely taken from \cite{CisiH}.

\begin{defn}[\protect{\cite[Definition 7.4.6]{CisiH}}]
\label{def1.1}
Let $X$ be a quasi-category with a fixed terminal object $e$. A subcollection
\w{\Fib \subseteq X\sb{1}} is called a \emph{class of fibrations} if it satisfies the
following properties:
\begin{enumerate}
\renewcommand{\labelenumi}{(\arabic{enumi})~}
\item It contains all the identity maps and is closed under composition;
\item Pullbacks of fibrant objects (that is, objects such that the canonical map
  \w{x \to e} is in \w[)]{\Fib} exist;
\item The pullback of a fibration between fibrant objects by a map with fibrant source
  is a fibration.
\end{enumerate}
\end{defn}

\begin{defn}[\protect{\cite[Definition 7.4.12]{CisiH}}]
 \label{def1.2}
 A \emph{quasi-category with fibrations and weak equivalences} is a triple
 \w{(X, \W, \Fib)} consisting of a quasi-category $X$, a class of
  fibrations \w{\Fib \subseteq X\sb{1}} as above, and a subcategory of weak equivalences
\w{\W \subseteq X} satisfying the $2$-out-of-$3$ property, such that:
\begin{enumerate}
\renewcommand{\labelenumi}{(\arabic{enumi})~}
\item Given a pullback diagram
\mydiagram[\label{eqpbd}]{
x \ar[r]\sp{f'} \ar[d] & y \ar[d]\sp{g} \\
z \ar[r]\sb{f} & w
}
such that  the objects $y$, $z$, and $w$ are fibrant with $f$ is a weak equivalence and
a fibration, then the map \w{f'} is also a weak equivalence and a fibration.
\item Every map \w{f : x \to y} with fibrant codomain can be factored as a
  weak equivalence followed by a fibration.
\end{enumerate}
\end{defn}

\begin{example}\label{exam1.3}
If $X$ is a quasi-category with finite limits, then \w{(X, X,\J(X))} has the structure
of a quasi-category with weak equivalences and fibrations, where \w{\J(X)}
is the maximal Kan subcomplex of $X$.
\end{example}

\begin{example}\label{exam1.4}
Let $C$ be a category. If \w{(C,\Fib,\W)} is a category of fibrant objects in the sense
of \cite{KBrowA}, then \w{(B(C), B(\Fib), B(\W))} is a quasi-category with fibrations
and weak equivalences.
\end{example}

A category $I$ is called \emph{cycle-free} if there are no non-identity endomorphisms
in $I$. An object $i$ of a cycle-free category $I$ is said to be of \emph{finite length}
if there is an integer $n$ such that for each string of non-identity morphisms
\w[,]{i\sb{0} \xrightarrow{f\sb{0}} \cdots \xrightarrow{f\sb{m-1}} i\sb{m} = i}
necessarily \w[.]{m \le n} The smallest such $n$ is called the \emph{length} of $i$,
denoted by \w[.]{\ell(i)}
A \emph{directed category} is a cycle-free category $I$ in which each object has
finite length. Such a category is \emph{filtered} by the full subcategories \w{I\sp{(n)}}
consisting of objects of length at most $n$, and we set
\w[.]{\partial I\sp{(n)}:= I\sp{(n)}-\{ x:\ell(x) =n\}}
Given an object \w[,]{\xd\in X\sp{B(I\op)}} we write
$$
\partial x\sb{i} :=
\underset{j \in \partial(I/i)\sp{\ell(i)}}{\underset{ \longleftarrow }\lim} x\sb{j}~.
$$

Assume given a quasi-category $X$ equipped with a class of fibrations \w{\Fib} and
a directed category $I$. A map \w{f : \xd \to \yd}
in \w{X\sp{B(I\op)}} is a \emph{Reedy fibration} if for each \w[,]{i \in I} the map
\w{x\sb{i} \to y\sb{i} \times\sb{\partial y\sb{i}} \partial x\sb{i}}
is a fibration in $X$.

\begin{thm}[\protect{\cite[Theorem 7.4.20]{CisiH}}]
\label{thm1.5}
Let $X$ be a quasi-category with finite limits equipped with
classes \w{\Fib} of fibrations and $\W$ of weak equivalences, and let $I$ be a
finite directed category. The Reedy fibrations and levelwise weak equivalences
then give \w{X\sp{B(I\op)}} the structure of a category with weak equivalences
and fibrations.
\end{thm}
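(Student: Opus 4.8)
The plan is to verify that the triple $(X\sp{B(I\op)},\W\sb{\lev},\Fib\sb{\Reedy})$ satisfies Definitions \ref{def1.1} and \ref{def1.2}, where $\W\sb{\lev}$ is the class of levelwise weak equivalences and $\Fib\sb{\Reedy}$ the class of Reedy fibrations. I would first record the formal preliminaries: $X\sp{B(I\op)}$ is again a quasi-category; its finite limits exist and are computed levelwise, so that every matching object $\partial x\sb{i}$ exists (finiteness of $I$ being precisely what makes $\partial x\sb{i}$ a finite limit); the terminal object is the constant diagram $e$, with $\partial e\sb{i}=e$, so that $\xd$ is Reedy fibrant exactly when each matching map $x\sb{i}\to\partial x\sb{i}$ is a fibration in $X$; and any non-identity morphism $j\to i$ in $I$ forces $\ell(j)<\ell(i)$, since prepending it to a maximal chain ending at $j$ lengthens that chain. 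Hence two objects of equal length admit only identity morphisms between them, and $\partial x\sb{i}$ depends only on the $x\sb{j}$ with $\ell(j)<\ell(i)$, so that inductions on length proceed cleanly.

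The technical core, and the step I expect to be the main obstacle, is the following auxiliary assertion, to be established by a simultaneous induction on $\ell(i)$: (a) if $\xd$ is Reedy fibrant then $\partial x\sb{i}$ and $x\sb{i}$ are fibrant in $X$; (b) if $f\colon\xd\to\yd$ is a Reedy fibration between Reedy fibrant objects then the relative matching map $\partial x\sb{i}\to\partial y\sb{i}$ and the map $f\sb{i}$ are fibrations between fibrant objects in $X$; and (c) if, moreover, $f$ is a levelwise weak equivalence then both of these maps are weak equivalences, hence acyclic fibrations. In the inductive step one realizes $\partial x\sb{i}$ as an iterated pullback along the length filtration of the finite directed category $\partial(I/i)\sp{\ell(i)}$, all of whose objects have length $<\ell(i)$, at each stage performing a base change of a relative matching map furnished by the inductive hypothesis and invoking clauses (2)--(3) of Definition \ref{def1.1} and clause (1) of Definition \ref{def1.2} as they hold in $X$; the map $f\sb{i}$ is then treated by factoring it as $x\sb{i}\to y\sb{i}\times\sb{\partial y\sb{i}}\partial x\sb{i}\to y\sb{i}$, namely the Reedy matching map of $f$ at $i$ followed by a base change of $\partial x\sb{i}\to\partial y\sb{i}$, with the weak-equivalence clause coming from $2$-out-of-$3$. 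The genuinely delicate part is the bookkeeping, throughout these nested inductions, of which objects are already known to be fibrant and which maps already known to be (acyclic) fibrations.

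Granting the auxiliary assertion, the verification of Definition \ref{def1.1} is routine. Identity maps have isomorphisms for matching maps; for a composite of Reedy fibrations, commutation of limits exhibits the matching map at each $i$ as a matching fibration followed by a base change of a matching fibration, hence a fibration. For the pullback $\xd:=\yd\times\sb{\wwd}\zd$ of a Reedy fibration $f\colon\zd\to\wwd$ between Reedy fibrant objects along a map $\yd\to\wwd$ with $\yd$ Reedy fibrant, the same commutation identifies $\partial x\sb{i}$ with $\partial y\sb{i}\times\sb{\partial w\sb{i}}\partial z\sb{i}$ and the matching map of $\xd\to\yd$ at $i$ with a base change, by a map with fibrant source, of the matching map of $f$ at $i$; the latter being a fibration between fibrant objects by the auxiliary assertion, Definition \ref{def1.1}(3) in $X$ shows $\xd\to\yd$ is a Reedy fibration. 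This is clause (3), clause (2) being immediate since $X$ has finite limits. The levelwise weak equivalences plainly satisfy $2$-out-of-$3$.

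For Definition \ref{def1.2}(1): in the given pullback square with $\yd$, $\zd$, $\wwd$ Reedy fibrant and $f\colon\zd\to\wwd$ a weak equivalence and a Reedy fibration, clause (c) of the auxiliary assertion makes each $f\sb{i}$ an acyclic fibration between fibrant objects of $X$, so by Definition \ref{def1.2}(1) in $X$ each base change $f'\sb{i}$ is again an acyclic fibration; together with the base-change stability of Reedy fibrations established above, this makes $f'$ a weak equivalence and a Reedy fibration. For Definition \ref{def1.2}(2) --- factoring a map $f\colon\xd\to\yd$ with $\yd$ Reedy fibrant as a levelwise weak equivalence followed by a Reedy fibration --- I would proceed by induction on length, extending the factorization from $I\sp{(n-1)}$ to $I\sp{(n)}$. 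Since objects of length $n$ do not interact through matching objects, each $i$ with $\ell(i)=n$ can be handled separately: with the intermediate diagram $\zd$ already built over $I\sp{(n-1)}$, both $y\sb{i}$ and $y\sb{i}\times\sb{\partial y\sb{i}}\partial z\sb{i}$ are fibrant by the auxiliary assertion, so Definition \ref{def1.2}(2) in $X$ yields a factorization $x\sb{i}\xra{\sim}z\sb{i}\to y\sb{i}\times\sb{\partial y\sb{i}}\partial z\sb{i}$ --- a weak equivalence followed by a map satisfying the Reedy matching condition at $i$. These assemble into an extension of $\zd$ over $I\sp{(n)}$, and since $I$ has finite length the process terminates, producing the required factorization and completing the proof.
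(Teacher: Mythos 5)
The paper contains no proof of this statement: it is quoted verbatim from Cisinski (\cite[Theorem 7.4.20]{CisiH}), and the only place its content is touched is the proof of Corollary \ref{cor1.6}, which simply invokes Cisinski's Propositions 7.4.10, 7.4.18 and 7.4.19 --- exactly the three steps (Reedy fibrations form a class of fibrations; stability of levelwise-trivial Reedy fibrations under pullback; inductive construction of factorizations) that you reprove by hand. So your route is essentially the cited one, and the outline is sound: the simultaneous induction on $\ell(i)$ does close, because from the inductive hypothesis that $f\sb{j}$ and $\partial x\sb{j}\to\partial y\sb{j}$ are trivial fibrations for $\ell(j)<\ell(i)$ one recovers, by $2$-out-of-$3$ against the projection $y\sb{j}\times\sb{\partial y\sb{j}}\partial x\sb{j}\to y\sb{j}$, that the relative matching maps at such $j$ are trivial fibrations, which is what the iterated-pullback decomposition of $\partial x\sb{i}\to\partial y\sb{i}$ actually consumes. (A minor remark: for Definition \ref{def1.2}(1) only your clause (b) together with the levelwise hypothesis is needed; the acyclicity of $\partial x\sb{i}\to\partial y\sb{i}$ in clause (c) is true but never used.)

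The one point where the quasi-categorical setting genuinely bites, and which your proposal passes over with ``these assemble into an extension of $\zd$ over $I\sp{(n)}$'', is the assembly itself. In $X\sp{B(I\op)}$ one cannot define a diagram by adjoining objects $z\sb{i}$ and matching maps $z\sb{i}\to y\sb{i}\times\sb{\partial y\sb{i}}\partial z\sb{i}$ level by level as in an ordinary category; one needs the equivalence expressing diagrams indexed by $I\sp{(n)}$ in terms of diagrams indexed by $I\sp{(n-1)}$ together with relative matching data (and, similarly, the identification of the matching objects of a restricted diagram via $(I/i)/(j\to i)\simeq I/j$ when decomposing $\partial x\sb{i}$). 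This homotopy-coherent bookkeeping is precisely what Cisinski's Proposition 7.4.19 and the surrounding results of his \S 7.4 supply, and it is also what the paper leans on when extending the factorization argument to $\spX$ in Corollary \ref{cor1.6}. With that step properly justified, your argument is a faithful reconstruction of the cited proof; without it, the factorization step is the only genuine gap.
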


By using slightly stronger hypotheses, we obtain the following generalization:

\begin{corollary}\label{cor1.6}
  If $X$ as in Theorem \ref{thm1.5} has countable limits, then the Reedy fibrations
  and levelwise weak equivalences give \w{\spX} the
structure of a category with weak equivalences and fibrations.
\end{corollary}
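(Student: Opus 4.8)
The plan is to reduce the claim to Theorem~\ref{thm1.5} by writing \w{\rDel\op} as a filtered colimit of its finite truncations and checking that the three axioms of a category with weak equivalences and fibrations are all \emph{finitary} in nature, so they propagate to the colimit. First I would observe that \w{\rDel} is a directed category in the sense defined above, with \w[,]{\ell(\bn)=n} since the only morphisms are monic and hence there are no non-identity endomorphisms and every object has finite length. Consequently, for \w{\Xd\in\spX} and each \w[,]{\bn\in\rDel} the matching object \w{\partial x\sb{n}} is a limit over \w[,]{\partial(\rDel/\bn)\sp{n}} which is a \emph{finite} category (the face maps into \w{\bn} from \w{[\mathbf{n-1}]} form a finite poset). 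Thus each individual matching object, and the map \w[,]{x\sb{n}\to y\sb{n}\times\sb{\partial y\sb{n}}\partial x\sb{n}} only involves finite limits in $X$, which exist by hypothesis (indeed only countable limits are needed globally, to assemble the object \w{\Xd} itself and to form the product over all \w[);]{\bn} this is exactly why we strengthened ``finite limits'' to ``countable limits'' relative to Theorem~\ref{thm1.5}.

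Next I would set up the truncation functors \w{\tnk{\ast}{n}:\spX\to\spnX{0,n}=X\sp{B(\rDopl{n})}} and note that \w{\rDele{n}} is a \emph{finite} directed category, so Theorem~\ref{thm1.5} applies to give each \w{\spnX{0,n}} the structure of a category with weak equivalences and fibrations, with Reedy fibrations and levelwise weak equivalences. A map \w{f:\Xd\to\Yd} in \w{\spX} is declared a Reedy fibration (resp.\ weak equivalence) exactly when each \w{\tnk{\ast}{n}f} is one in \w[;]{\spnX{0,n}} equivalently, since the $n$-th matching object of \w{\Xd} in \w{\spX} agrees with that computed in \w{\spnX{0,n}} for any \w[,]{n} the defining conditions are levelwise/``finite-stagewise'' and stable under passing between \w{\spX} and its truncations. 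The class \w{\W} on \w{\spX} plainly satisfies $2$-out-of-$3$ because each \w{\W} on \w{\spnX{0,n}} does and the conditions are checked stagewise. Similarly the fibration axioms (1)--(3) of Definition~\ref{def1.1}: identities are Reedy fibrations, composites of Reedy fibrations are Reedy fibrations (this is a diagram chase with the matching maps, identical to the classical Reedy argument and already implicit in Theorem~\ref{thm1.5}), and the pullback axioms follow because a pullback in \w{\spX} is computed levelwise, hence compatibly with truncation, so the required pullbacks exist and Reedy fibrations between fibrant objects pull back to Reedy fibrations by applying the corresponding statement in each \w[.]{\spnX{0,n}} Likewise axiom~(1) of Definition~\ref{def1.2} (stability of acyclic Reedy fibrations under pullback) is inherited stagewise.

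The one axiom that does not come for free by truncation is the factorization axiom~(2) of Definition~\ref{def1.2}: given \w{f:\Xd\to\Yd} with \w{\Yd} Reedy fibrant, one must factor \w{f} as a levelwise weak equivalence followed by a Reedy fibration \emph{in \w{\spX} all at once}, not just in each finite stage. The standard move is to build the factorization by induction on the skeletal filtration: having factored \w{\tnk{\ast}{n-1}f} as \w[,]{\tnk{\ast}{n-1}\Xd\xra{\sim}\bZ\sb{\leq n-1}\to\tnk{\ast}{n-1}\Yd} one extends to level \w{n} by factoring the canonical map \w{x\sb{n}\to y\sb{n}\times\sb{\partial y\sb{n}}\partial z\sb{n-1}} (a map with Reedy-fibrant-type target, built from finite limits) as a weak equivalence followed by a fibration in \w[,]{X} using Definition~\ref{def1.2}(2) for \w{X} itself, and then reassembling. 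Carrying this out requires knowing that the resulting countable inductive system of objects has a limit realizing \w{\bZd} in \w[,]{\spX} which is again where countable limits in \w{X} are used, and that a sequential limit of Reedy fibrations is a Reedy fibration and a sequential limit of maps that are levelwise equivalences and fibrations is a levelwise equivalence --- the former is formal, the latter needs axiom~(1) of Definition~\ref{def1.2} together with the $2$-out-of-$3$ property applied along the tower. I expect this inductive construction of the global factorization, and the verification that it behaves well under the countable limit, to be the main technical obstacle; everything else is a stagewise bootstrap off Theorem~\ref{thm1.5}.
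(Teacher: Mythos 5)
Your proposal follows essentially the same route as the paper: the paper likewise reduces to the finite truncations, produces compatible stagewise factorizations of $i_n^{\ast}(f)$ (citing Cisinski's Proposition 7.4.19 for precisely the inductive matching-object factorization you sketch), and assembles the global factorization as a limit of right Kan extensions along the truncation inclusions, which is exactly where the countable-limit hypothesis is used. The only cosmetic differences are that the paper cites Cisinski's Propositions 7.4.10 and 7.4.18 directly for the class-of-fibrations and pullback-stability axioms rather than bootstrapping them stagewise from Theorem 1.5, and that the compatibility of the stagewise factorizations makes the limiting tower levelwise eventually given by equivalences, so the extra lemma you anticipate about sequential limits of levelwise equivalences is not actually needed.
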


\begin{proof}
The fact that the Reedy fibrations form a class of fibrations follows from
\cite[Proposition 7.4.10]{CisiH}. Similarly, condition (1) of \ref{def1.2} is
\cite[Proposition 7.4.18]{CisiH}. It remains to verify is the existence of factorizations:

For each \w[,]{n \in \NN} there is an adjunction
$$
i\sb{n}\sp{\ast}\colon\spnX{0, n}\leftrightarrows \spX\colon(i\sb{n})\sb{\ast}
$$
where \w{i\sb{n}\sp{\ast}} is the restriction of presheaves and \w{(i\sb{n})\sb{\ast}}
is the right Kan extension (which exists by \cite[Proposition 6.4.9]{CisiH},
since $X$ has countable limits).

Let \w{f:x\to y} be a map with (Reedy) fibrant codomain. Using the construction
of \cite[Proposition 7.4.19]{CisiH}, we can produce compatible factorizations of
\w{i\sb{n}\sp{\ast}(f)} as a levelwise weak equivalence and a Reedy fibration, denoted by
\w[.]{g\sb{n}\circ h\sb{n}} Then
$$
\lim((i\sb{n})\sb{\ast}(g\sb{n})) \circ \lim((i\sb{n})\sb{\ast}(h\sb{n}))
$$
gives the required factorization.
\end{proof}

\begin{remark}\label{rmk1.7}
Theorem \ref{thm1.5} can be further generalized to the case where $I$ is an arbitrary
directed category by an inductive argument on the length of objects, using the argument of
Corollary \ref{cor1.6}, as long as $X$ has enough limits to guarantee existence of
the right Kan extension (using \cite[Proposition 6.4.9]{CisiH}).
\end{remark}

\begin{prop}\label{pinduce}
Suppose given a quasi-category $Y$, \w{(X,\Fib,\W)} as in Theorem \ref{thm1.5}, and 
a functor of quasi-categories \w{F:Y\to X} such that:
\begin{enumerate}
\renewcommand{\labelenumi}{(\alph{enumi})~}
\item $F$ is essentially surjective;
\item \w{\ho F} is full;
\item $F$ preserves pullbacks and \w[,]{F(e')=e} for \w{e\in X} and \w{e'\in Y}   
terminal objects.
\end{enumerate}
Then \w{(Y,F\sp{-1}(\Fib),F\sp{-1}(\W))} has the structure of a quasi-category with
fibrations and weak equivalences.
\end{prop}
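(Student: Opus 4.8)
The plan is to transport the structure along $F$ by verifying, one axiom at a time, the conditions of Definitions~\ref{def1.1} and~\ref{def1.2} for the triple $(Y, F^{-1}(\Fib), F^{-1}(\W))$. The formal parts are immediate: since $F$ preserves identities and carries $2$-simplices to $2$-simplices, $F^{-1}(\Fib)$ contains all identities and is closed under composition, giving Definition~\ref{def1.1}(1); and $F^{-1}(\W)$ is a subcategory of $Y$ satisfying $2$-out-of-$3$ because $\W$ does in $X$ and $F$ is functorial. Moreover, since $e'$ and $e$ are terminal with $F(e') = e$, the canonical map $y \to e'$ is sent by $F$ to the canonical map $Fy \to e$, so an object $y \in Y$ is fibrant for $F^{-1}(\Fib)$ exactly when $Fy$ is fibrant in $X$; I use this identification freely.

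For the pullback axioms --- Definition~\ref{def1.1}(2)--(3) and Definition~\ref{def1.2}(1) --- I start from a cospan $y_1 \to y_0 \leftarrow y_2$ in $Y$ with the relevant fibrancy, and in cases~(3) and~\ref{def1.2}(1) with $y_1 \to y_0$ lying in $F^{-1}(\Fib)$ (resp.\ also in $F^{-1}(\W)$). Applying $F$ gives a cospan in $X$ with the corresponding properties, so by Theorem~\ref{thm1.5} (or Example~\ref{exam1.3}) it admits a limiting square $P$ in $X$ whose distinguished leg is a (trivial) fibration. Using essential surjectivity~(a), I lift the pullback vertex to $\tilde P \in Y$ together with an equivalence $F\tilde P \simeq P$; using fullness of $\ho F$~(b), I lift the two projections (composed with that equivalence) to maps out of $\tilde P$ in $Y$, and assemble them into a square in $Y$ over the given cospan. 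Once this square is known to be a limit cone, the fact that its distinguished leg lies in $F^{-1}(\Fib)$ (resp.\ $F^{-1}(\W)$) is automatic, since $F$ carries the square to the limiting square in $X$ and membership in $F^{-1}(\Fib)$ and $F^{-1}(\W)$ is detected by $F$.

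For the factorization axiom, Definition~\ref{def1.2}(2), let $f : x \to y$ in $Y$ with $y$ fibrant. Then $Ff$ has fibrant codomain, so it factors in $X$ as $Fx \xrightarrow{w} Z \xrightarrow{p} Fy$ with $w \in \W$ and $p \in \Fib$. I lift $Z$ to $\tilde Z \in Y$ with an equivalence $F\tilde Z \simeq Z$ by~(a), and lift $w$ and $p$ (conjugated by that equivalence) to $\tilde w : x \to \tilde Z$ and $\tilde p : \tilde Z \to y$ by~(b); recalling that equivalences between fibrant objects are both weak equivalences and fibrations, $2$-out-of-$3$ for $F^{-1}(\W)$ and closure of $F^{-1}(\Fib)$ under composition give $\tilde w \in F^{-1}(\W)$ and $\tilde p \in F^{-1}(\Fib)$. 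It then remains to see that $\tilde p \circ \tilde w$ is a factorization of $f$ itself.

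The main obstacle is exactly the coherence left implicit in the last two paragraphs: one must check that the square constructed in $Y$ is genuinely a limit cone, and not merely a square whose $F$-image is one, and that $\tilde p \circ \tilde w$ is genuinely homotopic to $f$ in $Y$, and not only after applying $F$. These are the points where hypotheses~(a), (b) and preservation of pullbacks~(c) must be used in concert --- comparing the relevant mapping spaces of $Y$ and $X$ on the diagrams at hand and lifting the witnessing higher simplices from $X$ to $Y$ --- and everything else in the argument is formal.
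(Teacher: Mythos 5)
Your proposal diverges from the paper precisely at the two places where real content is needed, and in both cases the step you defer cannot be carried out from hypotheses (a) and (b). For the pullback axioms (Definition \ref{def1.1}(2)--(3) and Definition \ref{def1.2}(1)), the paper does not manufacture pullbacks in $Y$ by lifting them from $X$: it reads hypothesis (c) as saying that the relevant pullbacks in $Y$ exist and are carried by $F$ to pullbacks in $X$ (in the intended application, Remark \ref{rtruncations}, $F$ is a right adjoint between complete quasi-categories, so this is automatic), and then membership of the pulled-back leg in $F^{-1}(\Fib)$, resp.\ $F^{-1}(\W)$, is read off directly from the corresponding axiom for $(X,\Fib,\W)$. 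Your plan --- form the pullback $P$ in $X$, lift the vertex by (a), lift the projections by (b), and then show the resulting square in $Y$ is a limit cone --- founders exactly at the point you yourself flag as ``the main obstacle'': essential surjectivity and fullness of $\ho F$ only give surjectivity on objects up to equivalence and on homotopy classes of $1$-morphisms; they provide no comparison of mapping spaces and no mechanism for ``lifting the witnessing higher simplices,'' and a square in $Y$ whose $F$-image is a pullback need not be a pullback (nothing resembling faithfulness or conservativity is assumed). So this is not a routine coherence check left for later; under the stated hypotheses it is not provable, and the argument has to be organized as in the paper.

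For the factorization axiom (Definition \ref{def1.2}(2)) your outline follows the paper's (factor $F(f)$ in $X$, lift the intermediate object by (a), lift the two maps by (b)), but the step placing $\tilde p$ in $F^{-1}(\Fib)$ is wrong: in a quasi-category with fibrations and weak equivalences an equivalence between fibrant objects need \emph{not} be a fibration, so ``fibration composed with an equivalence'' is not covered by closure under composition. The paper's actual maneuver is different: in diagram \eqref{eqtwosquare} the bottom square has both horizontal edges invertible (the equivalence $w$ and $\Id$), hence is automatically a pullback, so $F(b')$ is exhibited as a pullback of the fibration $b$ along a map with fibrant source and Definition \ref{def1.1}(3) applies. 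This is the key point your argument is missing. You are right that one must still compare $\tilde p\circ\tilde w$ with $f$ itself (the paper is terse here too, simply asserting that $b'\circ a'$ is the required factorization), but your proposed remedy --- lifting higher simplices from $X$ to $Y$ --- is again unavailable from (a) and (b), so it cannot be dismissed as formal bookkeeping.
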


\begin{proof}
By the $2$-out-of-$3$ property, \w{F\sp{-1}(\W)} contains all identity maps and is
closed under composition. Since $F$ preserves pullbacks, it takes pullbacks of fibrations
to pullbacks of fibrations, and thus satisfies (2) and (3) of Definition \ref{def1.1}.
$F$ also preserves diagrams of the shape found in Definition \ref{def1.1}(1), and
thus satisfies that requirement, too. Finally, to verify Definition \ref{def1.2}(2),
let \w{f:c\to d} be a map in $Y$ with \w{F(d)} fibrant, and factor \w{F(f)} as a weak
equivalence followed by a fibration:
$$
F(c) \xrightarrow{a} d' \xrightarrow{b} F(d)~.
$$
\noindent By hypotheses (a) and (b), we can find a diagram
\mydiagram[\label{eqtwosquare}]{
F(c) \ar[r]\sb{\Id} \ar[d]_{F(a')} & F(c) \ar[d]_{a} \\
F(d'') \ar[r]\sb{w}  \ar[d]_{F(b')} & d' \ar[d]_{b} \\
F(d) \ar[r]\sb{\Id} & F(d)~,
}
in which the map \w{F(b')} is an equivalence and the bottom square is a pullback
(since both its horizontal morphisms are equivalences). Thus \w{F(b')} is a fibration,
and \w{b' \circ a'} gives the required factorization of $f$.
\end{proof}

%
%
\sect{Left Bousfield Localization}
\label{clbl}

In this section, we review the theory of localizations of $\infty$-categories
from \cite{CisiH}. Given a locally presentable quasi-category $X$, and a set
of maps $K$ in $X$, we construct the so-called $K$-equivalence structure,
in which the weak equivalences are $K$-equivalences.

\begin{defn}\label{dlocal}
If $\W$ is a subcategory of a quasi-category $X$ satisfying
the $2$-out-of-$3$ property, the \emph{localization} of $X$ at $\W$ is an object
\w{\LWX} corepresenting the functor
$$
(-)\sp{X} \times\sb{(-)\sp{W}} \times\J(-)\sp{W}
$$
\noindent (see \S \ref{exam1.3}).
Any functor of the form \w{\LWX} is called a \emph{localization functor}.
The image of \w{\id\sb{\LWX}} under 
$$
\LWX)\sp{\LWX}\simeq(\LWX)\sp{X}\times\sb{(\LWX)\sp{\W}}\times\J(\LWX)\sp{\W}\to
(\LWX)\sp{X}
$$
is the \emph{localization map} \w[.]{X \to \LWX}
This has an evident universal property among all maps of quasi-categories
\w{X\to Y} which takes maps in $\W$ to equivalences.
\end{defn}

\begin{thm}[\protect{\cite[Proposition 7.1.4]{CisiH}}]
\w{\LWX} exists for all choices of $X$ and $\W$.
\end{thm}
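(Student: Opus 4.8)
The statement is that the localization $\LWX$ exists for every quasi-category $X$ with a chosen subcategory $\W$ satisfying 2-out-of-3, and it is quoted from \cite[Proposition 7.1.4]{CisiH}, so the natural approach is to reduce to that reference after unwinding the corepresentability in Definition \ref{dlocal}. The plan is first to reformulate the defining universal property: an object $\LWX$ together with a map $X\to\LWX$ corepresenting the functor $Y\mapsto (Y)^{X}\times_{(Y)^{\W}}\J(Y)^{\W}$ is precisely an object through which every functor $X\to Y$ sending $\W$ into equivalences (i.e. into $\J(Y)$) factors, uniquely up to the appropriate higher coherence. So the task is to produce such a universal factorization.

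Next I would construct the candidate directly. Passing through the Quillen equivalence \eqref{eqquileq} (or, more simply, working in the Joyal model structure on $\sSet$), one can model the localization as a pushout-type construction: take a fibrant replacement of the simplicial set obtained from $X$ by freely inverting the $1$-simplices in $\W$ — concretely, form the pushout of $X$ along the family of inclusions $\Delta^{1}\hookrightarrow J$ (the nerve of the free-living isomorphism) indexed by the arrows in $\W$, and then apply a fibrant replacement in the Joyal model structure. Because $\W$ is only assumed closed under 2-out-of-3 and not necessarily small, one works with the whole subcategory at once rather than a generating set; this is harmless since the construction is a (possibly large) colimit followed by a fibrant replacement, and the universal property is checked against arbitrary quasi-categories $Y$.

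Then I would verify the universal property. A functor $X\to Y$ with $Y$ a quasi-category carries $\W$ into $\J(Y)$ if and only if it extends along each attached copy of $J$, hence along the pushout, and since $Y$ is fibrant it extends along the fibrant replacement; conversely any functor out of $\LWX$ restricts to one on $X$ that kills $\W$. Unwinding this equivalence of mapping spaces — not just sets of components — gives exactly the corepresentability of $Y\mapsto (Y)^{X}\times_{(Y)^{\W}}\J(Y)^{\W}$ demanded in Definition \ref{dlocal}, and identifies the localization map as the image of the identity as described there.

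The main obstacle is bookkeeping rather than conceptual: one must be careful that $\W$ need not be a set, so the "freely invert every arrow of $\W$" step is a large colimit, and one must check that fibrant replacement in the Joyal model structure still applies (it does, as that model structure is combinatorial, but the colimit may be large — this is the point where either one restricts to a small generating subcategory and argues it suffices, or one invokes a universe enlargement as in \cite{CisiH}). Verifying that the resulting $\LWX$ corepresents the precise functor written in Definition \ref{dlocal}, with the fibre product over $(Y)^{\W}$ and the factor $\J(Y)^{\W}$, rather than merely some homotopy-equivalent variant, is the step that requires the most care; here I would lean directly on \cite[Proposition 7.1.4]{CisiH}, whose proof carries out exactly this matching.
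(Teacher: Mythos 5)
The paper offers no independent proof here---the statement is simply quoted from Cisinski---and your sketch (glue a copy of $J$ along each arrow of $\W$, take a Joyal fibrant replacement, and check the universal property at the level of mapping spaces) is essentially the construction carried out in that reference, up to the cosmetic difference of inverting $\W$ arrow-by-arrow rather than replacing the whole simplicial subset $\W$ by a Kan complex at once. Your size worry is vacuous, though harmless: $\W$ is a simplicial subset of $X$, so its arrows form a subset of $X_{1}$ and the pushout is a small colimit whenever $X$ itself is small.
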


\begin{example}\label{exam2.2}
Suppose that \w{\mW\subseteq\mX} is an inclusion of (fibrant) simplicial
categories, with underlying $1$-categories \w[.]{W\subseteq X} Then by
\cite[Proposition 1.2.1]{HiniDK}, we have an equivalence of quasi-categories
$$
\LLc\sb{\fB(\mW)}(\fB(\mX))~\simeq~\fB\LH(X,W)~,
$$
\noindent where \w{\LH(X,W)} is the fibrant replacement in the Bergner structure
of the hammock localization in the sense of Dwyer and Kan (\cite{DKanL}).

In particular, suppose that $C$ is the underlying category of a simplicial model
category $\bC$, with underlying simplicial category $\mC$ and underlying category
of weak equivalences \w[.]{W\subseteq C} Then we have a weak equivalence
$$
\fB(\mC) \simeq \fB(\LH(C,W)) \simeq \LLc\sb{BW}(BC)
$$
\noindent by the preceding paragraph and \cite[Proposition 4.8]{DKanF}. In particular,
we can interpret this as saying that \w{\LLc\sb{BW}(BC)} \emph{presents}
the model category $\bC$.
\end{example}

\begin{defn}\label{def2.3}
A \emph{left Bousfield localization} of a quasi-category $X$ is a localization
functor \w{X \to Y} with a fully faithful right adjoint. We call a left Bousfield
localization \emph{left exact} if it preserves finite limits.
Dually, a \emph{right Bousfield localization} of a quasi-category $X$ is a localization
functor \w{X \to Y} with a fully faithful left adjoint.
\end{defn}

\begin{remark}\label{rlrbl}
By \cite[Proposition 5.2.7.6]{LurieHTT}, left Bousfield localizations of simplicial
model categories give rise to left Bousfield localizations of quasi-categories.
\end{remark}

\begin{defn}\label{con2.5}
Suppose that $K$ is a small class of maps. Then the \emph{left Bousfield localization}
of $X$ at $K$, \w[,]{X\to\LK\sp{\cocon}(X)} is the map universal among
cocontinuous maps that take elements of $K$ to equivalences.

We will call any map whose image under left Bousfield localization
\w{X\to\LK\sp{\cocon}(X)} is an equivalence a
\emph{$K$-equivalence}.
\end{defn}

Throughout the rest of the section, we will fix a quasi-category with weak equivalences
and fibrations \w[,]{(X,\Fib,\W)} a small collection of maps $K$,
and assume the following:

\begin{assume}\label{ass3.1}
\begin{enumerate}
\renewcommand{\labelenumi}{(\arabic{enumi})~}
\item The images of the domain and codomain of the maps $K$
under the localization map are compact and connected.
\item $X$ and \w{\LWX} are locally presentable, and the localization map
\w{X \to \LWX} is accessible.
\item $\W$ is \emph{saturated}: that is,  a morphism of $X$ is in $\W$
  if and only if its image under the localization map is invertible.
\end{enumerate}
\end{assume}

\begin{example}\label{examxxx}
Suppose that $\bC$ is an \emph{excellent} simplicial model category (in the sense of
\cite[Definition A.3.2.16]{LurieHTT}) with underlying category $C$, subcategory of
weak equivalences $\W$, and underlying simplicial category $\mC$. Many known
examples of model categories (the Kan model structure on simplicial sets,
the Jardine model structure on simplicial presheaves, and so on) are
excellent. We claim that the structure \w{(B(C\sp{f}), \Fib, B(\W\sp{f}))} given by
Example \ref{exam1.4} automatically satisfies Assumptions \ref{ass3.1}(2)-(3).
Note that Assumption \ref{ass3.1}(1) involves choosing a collection of maps between
`homotopy compact and connected objects' such that simplicial \w{\hom} commutes
with filtered (homotopy) colimits and coproducts.

By the discussion of Example \ref{exam2.2}, we can identify the localization map
\w{B(C\sp{f}) \to \LLc\sb{B(\W\sp{f})}(B(C\sp{f}))} with homotopy coherent nerve of
the inclusion \w{C\sp{f} = C\sp{\circ} \to \mC\sp{\circ}} of $C$ as a discrete
simplicial category; note that by \cite[Remark A.3.2.17]{LurieHTT}, every object
in an excellent model category is cofibrant. The subcategory $\W$ is saturated,
since a map in \w{C\sp{f}} is a weak equivalence if and only if it represents
an equivalence in \w[.]{\fB(\mC\sp{\circ})}

The quasi-category \w{B(C\sp{f})} presents the trivial model structure on
\w[,]{C\sp{f}} so Assumption (2) is equivalent to showing that for some sufficiently
large regular cardinal $\lambda$, $\lambda$-filtered colimits are homotopy colimits.
But this is true for all combinatorial model categories
(see \cite[Proposition 7.3]{DuggC}, and thus all excellent model categories.
\end{example}

\begin{defn}\label{def3.3}
An object \w{x \in X} is \emph{$K$-local} if it is in the essential image of the
right adjoint of the localization map \w[.]{\LWX\to\LK\sp{\cocon}(X)}
\end{defn}

\begin{remark}\label{rmk3.4}
By definition $x$ is \ww{K}-fibrant if it is \ww{K}-local and fibrant.

As in the model category case (see \cite{BarwL}), the $K$-local objects are the
objects $z$ such that the $K$-equivalences \w{f:x\to y} induce bijections
\w[.]{\Map\sb{\LWX}(z,x)\to \Map\sb{\LWX}(z,y)}
\end{remark}

\begin{lemma}\label{lemexact}
The map \w{\LWX\to\LK\sp{\cocon}(X)} is left exact (i.e., preserves pullbacks).
\end{lemma}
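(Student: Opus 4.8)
The plan is to deduce the left exactness of the localization map $\LWX\to\LK\sp{\cocon}(X)$ from general facts about accessible left exact localizations of locally presentable $(\infty,1)$-categories, exploiting the compactness/connectedness hypothesis in Assumption \ref{ass3.1}(1). First I would recall (from \cite[\S 7.1]{CisiH} or \cite[\S 5.5.4]{LurieHTT}) that, since $\LWX$ is locally presentable and the set of maps $K$ is small, the localization $\LWX\to\LK\sp{\cocon}(X)$ is an accessible localization, so its essential image — the $K$-local objects of Definition \ref{def3.3} — is an accessible localization of $\LWX$ in the sense of Lurie. By \cite[Proposition 5.5.4.15]{LurieHTT}, such a localization is left exact precisely when the class of $K$-local equivalences is stable under base change. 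So the plan reduces to verifying that stability.

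The key step is then to show that pullbacks of $K$-equivalences along arbitrary maps in $\LWX$ are again $K$-equivalences. Here is where Assumption \ref{ass3.1}(1) does the work: the domains and codomains of the maps in $K$ become compact and connected in $\LWX$. The standard argument (as in the model-categorical treatments of left exact localizations, e.g. \cite{RezkToposa} or \cite[\S 6.2.1]{LurieHTT}) is that localizing at a set of maps between compact connected objects is the same as localizing at a set of maps between compact objects of the form $\emptyset \to ?$ or, more relevantly, that the resulting modality is \emph{topological}: the $K$-local objects are exactly those orthogonal to a set of monomorphisms, up to the reflection, and the class of $K$-connected maps is closed under base change. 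Concretely, I would argue that a map $f$ is a $K$-equivalence iff it is an iterated pushout-composition of maps in $K$ (this is where cocontinuity in Definition \ref{con2.5} and the generation-by-$K$ description of the localization enter), and then check that the relevant cell structures are preserved by pullback using that the cells are compact and connected, so that mapping out of them commutes with the relevant colimits and the Beck–Chevalley/descent properties of the locally presentable $X$.

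A cleaner route, which I would actually take, is to invoke the universal property directly: $\LK\sp{\cocon}(-)$ applied to $\LWX$ is the localization at $K$ in the $(\infty,1)$-topos-theoretic sense, and because $\LWX$ is locally presentable with $K$ a set of maps between compact \emph{connected} objects, the resulting localization is \emph{left exact} by the characterization of left exact localizations of presentable $(\infty,1)$-categories generated by such maps — this is essentially \cite[Proposition 6.2.1.1]{LurieHTT} combined with the observation that connectedness of the generating cells forces the associated factorization system to restrict correctly under pullback. One then observes that left exactness of the composite $X\to\LWX\to\LK\sp{\cocon}(X)$ together with left exactness already need not give left exactness of the second factor; rather, one argues that the right adjoint inclusion $\LK\sp{\cocon}(X)\hookrightarrow\LWX$ detects pullbacks (being fully faithful) and that the reflector preserves them, which is exactly the content of the left exact localization statement.

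\textbf{Main obstacle.} The delicate point is establishing that the $K$-local equivalences are stable under base change. Without connectedness of the generating objects this is simply false (an arbitrary accessible localization of a presentable $(\infty,1)$-category need not be left exact), so the whole weight of the argument rests on extracting from Assumption \ref{ass3.1}(1) the correct statement — that the localization is generated by maps with compact connected source and target, hence is a left exact (indeed topological-type) localization. I expect the bulk of the work to be in citing or reproving the precise form of \cite[Proposition 6.2.1.1]{LurieHTT} (or its analogue in \cite{CisiH}) that says: a localization of a presentable $(\infty,1)$-category at a \emph{set} of morphisms between \emph{compact} objects, all of whose sources and targets are \emph{connected}, is left exact; everything else is bookkeeping with adjunctions and the fully faithful inclusion of $K$-local objects.
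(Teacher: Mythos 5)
Your reduction of the statement to pullback-stability of the $K$-equivalences (via \cite[Proposition 6.2.1.1]{LurieHTT}) is exactly the paper's first step, so the two arguments agree up to that point. The gap is in how you then establish that stability. The principle you ultimately lean on \wh that localizing a presentable $(\infty,1)$-category at a set of maps whose sources and targets are compact and connected is automatically left exact \wh is false, and \cite[Proposition 6.2.1.1]{LurieHTT} contains no such sufficient criterion: taking $K=\{S\sp{n+1}\to \ast\}$ in the $\infty$-category of spaces, both objects are compact and connected, yet the resulting cocontinuous localization is the $n$-th Postnikov truncation, which does not preserve pullbacks (the pullback of $\ast\to K(G,1)\leftarrow \ast$ is the discrete space $G$, while the truncated cospan has pullback a point). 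So no argument can run on compactness and connectedness of the generating cells alone. Your ``concrete'' fallback is also flawed on both counts: a $K$-equivalence need not be a transfinite composite of pushouts of maps in $K$ (the strongly saturated class generated by $K$ is closed under two-out-of-three, retracts and colimits, so it is strictly larger than the class of relative $K$-cell maps), and even for genuine cell maps pullbacks do not preserve cell structures, so there is no Beck--Chevalley/descent step available in this generality.

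The paper's proof takes a different, essentially pointwise, route at exactly this juncture: having reduced to pullback-stability, it argues that the $K$-equivalences are detected by the functors $\Map\sb{\LWX}(h,-)$ for the relevant compact connected objects $h$, and that these functors preserve limits, so applying them to a pullback square exhibits the pulled-back map as a pullback of an equivalence of spaces, hence an equivalence. Whatever level of detail one would want for that detection statement, it is the ingredient that carries the proof, and it is absent from your proposal; without it (or some substitute going beyond ``compact and connected generators''), your argument does not close.
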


\begin{proof}
By \cite[Proposition 6.2.1.1]{LurieHTT}, and the preceding paragraph, it suffices
to show that the $K$-equivalences are stable under pullback. But this follows
from the fact that the functor \w{\Map\sb{\LWX}(h, -)} preserves limits.
\end{proof}

\begin{lemma}\label{lem3.7}
A $K$-equivalence between $K$-local objects is a weak equivalence.
\end{lemma}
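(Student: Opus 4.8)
The plan is to exploit the universal property of the localization functor $\LWX \to \LK^{\cocon}(X)$ together with the fact, recorded in Remark \ref{rmk3.4}, that a $K$-local object $z$ sees $K$-equivalences as genuine equivalences on mapping spaces $\Map_{\LWX}(z,-)$. Let $f : x \to y$ be a $K$-equivalence with both $x$ and $y$ being $K$-local. First I would pass to the localization $\LWX$: since $\W$ is saturated (Assumption \ref{ass3.1}(3)), it is enough to show that the image $\bar f$ of $f$ in $\LWX$ is an equivalence. Now $\bar f$ is a $K$-equivalence in $\LWX$, so for every $K$-local object $z$ the induced map $\Map_{\LWX}(z, x) \to \Map_{\LWX}(z, y)$ is a weak equivalence of spaces, by Remark \ref{rmk3.4}.

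The key step is then a Yoneda-type argument inside the full subcategory of $K$-local objects. Since both $x$ and $y$ are themselves $K$-local, we may take $z = x$ and $z = y$ in the previous paragraph. Taking $z = y$ gives that $f$ induces a weak equivalence $\Map_{\LWX}(y,x) \to \Map_{\LWX}(y,y)$, so there is a map $g : y \to x$ (a point of $\Map_{\LWX}(y,x)$) whose image is the connected component of $\id_y$; thus $f \circ g \simeq \id_y$, i.e., $g$ is a section of $f$ up to homotopy. Taking $z = x$, the map $\Map_{\LWX}(x,x) \to \Map_{\LWX}(x,y)$ is a weak equivalence; but $f \circ (g \circ f)$ and $f \circ \id_x$ both map to (something homotopic to) $f$ under this identification, since $f \circ g \simeq \id_y$ gives $f \circ (g\circ f) \simeq (f\circ g)\circ f \simeq f$. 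By injectivity on $\pi_0$ of $\Map_{\LWX}(x,x) \to \Map_{\LWX}(x,y)$ we conclude $g \circ f \simeq \id_x$. Hence $\bar f$ is an equivalence in $\LWX$, and by saturation $f$ is a weak equivalence in $X$.

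The main obstacle I anticipate is a bookkeeping one rather than a conceptual one: making the two-sided inverse argument precise requires working at the level of mapping \emph{spaces} in the quasi-category $\LWX$ (not just homotopy classes), so one has to be slightly careful that the homotopy $f \circ g \simeq \id_y$ can be transported to give the identity needed in the second application, i.e., that the diagram $\Map_{\LWX}(x,x) \to \Map_{\LWX}(x,y)$ sends $[g \circ f]$ and $[\id_x]$ to the same component. This is exactly the standard proof that a morphism which becomes an equivalence after applying a fully faithful (on the relevant subcategory) and essentially surjective embedding is already an equivalence; here the relevant subcategory is that of $K$-local objects, on which $\LWX \to \LK^{\cocon}(X)$ restricts to an equivalence by Definition \ref{def3.3}. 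Alternatively, and perhaps more cleanly, one can phrase the whole argument as: the restriction of the localization functor to $K$-local objects is an equivalence onto $\LK^{\cocon}(X)$, a $K$-equivalence between $K$-local objects maps to an equivalence in $\LK^{\cocon}(X)$ by Definition \ref{con2.5}, hence was already an equivalence in $\LWX$ (a fully faithful functor reflects equivalences), and finally saturation of $\W$ upgrades this to a weak equivalence in $X$.
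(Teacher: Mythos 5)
Your proposal is correct, and it actually contains two arguments. The closing ``alternatively, and perhaps more cleanly'' paragraph is precisely the paper's proof: since $x$ and $y$ are $K$-local, $f$ lies (up to equivalence) in the image of the fully faithful right adjoint $\phi$ of $\LWX\to\LK\sp{\cocon}(X)$ applied to $\J(\LK\sp{\cocon}(X))$, because by Definition \ref{con2.5} the image of $f$ in $\LK\sp{\cocon}(X)$ is an equivalence; hence $f$ is an equivalence in $\LWX$, and saturation (Assumption \ref{ass3.1}(3)) upgrades this to a weak equivalence in $X$. Your main argument \wh constructing an explicit two-sided homotopy inverse from the mapping-space characterization of local objects \wh is a more hands-on route: it only uses the $\pi\sb{0}$-level statement of Remark \ref{rmk3.4} rather than the adjunction and Definition \ref{def3.3} directly, at the cost of the bookkeeping with homotopies, which you handle correctly. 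One caveat on that route: the characterization that actually follows from the adjunction (and is standard for left Bousfield localizations) is that a $K$-equivalence $f:x\to y$ induces equivalences $\Map\sb{\LWX}(y,z)\to\Map\sb{\LWX}(x,z)$ for $K$-local $z$, i.e.\ mapping \emph{into} the local object; Remark \ref{rmk3.4} as printed (and your argument, which follows it) has the variance reversed. This is harmless here, since your two-sided-inverse argument runs verbatim in the correct variance: take $z=x$ to get $g$ with $g\circ f\simeq\id\sb{x}$, then $z=y$ and injectivity on $\pi\sb{0}$ to get $f\circ g\simeq\id\sb{y}$, after which saturation finishes the proof exactly as you say.
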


\begin{proof}
Suppose that \w{f : x \to y} is a $K$-equivalence. Then by Definition \ref{con2.5}
the image of $f$ in \w{\LWX} is in the essential image of
\w[,]{\J(\LK\sp{\cocon}(X)) \subseteq\LK\sp{\cocon}(X) \xrightarrow{\phi} \LWX}
where $\phi$ is the right adjoint of localization. In particular it is an equivalence
in \w[,]{\LWX} so that $f$ is a weak equivalence by Assumption \ref{ass3.1}(3).
\end{proof}

\begin{construction}\label{con2.8}
By \cite[Proposition 5.5.4.15]{LurieHTT}, the $K$-equivalences are the saturation of
the set $K$. Thus, by a small object argument of sufficient size, we can
construct a fibrant replacement \w{x\to\M\sb{K}(x)} of an object by a $K$-local one.

Consider the localization map
\w[.]{i\sb{\ast}:\LWX \leftrightarrows \LK \sp{\cocon}(X) : i\sp{\ast}}
  There is a commutative diagram in \w[:]{\LWX} 
$$
\xymatrix
{
x \ar[r] \ar[d] & \M\sb{K}(x) \ar[d] \\
i\sp{\ast}i\sb{\ast}(x) \ar[r] & i\sp{\ast}i\sb{\ast}\M\sb{K}(x)
}
$$
\noindent where the vertical maps are the counits of the adjunction.
The bottom horizontal and right vertical maps are equivalences in \w{\LWX}
by Lemma \ref{lem3.7} above.
\end{construction}

\begin{construction}\label{con3.2}
We endow $X$ with the structure of a category of fibrations and weak equivalences
(called the \emph{\ww{K}-equivalence structure}), as follows: the fibrations between
fibrant objects of $X$ are defined to be those morphisms of \w{\Fib} for which the
diagram
$$
\xymatrix
{
x \ar[d] \ar[r] & \mM\sb{K}(x) \ar[d] \\
y \ar[r] &  \mM\sb{K}(y)
}
$$
is a pullback in \w[.]{\LK(X)}

As usual, a map which is both a $K$-equivalence and a $K$-fibration is called
a $K$-\emph{trivial fibration}.
\end{construction}

\begin{thm}\label{thm3.5}
For each \w[,]{m \in \NN} the $K$-fibrations and $K$-equivalences
endow $X$ with the structure of a quasi-category with fibrations and weak equivalences.
\end{thm}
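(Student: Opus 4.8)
The plan is to verify the axioms of Definition \ref{def1.2} for the triple $(X, K\text{-equiv}, K\text{-fib})$, using the structures already established. First I would record that the $K$-equivalences form a subcategory of $X$ satisfying $2$-out-of-$3$: this is immediate from Definition \ref{con2.5}, since $f$ is a $K$-equivalence iff its image in $\LK\sp{\cocon}(X)$ is an equivalence, and equivalences in any quasi-category satisfy $2$-out-of-$3$. Next I would check that the $K$-fibrations form a class of fibrations in the sense of Definition \ref{def1.1}. Identity maps satisfy the defining pullback condition of Construction \ref{con3.2} trivially; closure under composition follows by pasting two pullback squares in $\LK(X)$ (noting $\M\sb K$ preserves composition up to the relevant homotopy). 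For condition (2) of \ref{def1.1}, pullbacks of $K$-fibrant objects exist because they already exist in $(X,\Fib,\W)$ by hypothesis, and such a pullback is again $K$-fibrant since $\M\sb K$ and the localization $i\sp{\ast}i\sb{\ast}$ are left exact (Lemma \ref{lemexact}). Condition (3) — stability of $K$-fibrations between $K$-fibrant objects under pullback along maps with $K$-fibrant source — follows by applying $\M\sb K$ to the pullback square and using that $\LK(X) \to \LWX$ reflects pullbacks together with the $2$-out-of-$3$ property for pullback squares.

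Then I would turn to the two conditions of Definition \ref{def1.2}. For (1), given a pullback square \eqref{eqpbd} in which $f$ is a $K$-trivial fibration between $K$-fibrant objects and $y,z,w$ are $K$-fibrant, I would apply $\M\sb K$: since $\M\sb K$ is left exact, the image square in $\LK(X)$ is still a pullback, and there $f$ becomes an equivalence-and-fibration, so its pullback $f'$ is too; pulling this information back via the reflecting property of $\LK(X)\to\LWX$ shows $f'$ is a $K$-equivalence, and the $K$-fibration condition for $f'$ is exactly the statement that its $\M\sb K$-image square is a pullback, which holds by pasting. For (2), the factorization axiom, I would take $f : x \to y$ with $y$ $K$-fibrant, first factor it in $(X,\Fib,\W)$ as a weak equivalence followed by a fibration $x \xra{w} z \xra{p} y$, then apply the small object argument from Construction \ref{con2.8} to the fibration $p$ to replace $z$ by a $K$-local (hence, after a further $\Fib$-factorization, $K$-fibrant) object $z \to \M\sb K(z) =: z'$; composing gives $x \to z' \to y$ where the first map is a $K$-equivalence (composite of a weak equivalence and the $K$-localization map) and the second is checked to be a $K$-fibration using that the defining square becomes a pullback after applying $\M\sb K$.

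The main obstacle I expect is the correct bookkeeping for the $K$-fibration condition under pullback and composition: the definition in Construction \ref{con3.2} is phrased in terms of a square being a pullback in $\LK(X)$ involving the (only functorial up to the small object argument) replacement $\M\sb K$, so one must be careful that the relevant squares genuinely paste and that the functoriality of $\M\sb K$ on the homotopy level suffices — equivalently, that one may compute everything after passing to $\LK(X)$, where $\M\sb K$ becomes the identity up to equivalence. Once it is established that $\M\sb K : \LWX \to \LK(X)$ is left exact (Lemma \ref{lemexact}) and that $\LK(X) \to \LWX$ both reflects equivalences (Assumption \ref{ass3.1}(3)) and reflects pullback squares, all the verifications reduce to routine diagram-chases in $\LK(X)$, and the hypothesis that $X$ has finite limits supplies the needed pullbacks throughout. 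I would remark that the parameter $m\in\NN$ in the statement plays no role here — it is a vestige of the intended application to $\spnX{0,n}$ — so the proof is uniform in $m$.
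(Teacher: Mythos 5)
The broad outline you propose (two-out-of-three for the $K$-equivalences, Definition \ref{def1.1} for the $K$-fibrations, then conditions (1) and (2) of Definition \ref{def1.2}) is the paper's, but your treatment of the factorization axiom \wh the one substantive axiom, handled in the paper by Proposition \ref{lem3.10} \wh has a genuine gap. You factor $f$ as $x\xra{w}z\xra{p}y$ in $(X,\Fib,\W)$, set $z':=\M_{K}(z)$, and then say ``composing gives $x\to z'\to y$''. But no map $z'\to y$ has been produced: the available arrows are $x\to z$, $z\to z'$ and $p:z\to y$, and they do not compose to give one. Producing it requires precisely the $K$-locality of the codomain $y$ (so that $p$ extends, up to homotopy, along the $K$-equivalence $z\to\M_{K}(z)$, cf.\ Remark \ref{rmk3.4}), a rectification of the resulting homotopy-commutative triangle, and then \wh since the extension need not lie in $\Fib$ \wh a further factorization in $(X,\Fib,\W)$ together with a check that the resulting fibration satisfies the pullback-square condition of Construction \ref{con3.2} (this last check does work, because both ends are $K$-local, so by Lemma \ref{lem3.7} the horizontal comparison maps are weak equivalences and the square is automatically cartesian). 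None of this appears in your plan; in particular the $K$-fibrancy of $y$ is never actually used, and without it the claimed factorization is simply not available. The paper's Proposition \ref{lem3.10} avoids the extension problem altogether by factoring $\M_{K}(f)$ as an equivalence followed by a fibration and pulling that factorization back along $z\to\M_{K}(z)$, identifying the two resulting maps via Lemmas \ref{lem3.9} and \ref{lem3.7}.

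A secondary, recurring issue: several of your steps (``apply $\M_{K}$ to the pullback square'', ``one may compute everything after passing to $\LK(X)$'') presuppose that the localization $X\to\LWX$ carries the relevant pullback squares of $X$ to pullbacks in $\LWX$. This is false for arbitrary pullbacks; it holds for pullbacks along fibrations between fibrant objects by \cite[Theorem 7.5.18]{CisiH}, and that is the essential external input in the paper's Lemmas \ref{lem3.8} and \ref{lem3.9}, combined with left exactness of $\LWX\to\LK\sp{\cocon}(X)$ (Lemma \ref{lemexact}) and the pasting law for pullbacks. Your appeal to the fact that the fully faithful right adjoint of $\LWX\to\LK\sp{\cocon}(X)$ reflects pullbacks does not substitute for this step. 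With these two points repaired, your verifications of Definition \ref{def1.1} for the $K$-fibrations and of condition (1) of Definition \ref{def1.2} do follow the paper's argument, and your observation that the quantifier $m\in\NN$ in the statement is vestigial is correct.
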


To prove this, we shall require some preliminary results:

\begin{lemma}\label{lem3.8}
Suppose we have a pullback diagram
$$
\xymatrix
{
x \times\sb{z} y \ar[r] \ar[d]\sb{g} & y \ar[d]\sp{f} \\
x \ar[r] & z
}
$$
\noindent where $f$ is a $K$-fibration and $z$, $x$, and $y$ are fibrant.
Then $g$ is a $K$-fibration as well.
 \end{lemma}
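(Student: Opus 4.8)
The plan is to deduce the statement from the two defining properties of a $K$-fibration in Construction \ref{con3.2}: that it lies in $\Fib$, and that the naturality square relating it to the $K$-local replacement becomes a pullback in $\LK(X)$. First I would check that $g$ lies in $\Fib$. Since $f\in\Fib$ and $z$, $x$, $y$ are fibrant, the pullback square displayed in the statement is a pullback of a fibration between fibrant objects along a map with fibrant source, so by Definition \ref{def1.1}(3) the map $g$ is in $\Fib$ (and $x\times_z y$ is fibrant, being the pullback of fibrant objects). So the only real content is the second condition.

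Next I would form the cube comparing the given pullback square to its image under $\mM_K$. Applying $\mM_K$ levelwise and using the natural counit maps $w\to\mM_K(w)$, one obtains a commutative cube whose front face is the given pullback square and whose back face is the square
$$
\xymatrix{
\mM_K(x\times_z y) \ar[r] \ar[d] & \mM_K(y) \ar[d] \\
\mM_K(x) \ar[r] & \mM_K(z)~.
}
$$
The key step is to show this back face is a pullback in $\LK(X)$. To see this, pass into $\LK(X)$ via the left-exact localization $\LWX\to\LK^{\cocon}(X)$ (Lemma \ref{lemexact}); there the front face remains a pullback (left exactness), and each vertical edge $w\to\mM_K(w)$ of the cube becomes an equivalence, since $\mM_K(w)$ is a $K$-local fibrant replacement and hence the map $w\to\mM_K(w)$ is a $K$-equivalence. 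A commutative cube in which one face is a pullback and all four connecting edges are equivalences has its opposite face a pullback as well; this gives that the back square is a pullback in $\LK(X)$.

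Finally I would extract the square needed for $g$. Because $f$ is a $K$-fibration, the right-hand square
$$
\xymatrix{
y \ar[r] \ar[d]_{f} & \mM_K(y) \ar[d] \\
z \ar[r] & \mM_K(z)
}
$$
is a pullback in $\LK(X)$. Pasting this pullback square onto the back-face pullback square obtained above (along the common edge $\mM_K(y)\to\mM_K(z)$), and using the pasting law for pullbacks together with the front-face pullback in $\LK(X)$, one concludes that the square
$$
\xymatrix{
x\times_z y \ar[r] \ar[d]_{g} & \mM_K(x\times_z y) \ar[d] \\
x \ar[r] & \mM_K(x)
}
$$
is a pullback in $\LK(X)$, which is exactly the condition that $g$ be a $K$-fibration. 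I expect the main obstacle to be the bookkeeping in this last step — arranging the several squares of the cube so that the pasting law can be applied cleanly, and making sure all the identifications take place in $\LK(X)$ rather than in $\LWX$, where the $\mM_K$-comparison maps need not be equivalences.
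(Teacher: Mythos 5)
Your argument is essentially the paper's proof: the same cube comparing the given pullback square with its image under $\mM_K$, the same three faces known to be pullbacks, and the same appeal to the pasting law for pullbacks in a quasi-category (\cite[Lemma 4.4.2.1]{LurieHTT}) to conclude that the square exhibiting $g$ as a $K$-fibration is a pullback in \w[;]{\LK(X)} your preliminary check that \w{g\in\Fib} via Definition \ref{def1.1}(3) is a point the paper leaves implicit. Where you genuinely differ is in proving that the $\mM_K$-face is a pullback: the paper identifies that face, in \w[,]{\LWX} with \w{i\sp{\ast}i\sb{\ast}} applied to the original face (Construction \ref{con2.8} and Lemma \ref{lem3.7}) and then applies Lemma \ref{lemexact} twice, while you argue directly in \w[,]{\LK(X)} using that each coaugmentation \w{w\to\mM_K(w)} is a $K$-equivalence and hence becomes an equivalence there, so the $\mM_K$-face is equivalent to a pullback square; this is a correct and somewhat more direct variant. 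Two small repairs are needed. First, ``the front face remains a pullback (left exactness)'' is not justified as stated: Lemma \ref{lemexact} transports pullbacks from \w{\LWX} to \w[,]{\LK(X)} but you must first know that the image in \w{\LWX} of the given pullback square of $X$ is still a pullback, which is not automatic for the localization \w{X\to\LWX} and holds here only because $f$ is a fibration between fibrant objects, by \cite[Theorem 7.5.18]{CisiH} \wh the same fact the paper invokes (see also Lemma \ref{lem3.9}). Second, the final pasting is misdescribed: the $\mM_K$-face and the defining square for $f$ share \w{\mM_K(y)\to\mM_K(z)} as a common right-hand edge, so they do not compose along it; the correct bookkeeping compares the two decompositions of the rectangle from \w{x\times\sb{z}y} to \w{\mM_K(z)} (original face pasted with the square for $f$, versus the square for $g$ pasted with the $\mM_K$-face) and then applies the pasting law \wh the standard argument, easily fixed.
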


\begin{proof}

Consider the diagram:
$$
\xymatrix@C=1.5em@R=1.5em{
& x \times\sb{z} y \ar@{-}[rr] \ar@{-}[d]_>>>{g} \ar@{-}[dd]
& & x \ar@{-}[dd]
\\
\mM\sb{K}(x \times\sb{z} y) \ar@{-}[ur]\ar@{-}[rr]\ar@{-}[dd]
& & \mM\sb{K}(x) \ar@{-}[ur]\ar@{-}[dd]
\\
& y \ar@{-}[rr]
& & z
\\
\mM\sb{K}(y) \ar@{-}[rr]\ar@{-}[ur]
& &\mM\sb{K}(z) \ar@{-}[ur]
}
$$
in \w[.]{\LWX} Since localization preserves pullbacks by fibrations by
\cite[Theorem 7.5.18]{CisiH}, we can assume that the back face is a pullback in
\w[.]{\LWX}

The front face of the cube is equivalent in \w{\LWX} to \w{i\sp{\ast}i\sb{\ast}}
applied to the back face by the discussion of \S \ref{con2.8}.
Thus, the front face of the cube is a pullback in \w{\LWX} by
Lemma \ref{lemexact}. The images of the back and front faces of the cube in
\w{\LLc \sb{K}(X)} are also pullbacks by another application of
Lemma \ref{lemexact}. By hypothesis, the right face is a pullback in
\w{\LWX} so the pasting law for pullbacks in a
quasi-category (\cite[Lemma 4.4.2.1]{LurieHTT}) implies the required result.
\end{proof}

\begin{lemma}\label{lem3.9}
Suppose we have a pullback diagram in $X$
$$
\xymatrix
{
x \times\sb{z} y \ar[r] \ar[d]\sb{h} & y \ar[d]\sp{f} \\
x \ar[r]\sb{g} & z
}
$$
where $f$ is a $K$-fibration, $z, x, y$ are fibrant and $g$ is a
$K$-equivalence. Then $h$ is a $K$-equivalence as well.
\end{lemma}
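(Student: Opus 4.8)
The plan is to mimic the proof of Lemma \ref{lem3.8}, transferring the pullback along the localization map $\LWX\to\LK\sp{\cocon}(X)$ and exploiting the fact (Lemma \ref{lemexact}) that this localization is left exact, together with the saturation of $\W$. First I would form the same commutative cube in $\LWX$ as in Lemma \ref{lem3.8}, with the original square as back face and the square of $\mM\sb{K}$-replacements as front face; by \cite[Theorem 7.5.18]{CisiH} the localization preserves pullbacks along fibrations, so the back face is a pullback in $\LWX$, and by \S \ref{con2.8} and Lemma \ref{lemexact} the front face is a pullback in $\LWX$ as well. Applying $i\sp{\ast}i\sb{\ast}$ (equivalently, passing to $\LK\sp{\cocon}(X)$), both faces remain pullbacks there by Lemma \ref{lemexact}.

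Next I would observe that in $\LK\sp{\cocon}(X)$ the map $g$ becomes an equivalence, since $g$ is a $K$-equivalence. In the front face — the pullback square of $\mM\sb{K}$'s — passing to $\LK\sp{\cocon}(X)$ we are pulling back along the image of $g$, which is an equivalence; hence the parallel map $\mM\sb{K}(x\times\sb{z}y)\to\mM\sb{K}(y)$ is also an equivalence in $\LK\sp{\cocon}(X)$. But a map of $K$-local objects that becomes an equivalence in $\LK\sp{\cocon}(X)$ is a $K$-equivalence between $K$-local objects, hence a weak equivalence by Lemma \ref{lem3.7}; in particular it is an equivalence in $\LWX$. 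Now chase the cube in $\LWX$: the front face is a pullback, two of its parallel edges ($\mM\sb{K}(x\times\sb{z}y)\to\mM\sb{K}(y)$, and the edge $x\times\sb{z}y\to\mM\sb{K}(x\times\sb{z}y)$ together with $y\to\mM\sb{K}(y)$, which are equivalences by construction) are understood, and the vertical counit maps $x\times\sb{z}y\to\mM\sb{K}(x\times\sb{z}y)$ and $y\to\mM\sb{K}(y)$ need not be equivalences, so one must be slightly careful here.

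The cleanest route, which I would adopt, is: the image of $h$ in $\LK\sp{\cocon}(X)$ fits in the pullback square (front face, pushed to $\LK\sp{\cocon}(X)$) as the pullback of the image of $g$ along the image of $f$; since the image of $g$ is an equivalence and pullbacks of equivalences are equivalences, the image of $h$ in $\LK\sp{\cocon}(X)$ is an equivalence, and therefore $h$ is a $K$-equivalence by Definition \ref{con2.5}. The one point needing justification is that the image of $h$ in $\LK\sp{\cocon}(X)$ really is computed as this pullback — i.e.\ that the front face of the cube, after localization, has the bottom-left vertex $\mM\sb{K}(z)$ receiving the image of $z$ in a way compatible with $g$; this is exactly the naturality of the $\mM\sb{K}$-replacement together with the fact, from \S \ref{con2.8}, that each $x\to\mM\sb{K}(x)$ is a $K$-equivalence, so that the images of $g$ and of $\mM\sb{K}(g)$ agree in $\LK\sp{\cocon}(X)$ up to the (invertible) comparison maps. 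The main obstacle is thus purely bookkeeping: identifying the image of the original square $h$ with the localized front face and verifying the compatibility of the counit maps with $g$, after which stability of equivalences under pullback in the quasi-category $\LK\sp{\cocon}(X)$ (using \cite[Lemma 4.4.2.1]{LurieHTT}) finishes the argument.
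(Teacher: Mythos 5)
Your final argument is correct and is essentially the paper's own proof: the paper simply observes that the square maps to a pullback in $\LWX$ by \cite[Theorem 7.5.18]{CisiH} and that $K$-equivalences are stable under pullback there by Lemma \ref{lemexact}, which (after unwinding Definition \ref{con2.5}) is exactly your ``cleanest route'' through $\LK\sp{\cocon}(X)$. The preliminary cube with the $\mM\sb{K}$-replacements is an unnecessary detour borrowed from Lemma \ref{lem3.8} (the original square itself stays a pullback after localization, so no comparison with the front face is needed), and you rightly discard it.
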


\begin{proof}
The image of the pullback square in $X$ in \w{\LWX} is a pullback as well by
\cite[Theorem 7.5.18]{CisiH}. Pullbacks preserve $K$-equivalences in
\w[,]{\LWX} by \ref{lemexact}.
\end{proof}

\begin{prop}\label{lem3.10}
Suppose that \w{f : x \to z} is a map between $K$-fibrant objects.
Then we can factor it as a $K$-equivalence followed by a $K$-fibration.
\end{prop}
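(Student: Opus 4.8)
The plan is to use the factorization into a weak equivalence followed by a fibration that is already available in $(X,\Fib,\W)$ (Definition \ref{def1.2}(2)), and then to \emph{correct} the resulting intermediate object so that it becomes $K$-fibrant and the two legs become a $K$-equivalence and a $K$-fibration respectively. First I would observe that since $x$ and $z$ are $K$-fibrant, they are in particular fibrant, so Definition \ref{def1.2}(2) applies to $f:x\to z$, giving a factorization $x \xrightarrow{a} w \xrightarrow{p} z$ with $a$ a weak equivalence and $p$ a fibration. A weak equivalence is in particular a $K$-equivalence (by Assumption \ref{ass3.1}(3) and the fact that $K$-equivalences form the saturation of $K$, which contains $\W$), so the left leg $a$ is already a $K$-equivalence; the issue is that $w$ need not be $K$-local, and $p$ need not be a $K$-fibration.

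The key step is to replace $w$ by its $K$-localization relative to $z$. Using Construction \ref{con2.8}, form the $K$-local replacement $w \to \mM\sb{K}(w)$; since $z=\mM\sb{K}(z)$ is already $K$-local, the map $p$ extends over $\mM\sb{K}(w)$, giving $q:\mM\sb{K}(w)\to z$. Now I would pull $q$ back along a fibrant replacement if necessary so that the comparison $w \to \mM\sb{K}(w)\times\sb{z} w$ — actually, more carefully: I want to produce the factorization by taking the pullback of $\mM\sb{K}(w)\to z$ along $\id\sb{z}$, which is trivial, so instead the right move is to replace $p$ by the composite $x \xrightarrow{a} w \xrightarrow{} \mM\sb{K}(w) \xrightarrow{q} z$ after checking that $w\to\mM\sb{K}(w)$ is a $K$-equivalence between objects with the correct fibrancy. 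Then $\mM\sb{K}(w)$ is $K$-fibrant, the composite $x\to \mM\sb{K}(w)$ is a $K$-equivalence by two-out-of-three (both $a$ and $w\to\mM\sb{K}(w)$ are), and it remains to see $q$ is a $K$-fibration. For this I would use Construction \ref{con3.2}: $q$ is a fibration of $\Fib$ (as a pullback/composite in the appropriate sense, or by factoring again), and one checks the defining square
$$
\xymatrix{
\mM\sb{K}(w) \ar[r] \ar[d]\sb{q} & \mM\sb{K}(\mM\sb{K}(w)) \ar[d] \\
z \ar[r] & \mM\sb{K}(z)
}
$$
is a pullback in $\LK(X)$ — but since $\mM\sb{K}(w)$ and $z$ are already $K$-local, both horizontal maps are equivalences in $\LWX$, hence the square is automatically a pullback.

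The main obstacle I anticipate is the bookkeeping in the second step: making sure that the map $q:\mM\sb{K}(w)\to z$ one obtains genuinely lies in $\Fib$ (and not merely becomes a fibration after localization), since Construction \ref{con2.8} produces $\mM\sb{K}(w)$ only up to $K$-equivalence and the small object argument there need not interact cleanly with the class $\Fib$. The fix is to first factor $w\to\mM\sb{K}(w)$, or rather the composite $w\to z$, as a weak equivalence followed by a fibration \emph{after} $K$-localization: concretely, factor $\mM\sb{K}(w)\to z$ using Definition \ref{def1.2}(2) as $\mM\sb{K}(w)\xrightarrow{\sim} w' \xrightarrow{p'} z$ with $p'\in\Fib$; since $\mM\sb{K}(w)$ is $K$-fibrant and the first map is a weak equivalence, $w'$ is $K$-fibrant too, and $p'$ is then a $K$-fibration because the relevant square has equivalences along its horizontal edges. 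Composing everything, $x\to w'$ is a $K$-equivalence (two-out-of-three again) and $p':w'\to z$ is a $K$-fibration, which is the desired factorization. One should double-check at the end that all objects appearing are fibrant in the original sense so that the constructions of Section \ref{cqccf} legitimately apply — this holds since $K$-fibrant objects are fibrant by Remark \ref{rmk3.4}.
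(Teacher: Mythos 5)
The single genuine gap is the map \w{q\colon\mM\sb{K}(w)\to z}: you assert that, because $z$ is $K$-local, the fibration $p$ ``extends over \w[''.]{\mM\sb{K}(w)} What $K$-locality of $z$ gives (Remark \ref{rmk3.4}) is that \w{\Map\sb{\LWX}(\mM\sb{K}(w),z)\to\Map\sb{\LWX}(w,z)} is an equivalence, so the extension exists only as a morphism of the localized quasi-category \w[,]{\LWX} i.e.\ as a zigzag in $X$; nothing in the framework of Sections \ref{cqccf}--\ref{clbl} lets you rectify it to a $1$-morphism of $X$ under $w$. In particular you cannot run a lifting-property argument through Construction \ref{con2.8}: $K$-fibrancy is defined as fibrant plus $K$-local, not by a right lifting property, and the structure carries no cofibrations, so ``$z$ sees \w{w\to\mM\sb{K}(w)} as a trivial cofibration'' is not available (your writing \w{z=\mM\sb{K}(z)} is symptomatic of this conflation). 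Your proposed repair \wh factoring \w{\mM\sb{K}(w)\to z} by Definition \ref{def1.2}(2) \wh presupposes the very map whose existence is in doubt; note also that you need $q$ to be compatible with $p$ up to homotopy for the final composite \w{x\to w'\to z} to be homotopic to $f$.

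This is precisely the point the paper's proof is designed to avoid: instead of mapping out of a localization back into $z$, it factors the localized map \w{\mM\sb{K}(f)\colon\mM\sb{K}(x)\to\mM\sb{K}(z)} as an equivalence followed by a fibration and then pulls the fibration back along \w[;]{z\to\mM\sb{K}(z)} such pullbacks exist in $X$ by Definition \ref{def1.1}, the universal property of the pullback supplies the comparison map \w[,]{\psi\colon x\to y'} and Lemmas \ref{lem3.7}, \ref{lem3.8} and \ref{lem3.9} finish the argument. The second half of your proposal is sound and close in spirit to the paper's: the observation that a morphism of \w{\Fib} between $K$-fibrant objects is automatically a $K$-fibration (the comparison square of Construction \ref{con3.2} has horizontal equivalences, hence is a pullback in \w[)]{\LK(X)} and the two-out-of-three bookkeeping for the $K$-equivalence would both go through, but only after the intermediate object has been produced by a pullback construction rather than by the extension you invoke.
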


\begin{proof}

Form the diagram
$$
\xymatrix
{
x \ar[ddr]\sb{\psi} \ar[drr] \ar@{.>}[dr] & &  \\
& \ar[d] y'' \ar[r] & \mM\sb{K}(x) \ar[d] \\
 & y' \ar[r] \ar[d]\sb{\phi} & y \ar[d] \\
 & z \ar[r] & \mM\sb{K}(z)
}
$$
\noindent in $X$, where the right vertical composite is a factorization of
\w{\mM\sb{K}(f)} as an equivalence followed by a fibration,
and both squares are pullbacks.
Note that we can assume that \w{\mM\sb{K}(z)} and \w{\mM\sb{K}(x)}
are fibrant, so that the above pullbacks are guaranteed to exist.
We claim that \w{\phi \circ \psi} gives the required factorization:

The horizontal map \w{y' \to y} is a $K$-equivalence by Lemma \ref{lem3.9}
above. The objects \w[,]{\mM\sb{K}(y)} \w[,]{\mM\sb{K}(x)} and \w{y'}
are  $K$-local, so \w{\mM\sb{K}(x)\to y}  is a
weak equivalence, and hence a $K$-equivalence.
By the $2$-out-of-$3$ property, we see that $\psi$ is a $K$-equivalence.

To check that $\phi$ is a $K$-fibration, it suffices to show that
$y$ is weakly equivalent to $\mM\sb{K}(y')$. Since $\mM\sb{K}(z)$
is $K$-fibrant, we can conclude that $y$ is $K$-fibrant. A
$K$-equivalence between $K$-local objects is a weak equivalence by
Lemma \ref{lem3.7}. Thus $y$ is $K$-local, being weakly equivalent to
\w[.]{\mM\sb{K}(x)} It follows from Remark \ref{rmk3.4} that $y$ is
$K$-fibrant. It is also $K$-equivalent, and hence
weakly equivalent, to \w[.]{\mM\sb{K}(y')}
\end{proof}

\begin{proof}[Proof of Theorem \protect{\ref{thm3.5}}]
First, we check that the $K$-fibrations form a class of fibrations
in the sense of \ref{def1.1}. Property (3) is just a special case of \ref{lem3.8}
above. It is immediate from
the definition that the identity map is a $K$-fibration.
Pullbacks of $K$-fibrant objects exist, since they are in particular
pullbacks of fibrant objects. Composition preserves $K$-fibrations by the
2-out-of-3 property for pullbacks in a quasi-category (\cite[Lemma 4.4.2.1]{LurieHTT}).

Pullbacks of fibrations in $X$ are taken to pullbacks in
\w{\LWX} by \cite[Theorem 7.5.18]{CisiH}.  If $f$ is a $K$-equivalence and
a $K$-fibration, the induced map \w{\mM\sb{K}(x) \rightarrow \mM\sb{K}(y)} is an
equivalence in \w[.]{\LWX} Thus, $f$ is a pullback of an equivalence in \w[,]{\LWX}
and thus itself represents an equivalence in $\LWX$. It is also a $K$-fibration by \ref{lem3.8} above. 

\end{proof}

\begin{remark}\label{rmk3.11}
Given a model category $\bM$, the fibrations of a left Bousfield localization
\w{\LLc(\bM)} of $\bM$ are somewhat difficult to describe.
The paper \cite{BarwL} gives a nice characterization of fibrations whose target
is in an admissible left exact and right proper subcategory $E$
(see \cite[Definition 4.15]{BarwL}). An example of such a subcategory
is the subcategory of fibrant objects (see \cite[Example 4.17]{BarwL}).
We used this characterization of fibrations in the localization with
fibrant source as our definition of $K$-fibrations above.
\end{remark}

%
%
\sect{Localization with respect to a set of objects}
\label{clocth}

In this section, we will show that the localization of a locally presentable
quasi-category with respect to a class of $H$-equivalences (associated to a small
set of objects $H$) is a right Bousfield localization in the sense of
Definition \ref{def2.3}.
This is an application of the Adjoint Functor Theorem of \cite{Raptis}, using
the fact that the mapping spaces of the localization are locally small, by
Lemma \ref{lem2.6} below. The proof of this Lemma is inspired by the
``bounded cofibration arguments'' common in localization theory for model categories
(see the introduction of \cite{JardC} as well as Lemma 4.9 there).

\begin{defn}\label{def2.4}
Let \w{(X,\Fib,\W)} be a quasi-category with fibrations and weak equivalences.
Let $H$ be a small collection of objects in $X$. We say that a $1$-morphism
\w{f : x \to y} in $X$ is an \emph{$H$-equivalence} if
$$
\Map\sb{\LWX}(h, x) \to \Map\sb{\LWX}(h, y)
$$
is a weak equivalence for each \w[.]{h \in H} An object $z$ is called \emph{$H$-local}
if and only if, for each $H$-equivalence \w[,]{f : x \to y} the induced map
$$
\Map\sb{\LWX}(y, z) \to \Map\sb{\LWX}(x, z)
$$
is a weak equivalence.
\end{defn}

Note that every map in $\W$ is automatically an $H$-equivalence, since it is
an equivalence in \w[.]{\LWX} By a slight abuse of notation, we will write
\w{\LHX} for the localization of $X$ at the $H$-equivalences.

\begin{lemma}\label{lem2.5}
Suppose that $X$ is a locally presentable quasi-category. Then
\begin{enumerate}
\renewcommand{\labelenumi}{(\arabic{enumi})~}
\item Each object is $\lambda$-compact for some regular cardinal $\lambda$.
\item For each regular cardinal $\lambda$, the set of $\lambda$-compact objects
  is essentially small.
\end{enumerate}
\end{lemma}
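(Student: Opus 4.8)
Both statements are the standard structural facts about locally presentable $\infty$-categories, so the plan is to reduce them to results already available in the literature rather than re-prove them from scratch. For \textbf{(1)}, recall that $X$ being locally presentable means (by \cite[Definition 5.5.0.1]{LurieHTT}) that $X$ is presentable: it admits all small colimits and is $\kappa$-accessible for some regular cardinal $\kappa$, i.e.\ there is a small set $S$ of $\kappa$-compact objects generating $X$ under $\kappa$-filtered colimits. Given any object $x \in X$, write $x$ as a $\kappa$-filtered colimit of objects in $S$; since $S$ is small, this diagram is $\mu$-small for some regular cardinal $\mu > \kappa$, and one checks that $x$ is then $\lambda$-compact for any regular cardinal $\lambda \geq \mu$ with $\lambda \triangleright \kappa$ (in the sense of \cite[Definition 5.4.2.8]{LurieHTT}). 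The cleanest citation is \cite[Proposition 5.4.2.19]{LurieHTT}, which states precisely that in a $\kappa$-accessible $\infty$-category every object is $\lambda$-compact for suitable $\lambda$; combined with the fact (from the definition of local presentability) that $X$ is accessible, this is exactly (1).

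For \textbf{(2)}, the assertion is that for each regular cardinal $\lambda$ the full subcategory $X^{\lambda}$ of $\lambda$-compact objects is essentially small. This is \cite[Remark 5.4.2.9]{LurieHTT} together with the fact that in an accessible $\infty$-category the $\lambda$-compact objects form a small (up to equivalence) family: indeed, $X$ is $\kappa$-accessible for some $\kappa$, hence equivalent to $\operatorname{Ind}_\kappa(\C)$ for a small $\infty$-category $\C$, and in $\operatorname{Ind}_\kappa(\C)$ the $\lambda$-compact objects (for $\lambda \geq \kappa$) are precisely the $\lambda$-small $\kappa$-filtered colimits of objects of $\C$, of which there are only a set up to equivalence. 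For $\lambda < \kappa$ one reduces to the $\kappa$-compact case since a $\lambda$-compact object is in particular $\kappa$-compact. The precise statement is \cite[Proposition 5.4.2.17]{LurieHTT}, or one may simply cite \cite[\S 5.4.2]{LurieHTT} wholesale.

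Thus the proof is essentially a matter of assembling the right references. The only mild subtlety — and the step I would be most careful about — is the bookkeeping with the "sharply larger" relation $\lambda \triangleright \kappa$ on regular cardinals in part (1): one must ensure that the cardinal $\lambda$ witnessing $\lambda$-compactness of a given object can always be chosen, and that $\lambda$-compactness is inherited upward along $\triangleright$. Both points are handled in \cite[\S 5.4.2]{LurieHTT}, so in the write-up I would state (1) as an immediate consequence of accessibility via \cite[Proposition 5.4.2.19]{LurieHTT} and (2) via \cite[Remark 5.4.2.9]{LurieHTT} (or \cite[Proposition 5.4.2.17]{LurieHTT}), without belaboring the cardinal arithmetic.

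\begin{proof}
Since $X$ is locally presentable, it is in particular accessible (see \cite[Definition 5.5.0.1]{LurieHTT}); say $X$ is $\kappa$-accessible for some regular cardinal $\kappa$. Then statement (1) is \cite[Proposition 5.4.2.19]{LurieHTT}: every object of a $\kappa$-accessible $\infty$-category is $\lambda$-compact for some regular cardinal $\lambda$ with $\lambda\triangleright\kappa$. Statement (2) follows from \cite[Remark 5.4.2.9]{LurieHTT} (compare \cite[Proposition 5.4.2.17]{LurieHTT}): for $\lambda\geq\kappa$ the $\lambda$-compact objects of $X\simeq\operatorname{Ind}\sb{\kappa}(\C)$, for $\C$ a suitable small $\infty$-category, are the $\lambda$-small $\kappa$-filtered colimits of objects of $\C$, of which there is only a set up to equivalence; for $\lambda<\kappa$ a $\lambda$-compact object is $\kappa$-compact, so the claim reduces to the previous case.
\end{proof}
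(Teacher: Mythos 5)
Your argument is correct in substance and follows the same strategy as the paper: both proofs simply assemble the standard accessibility facts from \cite[\S 5.4.2]{LurieHTT}. For part (1) the paper does exactly what you sketch, with explicit references: present $x$ as a $\kappa$-filtered colimit of $\kappa$-compact objects (\cite[Proposition 5.4.2.2]{LurieHTT}), raise the index of accessibility above the size of the indexing diagram (\cite[Proposition 5.4.2.11]{LurieHTT}), and conclude that a $\lambda$-small colimit of $\lambda$-compact objects is $\lambda$-compact (\cite[Corollary 5.3.4.15]{LurieHTT}). For part (2) your route differs slightly: you describe the $\lambda$-compact objects of $X\simeq\operatorname{Ind}_{\kappa}(\mathcal{C})$ directly, whereas the paper just raises the accessibility index to some $\lambda'>\lambda$, notes that every $\lambda$-compact object is $\lambda'$-compact, and quotes the essential smallness of the $\lambda'$-compact objects from \cite[Proposition 5.4.2.2]{LurieHTT}. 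The paper's maneuver also sidesteps the one loose point in your write-up: for a general regular $\lambda\geq\kappa$ the $\lambda$-compact objects of $\operatorname{Ind}_{\kappa}(\mathcal{C})$ are only the \emph{retracts} of $\lambda$-small $\kappa$-filtered colimits of objects of $\mathcal{C}$, and even that characterization is stated under the hypothesis that $\lambda$ is sharply larger than $\kappa$. Essential smallness survives these corrections, so this is an imprecision rather than a gap, but you should either add the retracts and the sharply-larger hypothesis or argue as the paper does; also double-check your citation numbers, since the statements actually used are \cite[Propositions 5.4.2.2 and 5.4.2.11]{LurieHTT} rather than the items you quote.
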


\begin{proof}
(1)\ By \cite[Proposition 5.4.2.2]{LurieHTT}, we can present any \w{x\in X} as a
$\kappa$-filtered colimit of $\kappa$-compact objects, indexed by a diagram $I$.
Since we can raise the index of accessibility (\cite[Proposition 5.4.2.11]{LurieHTT}),
we can assume that $X$ is a $\lambda$-accessible for \w[.]{\lambda >|I|} Thus, $x$ is a
colimit of a $\lambda$-bounded diagram of $\lambda$-compact objects and is thus
$\lambda$-compact by \cite[Corollary 5.3.4.15]{LurieHTT}.

\noindent(2)\ Using \cite[Proposition 5.4.2.11]{LurieHTT}, choose \w{\lambda' > \lambda}
such that $X$ is \ww{\lambda'}-accessible. By \cite[Proposition 5.4.2.2]{LurieHTT},
the set of \ww{\lambda'}-compact, and hence $\lambda$-compact, objects of $X$ is
essentially small.
\end{proof}

\begin{lemma}\label{lem2.6}
Consider a locally presentable quasi-category $X$ equipped with fibrations \w{\Fib}
and weak equivalences $\W$.  Suppose that \w{\LWX} is locally presentable accessible
localization of $X$. Then for each \w{x\in X} there is a regular cardinal $\lambda$
such that for each $H$-equivalence \w{s : y \to x} there is a $\lambda$-compact object
$z$ in \w{\LWX} and a diagram of $H$-equivalences
$$
\xymatrix
{
z \ar[d] \ar[r] &x \\
y \ar[ur]\sb{s} &
}
$$
\end{lemma}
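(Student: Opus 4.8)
The plan is to run a ``bounded cofibration argument'' in the style of model-category localization theory (cf.\ \cite{JardC}), transported to the quasi-categorical setting via the presentation of $\LWX$ as a locally presentable quasi-category. First I would fix a regular cardinal $\mu$ large enough that: every object of $H$ is $\mu$-compact in $\LWX$; the object $x$ is $\mu$-compact (using Lemma \ref{lem2.5}(1)); and $\LWX$ is $\mu$-accessible so that every object is canonically a $\mu$-filtered colimit of $\mu$-compact ones. The target cardinal $\lambda$ will be chosen afterwards, larger than $\mu$ and large enough to absorb the (essentially small, by Lemma \ref{lem2.5}(2)) set of all $H$-equivalences between $\mu$-compact objects.

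Given an $H$-equivalence $s:y\to x$, the idea is to write $y=\colimit{i\in I}y\sb{i}$ as a $\mu$-filtered colimit of $\mu$-compact objects in $\LWX$. Since $x$ is $\mu$-compact, the composite $y\sb{i}\to y\xra{s}x$ factors (up to the appropriate homotopy coherence) through finitely much of the diagram, and more importantly $\map\sb{\LWX}(h,-)$ commutes with $\mu$-filtered colimits for each $h\in H$ (that is what $\mu$-compactness of $h$ buys us). The strategy is then to build, by a transfinite induction of length $<\mu$, an increasing chain of $\mu$-compact subobjects $z\sb{0}\subseteq z\sb{1}\subseteq\cdots$ of $y$, each mapping to $x$, with the property that the colimit $z=\colim z\sb{\alpha}$ is still $\mu$-compact and the inclusion $z\hookrightarrow y$ is an $H$-equivalence: at each stage one enlarges $z\sb{\alpha}$ just enough to kill, in $\map\sb{\LWX}(h,z\sb{\alpha+1})$, the relevant homotopy classes of $\map\sb{\LWX}(h,y)$ and of $\map\sb{\LWX}(h,x)$ that are not yet hit or not yet identified — possible because these mapping spaces are $\mu$-filtered colimits over the respective diagrams, so every element, and every relation, is already visible at some $\mu$-compact stage. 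Taking $\lambda>\mu$ so that the resulting $z$ is $\lambda$-compact, the two-out-of-three property for $H$-equivalences (all three of $s$, $z\hookrightarrow y$, and $z\to x$ being detected on $\map\sb{\LWX}(h,-)$) then shows $z\to x$ is an $H$-equivalence as well, giving the desired commuting triangle.

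The main obstacle will be the bookkeeping of the transfinite construction in the $\infty$-categorical setting: one must make the ``enlarge $z\sb{\alpha}$ to hit the missing classes/relations'' step precise using the mapping spaces $\map\sb{\LWX}(h,-)$ rather than strict hom-sets, and ensure that the chain of $\mu$-compact objects has colimit again $\mu$-compact (so that $\lambda$ can be chosen uniformly, independent of $s$). This is where local presentability of $\LWX$ and the cardinality estimates of Lemma \ref{lem2.5} do the real work; the cardinal $\lambda$ must be large enough to bound both the length of the induction and the size of the essentially small collection of $H$-equivalences between $\mu$-compact objects, so that a single $\lambda$ works for all $H$-equivalences $s$ with codomain $x$.
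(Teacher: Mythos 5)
Your plan is correct and is essentially the paper's own argument: a bounded-cofibration-style transfinite induction that writes $y$ as a filtered colimit of compact objects, enlarges a compact subobject stage by stage to achieve surjectivity and injectivity on $\pi\sb{\ast}\Map\sb{\LWX}(h,-)$ (using that these functors commute with sufficiently filtered colimits), and concludes by compactness of the resulting colimit together with $2$-out-of-$3$ for $H$-equivalences. The only cosmetic difference is that you ask $x$ itself to be $\mu$-compact, whereas the paper instead bounds the relevant hom-sets among objects of $H$ and a generating set $S$; since $\lambda$ is allowed to depend on $x$, both choices work.
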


\begin{proof}
Let $S$ be a small set of objects such that each element of $X$ can be written
canonically as a filtered colimit of elements of $S$
(see \cite[Proposition 5.5.1.1]{LurieHTT}). By Lemma \ref{lem2.5} above, we can choose a
regular cardinal $\lambda$ such that
\begin{enumerate}
\renewcommand{\labelenumi}{(\arabic{enumi})~}
\item $H$ is $\lambda$-bounded, and for each \w{h \in H} the functor
  \w{\Map\sb{\LWX}(h, -)} commutes with $\lambda$-filtered colimits.
\item For each choice of \w{a,b \in (H \cup S)} and \w[,]{n \in \NN} the set of
$n$-simplices of \w{\Map\sb{\LWX}(a,b)} is $\lambda$-bounded.
\item The localization map is $\lambda$-accessible: that is, it preserves
$\lambda$-filtered colimits.
\end{enumerate}

Let us write
$$
y~=~\underset{s \in I}{\colim} \,y\sb{s}
$$
as a $\lambda$-filtered colimit of objects in $S$. For each \w[,]{\lambda' < \lambda}
we define a sub-object \w{z\sb{\lambda'}} of $y$ by transfinite induction.
We start the induction as follows: for each object \w{h \in H} and
\w[,]{\alpha \in \pi\sb{i}\Map\sb{X}(h, x))} we choose an object
\w{y\sb{(i, \alpha, h)} := y\sb{s}} \wb{s \in I} such that
$$
\alpha \in\Image(\pi\sb{i}\Map\sb{\LWX}(h, y\sb{(i, \alpha, h)})\to
\pi\sb{i}\Map\sb{\LWX}(h, x)).
$$
\noindent Let \w{I\sb{0} \subseteq I} be the full subcategory with objects of the form
\w[.]{y\sb{(i, \alpha, h)}} This is guaranteed to have a $\lambda$-bounded set of
morphisms by assumption (2) on $\lambda$. Now put
$$
z\sb{0}~:=~\underset{s \in I\sb{0}}{\colim} \, y\sb{s}.
$$
In the inductive step there are two possibilities\vsm:

\noindent\textbf{Case 1:} for a successor cardinal \w[,]{\lambda''+1=\lambda'} let
$$
z\sb{\lambda''}~:=~\underset{s \in I\sb{\lambda''}}{\colim} \, y\sb{s}
$$
\noindent for \w[.]{I\sb{\lambda''} \subseteq I} Given \w{k\in\NN} and two elements
$\alpha$ and $\beta$ in \w{\pi\sb{k}\Map\sb{X}(h,z\sb{\lambda''})} with the same image
in \w[,]{\pi\sb{k}\Map\sb{\LWX}(h, x)} choose for each
\w{i\in I\sb{\lambda''}} a commutative diagram
$$
\xymatrix{
  \pi\sb{k}\Map\sb{\LWX}(h, y\sb{j}) \ar[r] \ar[d]\sb{\phi} &
  \pi\sb{k}\Map\sb{\LWX}(h, y\sb{(i, \alpha, \beta)}) \ar[r] &
  \pi\sb{k}\Map\sb{\LWX}(h, x) \\
  \pi\sb{k}\Map\sb{X}(h, z\sb{\lambda''})  & &
 }
$$
with \w[,]{j \in I\sb{\lambda''}} \w[,]{\phi(\alpha')=\alpha}
and \w{\phi(\beta') = \beta} for some \w{\alpha'} and \w{\beta'} whose images under the
left horizontal map are the same.

We then let \w{I\sb{\lambda'}} be the full subcategory of $I$ on objects
in \w{I\sb{\lambda''}} and those of the form \w{y\sb{(i, \alpha, \beta)}} above.
The colimit
$$
y = \underset{s \in I\sb{\lambda'}}{\colim} \, y\sb{s}
$$
is evidently $\lambda$-compact, since the set of elements \w{y\sb{(i, \alpha, \beta)}}
is $\lambda$-bounded\vsm.

\textbf{Case 2:} if \w{\lambda'} is a limit cardinal, put
\w[.]{z\sb{\lambda'}:=\underset{\lambda''<\lambda'}{\colim}\,z\sb{\lambda''}}

At each stage of the induction, we obtain a $\lambda$-compact object, and the object
$$
z~=~\underset{\lambda' < \lambda}{\colim}\,z\sb{\lambda'}
$$
is also $\lambda$-compact. Both of these statements follow from
\cite[Corollary 5.3.4.15]{LurieHTT}. It is easy to check that the canonical map
\w{z \to y} is an $H$-equivalence by the construction of \w{z\sb{\lambda'}}
and the fact that \w{\Map\sb{\LWX}(h, -)} commutes with $\lambda$-filtered colimits.
\end{proof}

\begin{thm}\label{thm2.7}
Given $X$ and \w{\LWX} as in Lemma \ref{lem2.6}, let $H$ be a small set of
objects of $X$ whose image in \w{\LWX} is connected. Then the localization map
\w{\LWX\to\LHX} is a right Bousfield localization.
\end{thm}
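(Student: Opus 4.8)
The plan is to exhibit the localization map $L\colon\LWX\to\LHX$ as a right Bousfield localization by producing a fully faithful \emph{left} adjoint $F\colon\LHX\to\LWX$. I would first dispose of full faithfulness by a purely formal device. Because the $H$-equivalences form an accessible class, detected by the set of functors $\Map_{\LWX}(h,-)$ for $h\in H$, the map $L$ is a reflective localization of the locally presentable $\LWX$, so it already carries a fully faithful \emph{right} adjoint $R$, namely the inclusion of the $H$-local objects of Definition \ref{def2.4}. In any adjoint triple $F\dashv L\dashv R$ the outer adjoint $F$ is fully faithful precisely when $R$ is; hence once a left adjoint $F$ is known to exist, its full faithfulness is automatic, and the whole problem reduces to constructing a left adjoint to $L$.

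By the Adjoint Functor Theorem of \cite{Raptis}, a functor between locally presentable quasi-categories admits a left adjoint as soon as it preserves small limits and is accessible. The accessibility of $L$ is part of the reflective structure, so the substantive point is that $L$ preserves limits, which I would verify through the reflection endofunctor $RL$. The unit $\eta_x\colon x\to RL(x)$ is an $H$-equivalence for every $x$, and since each $\Map_{\LWX}(h,-)$ preserves limits, the class of $H$-equivalences is closed under small limits. Thus for any diagram the comparison $\lim x_j\to\lim RL(x_j)$ is again an $H$-equivalence with $H$-local target, which identifies $\lim RL(x_j)$ with $RL(\lim x_j)$; hence $RL$ preserves limits, and since $R$ is fully faithful and limit-preserving, so does $L$.

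It remains to ensure that the hypotheses of \cite{Raptis} genuinely apply, i.e.\ that $\LHX$ is locally presentable with locally small mapping spaces and that $L$ is accessible. This is exactly what Lemma \ref{lem2.6} provides: its bounded-approximation statement, that every $H$-equivalence into a fixed object factors through an $H$-equivalence out of a $\lambda$-compact object, is the solution-set input that controls the size of the localized mapping spaces and keeps $L$ accessible. The hypothesis that the image of $H$ in $\LWX$ be connected enters precisely here, ensuring that the functors $\Map_{\LWX}(h,-)$ commute with the filtered colimits and coproducts used to assemble these $\lambda$-compact approximations. With local presentability and accessibility secured, \cite{Raptis} yields the left adjoint $F$, and the adjoint-triple principle of the first paragraph makes $F$ fully faithful, so that $L$ is a right Bousfield localization.

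The main obstacle is the local presentability of $\LHX$. A priori one has inverted a proper class of $H$-equivalences, and there is no formal reason the mapping spaces of the resulting quasi-category should remain small, or that the localization should stay accessible; without this, neither the reflective structure furnishing $R$ nor the Adjoint Functor Theorem furnishing $F$ can be invoked. The bounded-cofibration argument encapsulated in Lemma \ref{lem2.6} is what overcomes this difficulty, after which every step is a formal consequence of the limit-stability of $H$-equivalences and the Adjoint Functor Theorem.
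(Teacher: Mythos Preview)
Your argument has a genuine gap at the very first step: you assert that the localization $L\colon\LWX\to\LHX$ is already a \emph{reflective} localization, i.e.\ a left Bousfield localization with a fully faithful right adjoint $R$ given by the inclusion of $H$-local objects. The justification you offer\,---\,that the $H$-equivalences form an accessible class, being detected by the functors $\Map_{\LWX}(h,-)$\,---\,does not buy this. The standard presentable-localization results (e.g.\ \cite[Proposition~5.5.4.15]{LurieHTT}) require the class of inverted maps to be \emph{strongly saturated}, in particular stable under cobase change; the $H$-equivalences are stable under \emph{base} change (pullback), which is the opposite direction, and there is no reason to expect stability under pushout. So you do not know that $R$ exists, and without $R$ both pillars of your reduction collapse: the adjoint-triple trick for full faithfulness of $F$ evaporates, and your claim that ``accessibility of $L$ is part of the reflective structure'' becomes circular. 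Your limit-preservation argument is likewise routed through the endofunctor $RL$, so it too depends on the unproved $R$.

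The paper proceeds differently and avoids this trap. It never invokes a right adjoint. Full faithfulness of the prospective left adjoint comes for free from \cite[Proposition~7.11.2]{CisiH}, which says that any one-sided adjoint to a localization functor is automatically fully faithful; so the entire burden is producing \emph{some} left adjoint. For that the paper applies the general adjoint functor theorem of \cite{Raptis} in its form that does \emph{not} presuppose presentability of the target: one must check that $L$ is continuous, that $\LHX$ is locally small, and that the solution-set condition holds. Continuity is obtained by identifying $\LHX$ with the \emph{continuous localization} of $\LWX$ in the sense of \cite[Remark~7.7.10]{CisiH}, using that $H$-equivalences are closed under pullbacks and products. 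Local smallness of $\LHX$ and the solution-set condition are where Lemma~\ref{lem2.6} actually does its work: combined with the calculus-of-fractions description of mapping spaces \cite[\S 7.2]{CisiH}, it shows every class in $\pi_0\Map_{\LHX}(x,y)$ is represented by a span through a $\lambda$-compact object, and that the undercategories $\LHX_{y/}$ have small weakly initial sets. Your final paragraph gestures at Lemma~\ref{lem2.6} but does not carry out either of these verifications; that is precisely the content that needs to be supplied.
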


\begin{proof}
By \cite[Proposition 7.11.2]{CisiH}, it suffices to show that the localization map
has a right adjoint. By the version of the adjoint functor theorem from (\cite{Raptis}),
it suffices to show that this localization map is continuous, that \w{\LHX} is
locally small, and that it satisfies the solution set condition.

If we equip \w{\LWX} with the class of fibrations and weak
equivalences given by the dual of Example \ref{exam1.3}, then \w{\LHX} is exactly
the continuous localization of \w{\LWX} (in the sense of \cite[Remark 7.7.10]{CisiH}).
This follows from the fact that both pullbacks and products of $H$-equivalences are
$H$-equivalences.

To show this, by \cite[Proposition 7.10.1 \& Corollary 7.6.13]{CisiH} it suffices
to show that \w{\PPc(\LHX)} is locally small. In other words, we want to show that
\w{\pi\sb{0}\Map\sb{\LHX}(x, y)} is small for each \w[.]{x, y \in X}
The description of the mapping space obtained from the dual of
\cite[7.2.10(2), Remark 7.2.21]{CisiH} (in terms of the calculus of
fractions), together with Lemma \ref{lem2.5}, imply that each component
of \w{\Map\sb{\LWX}(x,y)} contains an object of the form \w[,]{x \leftarrow z \to y}
where $z$ is $\lambda$-compact in $\LLc (X)$ for some cardinal $\lambda$. But the
$\lambda$-compact objects of $X$ are essentially small. Thus the set of such
components is small.

We now verify the solution set condition from \cite{Raptis}. That is, we want to show
that \w{\LHX\sb{y/}} has a small, weakly initial set. Every object of \w{\LHX\sb{y/}}
admits a morphism  from an object \w{y \leftarrow z \rightarrow x} such that $z$ is
$\alpha$-bounded, as noted in the preceding paragraph.
On the other hand, every such morphism factors through \w[,]{y \leftarrow z\to x'}
where \w{x'} is $\alpha$-bounded. The set of all morphisms \w[,]{y \leftarrow z\to x'}
for $z$ and \w{x'} $\alpha$-bounded is essentially small.
Hence the solution set condition holds.
\end{proof}

%
%
\sect{Right Bousfield Localization}
\label{crbl}

In this section, we construct a version of right Bousfield localization with respect
to a set of objects $H$, both for a quasi-category equipped with cofibrations
(see Definition \ref{dqccw} below) and one equipped with fibrations.
This is not formally dual to the case of left Bousfield localization above,
since we are localizing with respect to a set of \emph{objects}, not morphisms.

\begin{defn} \label{dqccw}
A triple \w{(X, \Cof,\W)} is called a \emph{quasi-category with
  cofibrations and weak equivalences} if \w{(X\op, \Cof\op,\W\op)}
is a category with fibrations and weak equivalences as in Definition \ref{def1.2}.
\end{defn}

Our first example is obtained by dualizing Corollary \ref{cor1.6}:

\begin{lemma}\label{cor1.8}
Suppose that $X$ has countable colimits. Then \w{\cpX} (\S \ref{snac}) can be equipped
with the structure of a quasi-category with weak equivalences and cofibrations dual to
that of Theorem \ref{thm1.5}. We call this the \emph{Reedy structure} on \w[.]{\cpX}
\end{lemma}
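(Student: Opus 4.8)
The plan is to obtain this statement purely by dualization from Corollary \ref{cor1.6}, using the fact that the opposite of a quasi-category is again a quasi-category and that taking opposites interchanges limits with colimits, skeleta with coskeleta, and Reedy fibrations with the dual notion (Reedy cofibrations). First I would record the formal identification \w[,]{\cpX\op = (X\op)\sp{B(\rDop)} = \spn{}{(X\op)}} which uses \w{B(\rDel)\op = B(\rDop)} (the nerve commutes with passing to opposite categories). Thus a ``restricted cosimplicial object in $X$'' is literally the same data as a ``restricted simplicial object in \w[.]{X\op}'' Under this identification, the counting functor on the directed category \w{\rDop} governing \w{\spn{}{(X\op)}} is the one indexing latching objects in \w{\cpX} (the coskeleton filtration of $\rDel$ becomes the skeleton filtration of \w[),]{\rDop} so the limits \w{\partial y\sb{i}} of \S \ref{snac} computed in \w{X\op} are precisely the latching colimits in $X$, and a Reedy fibration in \w{\spn{}{(X\op)}} is exactly the map required to be a Reedy cofibration in the dual sense for \w[.]{\cpX}

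Next I would check the hypothesis: Corollary \ref{cor1.6} requires the quasi-category (there $X$) to have countable limits. Applying it to \w[,]{X\op} we need \w{X\op} to have countable limits, i.e.\ $X$ to have countable colimits, which is exactly the standing assumption of the lemma. (One should also note that \w{X\op} is again a quasi-category, which is immediate since the defining horn-filling condition \wref{eqqcat} is self-dual under \w[,]{(-)\op} as the inner horns \w{\Lambda\sb{k}\sp{n}} with \w{0<k<n} are preserved by the order-reversing automorphism of \w[.)]{\Deln{n}}

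Then Corollary \ref{cor1.6} endows \w{\spn{}{(X\op)}} with the structure of a quasi-category with weak equivalences and fibrations, where the fibrations are the Reedy fibrations and the weak equivalences are the levelwise weak equivalences. Transporting this structure back along the identification \w{\cpX\op \cong \spn{}{(X\op)}} and unwinding Definition \ref{dqccw}, we conclude that \w{\cpX} carries the structure of a quasi-category with cofibrations and weak equivalences, in which the cofibrations are the (dual) Reedy cofibrations and the weak equivalences are the levelwise ones; this is by definition the Reedy structure on \w[,]{\cpX} completing the proof. I do not expect any genuine obstacle here: the only point requiring a little care is the bookkeeping in the dualization dictionary --- verifying that ``Reedy fibration for the directed category indexing \w[''](X\op)\sp{B(\rDop)} really does unwind to ``Reedy cofibration'' for \w[,]{\cpX} i.e.\ that latching objects and the length filtration behave correctly under \w[.]{(-)\op} This is routine but is the step where a sign error, so to speak, would hide.
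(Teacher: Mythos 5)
Your proposal is correct and coincides with the paper's own (implicit) argument: the paper offers no separate proof, presenting the lemma simply as the dual of Corollary \ref{cor1.6}, and your identification \w{(\cpX)\op \simeq (X\op)\sp{B(\rDop)}} together with the observation that countable colimits in $X$ are countable limits in \w{X\op} is exactly that dualization. The bookkeeping you flag (matching-object limits in \w{X\op} unwinding to latching-object colimits in $X$, so Reedy fibrations there become the dual Reedy cofibrations in \w[)]{\cpX} is correct and is all that needs checking.
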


In this section we shall assume that $X$ is locally
presentable, and fix a collection $H$ of cofibrant objects in $X$, each of
which is compact and connected as an object of \w[.]{\LWX} We wish to endow $X$
with a new structure of a category of cofibrations and weak equivalences, in which the
weak equivalences are the $H$-equivalences; we do so by mimicking Barwick's
construction of right Bousfield localizations in \cite{BarwL}.

\begin{assume}\label{ass6.1}
We assume for simplicity a few additional properties for \w[:]{(X,\Cof,\W)}
\begin{enumerate}
\renewcommand{\labelenumi}{(\arabic{enumi})~}
\item \w{X \to \LWX} preserves transfinite composites of cofibrations of
 cofibrant objects.
\item The structure is cofibrantly generated in that each (trivial) cofibration
  can be written as a transfinite composite of pushouts of a small set of
  (trivial) cofibrations with cofibrant domain.
\item The set of weak equivalences satisfies (3) of \ref{ass3.1}
\end{enumerate}
\end{assume}

These are fairly mild assumptions. For instance, if we look at the
structure given by \ref{exam1.4} on the nerve of the cofibrant objects of a model
category assumption (1) is satisfied by \cite[Proposition 17.9.1]{HirM}.
Property (2) and (3) are enjoyed, for instance by the structure from \S \ref{examxxx},
as an excellent model category is cofibrantly generated by definition.

For each \w[,]{h \in H} let \w{\Lu(h)} be a cofibrant replacement for the constant
cosimplicial object on $h$ (in the Reedy structure on \w[),]{\cpX} and let
$$
I\sb{H}~:=~\{ L\sb{p}\Lu(h) \to\Lambda\sp{p}(h)\}\sb{h\in H,p\in\NN} \cup J~,
$$
where $J$ is a set of generating trivial cofibrations for \w{(X,\Cof,\W)}
with cofibrant domains, and \w{L\sb{p}} is the $p$-th latching object (see
\cite[VII, \S 4]{GJarS}).

\begin{defn}\label{def6.2}
A map in $X$ is called an \emph{$H$-cofibration} if it can be written as a transfinite
composite of pushouts of elements of \w[.]{I\sb{H}} We call a map an
\emph{$H$-trivial cofibration} if it is both an $H$-cofibration and an $H$-equivalence.
\end{defn}

\begin{defn}\label{dhcol}
We call an object \w{x \in X} \emph{$H$-colocal} if for each $H$-equivalence
\w{f : y \to z} the induced map \w{\Map\sb{\LWX}(x, y)\to\Map\sb{\LWX}(x,z)}
is an equivalence.
\end{defn}

\begin{lemma}\label{lem6.4}
$H$-equivalences between $H$-colocal objects in $X$ are weak equivalences.
\end{lemma}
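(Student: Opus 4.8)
The plan is to reduce the statement to the saturation of $\W$: by Assumption \ref{ass6.1}(3) (which invokes Assumption \ref{ass3.1}(3)) a morphism of $X$ lies in $\W$ as soon as its image under the localization map $X\to\LWX$ is invertible, and a morphism of a quasi-category is invertible precisely when its class in the homotopy category is an isomorphism. So, given an $H$-equivalence $f\colon x\to y$ between $H$-colocal objects, it will be enough to exhibit a two-sided homotopy inverse for (the image of) $f$ in $\LWX$.

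To obtain a one-sided inverse I would feed $f$ itself into Definition \ref{dhcol}: since $y$ is $H$-colocal and $f$ is an $H$-equivalence, the induced map $f\sb{\ast}\colon\Map\sb{\LWX}(y,x)\to\Map\sb{\LWX}(y,y)$ is a weak equivalence of spaces, in particular a bijection on $\pi\sb{0}$, so I may choose $s\colon y\to x$ with $f\circ s\simeq\id\sb{y}$. For the other side I run the same argument on the source: since $x$ is $H$-colocal, $f\sb{\ast}\colon\Map\sb{\LWX}(x,x)\to\Map\sb{\LWX}(x,y)$ is a weak equivalence, hence injective on $\pi\sb{0}$; and since $f\circ(s\circ f)\simeq(f\circ s)\circ f\simeq f\simeq f\circ\id\sb{x}$, injectivity forces $s\circ f\simeq\id\sb{x}$. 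Hence $[f]$ is an isomorphism in $\ho\LWX$, the image of $f$ in $\LWX$ is an equivalence, and $f\in\W$ by saturation.

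I do not anticipate a real obstacle; the argument is formal and parallels the proof of Lemma \ref{lem3.7}. The two points to keep straight are that $H$-colocality is a condition on the images of the objects in $\LWX$ — so every mapping space and composite above is computed there — and that using $f$ as the test $H$-equivalence in Definition \ref{dhcol} is legitimate exactly because $f$ is assumed to be an $H$-equivalence. (More conceptually, one could instead invoke Theorem \ref{thm2.7}: the $H$-colocal objects are the essential image of the fully faithful left adjoint of $\LWX\to\LHX$, and under that embedding $f$ corresponds to the equivalence it induces in $\LHX$, whence $f$ is an equivalence in $\LWX$; but the direct argument makes appeal to the right Bousfield localization unnecessary.)
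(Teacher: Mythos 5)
Your argument is correct, but it is not the route the paper takes: the paper proves this lemma in essentially one line by invoking Theorem \ref{thm2.7}, which shows that $\LWX\to\LHX$ is a right Bousfield localization and hence admits a fully faithful left adjoint $\phi$; an $H$-equivalence between $H$-colocal objects is then, up to equivalence, the image under $\phi$ of an equivalence of $\LHX$, so it is already an equivalence in $\LWX$, and saturation (Assumption \ref{ass6.1}(3)) finishes the proof exactly as in your last step. Your proof replaces the appeal to Theorem \ref{thm2.7} by a direct mapping-space argument: colocality of $y$ applied to the $H$-equivalence $f$ gives surjectivity of $f_{\ast}$ on $\pi_{0}\Map_{\LWX}(y,-)$ and hence a right inverse $s$ in $\ho\LWX$, and colocality of $x$ gives injectivity on $\pi_{0}\Map_{\LWX}(x,-)$, which upgrades $s$ to a two-sided inverse, so $[f]$ is invertible in $\ho\LWX$ and saturation applies. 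This is sound (the only point to keep in mind, which you do, is that $s$ is a morphism of $\LWX$ rather than of $X$, and that is all that is needed), and it parallels the classical argument that local equivalences between local objects are equivalences; its advantage is that it is independent of the adjoint-functor-theorem machinery underlying Theorem \ref{thm2.7} --- indeed your parenthetical correctly identifies the paper's actual proof as the ``more conceptual'' alternative you chose not to use. What the paper's approach buys is brevity and uniformity with the proof of Lemma \ref{lem3.7}; what yours buys is a self-contained argument that would survive even if one had not yet established the right Bousfield localization.
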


\begin{proof}
The $H$-equivalences between $H$-colocal objects are in the essential image of
\w[,]{\J\LH(X) \subseteq\LH(X) \xrightarrow{\phi} \LWX} where $\phi$ is
the left adjoint of the localization map, whose existence is guaranteed by
Theorem \ref{thm2.7}. Thus, $H$-equivalences between $H$-local objects represent
equivalences in \w[.]{\LWX} They are therefore weak equivalences by
Assumption \ref{ass6.1}.
\end{proof}

\begin{lemma}\label{lem6.5}
Suppose that $x$ is $H$-cofibrant. Then a map \w{f : x \to y} in $X$ is an $H$-trivial cofibration
if and only if it is a trivial cofibration.
\end{lemma}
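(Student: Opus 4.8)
The plan is to prove the two implications separately, using the cofibrantly-generated description of $H$-cofibrations from Definition \ref{def6.2} together with Lemma \ref{lem6.4}; recall that $x$ being $H$-cofibrant entails in particular that $x$ is cofibrant and $H$-colocal (Definition \ref{dhcol}), and that every map in $\W$ is automatically an $H$-equivalence (the remark following Definition \ref{def2.4}). For the easy direction, suppose $f\colon x\to y$ is a trivial cofibration. Being a weak equivalence, it is an $H$-equivalence; and by Assumption \ref{ass6.1}(2) it may be written as a transfinite composite of pushouts of the generating trivial cofibrations $J\subseteq I\sb{H}$, hence is an $H$-cofibration by Definition \ref{def6.2}. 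So $f$ is an $H$-trivial cofibration. (This direction does not use that $x$ is $H$-cofibrant.)

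For the converse, let $f\colon x\to y$ be an $H$-cofibration and an $H$-equivalence, with $x$ $H$-cofibrant. First I would note that $f$ is a cofibration: it is a transfinite composite of pushouts of elements of $I\sb{H}$, and each such element is a cofibration — the maps in $J$ are trivial cofibrations, while $L\sb{p}\Lu(h)\to\Lambda\sp{p}(h)$ is a latching map of the Reedy cofibrant object $\Lu(h)$ (see \cite[VII, \S 4]{GJarS}) — so $f$ is a cofibration, and $y$ is cofibrant. It then remains to show that $y$ is $H$-colocal: granting this, $f$ is an $H$-equivalence between $H$-colocal objects, hence a weak equivalence by Lemma \ref{lem6.4}, and therefore a trivial cofibration, as required.

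To show $y$ is $H$-colocal I would induct transfinitely along a cell presentation $x=y\sb{0}\to\cdots\to y\sb{\lambda}=y$ of $f$ as a composite of pushouts of elements of $I\sb{H}$, with each $y\sb{\alpha}$ cofibrant, using two facts about $H$-colocal objects. They are invariant under weak equivalence, since by saturation (Assumption \ref{ass6.1}(3)) a weak equivalence becomes an equivalence in $\LWX$; and they are closed under homotopy colimits, since for a homotopy colimit $c$ of a diagram $D$ of $H$-colocal objects one has $\Map\sb{\LWX}(c,-)\simeq\holim\Map\sb{\LWX}(D,-)$, and a homotopy limit of functors carrying $H$-equivalences to equivalences does the same. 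At a successor stage, $y\sb{\alpha}\to y\sb{\alpha+1}$ is a pushout along a cofibration of cofibrant objects, hence a homotopy pushout, which remains one in $\LWX$: if the attached map lies in $J$ then $y\sb{\alpha+1}\simeq y\sb{\alpha}$ is $H$-colocal, and if it is $L\sb{p}\Lu(h)\to\Lambda\sp{p}(h)$ then $y\sb{\alpha+1}$ is a homotopy pushout of the $H$-colocal objects $y\sb{\alpha}$, $\Lambda\sp{p}(h)\simeq h\in H$ and $L\sb{p}\Lu(h)$ — the last being the latching colimit of the Reedy cofibrant $\Lu(h)$, itself a homotopy colimit of copies of $\Lambda\sp{k}(h)\simeq h$. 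At a limit stage, $y\sb{\alpha}=\colim\sb{\beta<\alpha}y\sb{\beta}$ is a transfinite composite of cofibrations of cofibrant objects, which $X\to\LWX$ preserves by Assumption \ref{ass6.1}(1), so it is a homotopy colimit of $H$-colocal objects in $\LWX$. Hence $y=y\sb{\lambda}$ is $H$-colocal.

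I expect the main obstacle to be exactly this inheritance of $H$-colocality along $H$-cofibrations. It requires knowing that the pushouts and transfinite composites occurring in a cell presentation compute homotopy colimits in $\LWX$ (which is where Assumption \ref{ass6.1}(1) and the cofibrant-generation hypotheses are used), together with the fact that each generating $H$-cofibration $L\sb{p}\Lu(h)\to\Lambda\sp{p}(h)$ has both source and target $H$-colocal — and the latter hinges on identifying the latching objects of the Reedy cofibrant replacement $\Lu(h)$ as homotopy colimits built out of $h$.
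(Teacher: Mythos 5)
Your proof is correct and follows essentially the same route as the paper's: the easy direction via $J\subseteq I\sb{H}$, and the converse by showing the objects involved are $H$-colocal (using that $\Map\sb{\LWX}(-,z)$ turns the colimits preserved by $X\to\LWX$ into limits, via Assumption \ref{ass6.1}(1) and the dual of \cite[Theorem 7.5.18]{CisiH}), then invoking Lemma \ref{lem6.4} and the fact that all elements of $I\sb{H}$ are cofibrations. Your transfinite induction merely spells out details the paper leaves implicit (e.g.\ handling cells attached along maps of $J$, and checking that the (co)domains of the generating maps $L\sb{p}\Lu(h)\to\Lambda\sp{p}(h)$ are $H$-colocal).
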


\begin{proof}
If $f$ is a trivial cofibration it is an $H$-cofibration, since
\w[.]{J \subseteq I\sb{H}} It is also an $H$-equivalence, since every weak equivalence
is an $H$-equivalence.

Conversely, suppose that \w{f : x \to y} is an $H$-trivial cofibration.
Then both objects $x$ and $y$ can be written as colimits of transfinite composites of
pushouts of maps in \w[,]{I\sb{H}} and thus transfinite composites of pushouts of
$H$-colocal maps. We claim that $x$ and $y$ are thus $H$-colocal.

Indeed, the functorial mapping space \w{\MAP\sb{\LWX}(-, y)} sends colimits to limits
(see the discussion in the preceding section). Thus, colimits preserve $H$-colocal
objects of \w[.]{\LWX} But the localization \w{X \to \LWX} preserves pushouts
by cofibrations by the dual of \cite[Theorem 7.5.18]{CisiH} and also preserves
transfinite composites of cofibrations by (1) of \ref{ass6.1}. Thus,
$x$ and $y$ are $H$-colocal.

But $H$-equivalences between $H$-colocal objects are
weak equivalences, so $f$ is a weak equivalence. The elements of \w{I\sb{H}} are
all cofibrations, so we conclude that an $H$-cofibration is a cofibration.
\end{proof}

\begin{lemma}\label{lem6.6}
Let \w{f : x \to y} be a map with $H$-cofibrant source. Then we can factor it as
an $H$-cofibration followed by an $H$-equivalence.
\end{lemma}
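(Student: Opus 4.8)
The plan is to run a small object argument for the set \w{I\sb{H}} of generating \ww{H}-cofibrations, and then to recognize the maps with the right lifting property against \w{I\sb{H}} as \ww{H}-equivalences. Since $X$ is locally presentable it is cocomplete, and every object is \ww{\lambda}-compact for some regular cardinal $\lambda$ (Lemma \ref{lem2.5}(1)); we may also arrange that every domain of a map in \w{I\sb{H}} is \ww{\lambda}-compact, and then the small object argument factors \w{f} as \w[,]{x\xra{i}w\xra{p}y} with $i$ a transfinite composite of pushouts of maps in \w{I\sb{H}} --- hence an \ww{H}-cofibration by Definition \ref{def6.2} --- and $p$ having the right lifting property against \w[.]{I\sb{H}} Since $x$ is \ww{H}-cofibrant, hence cofibrant, and every map of \w{I\sb{H}} has cofibrant domain, cofibrancy is preserved at each step, so that $w$ is in fact \ww{H}-cofibrant as well.

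It remains to verify that $p$ is an \ww{H}-equivalence. Fix \w[.]{h\in H} The latching maps \w{L\sb{p}\Lu(h)\to\Lambda\sp{p}(h)} all lie in \w[,]{I\sb{H}} so $p$ lifts against each of them. Because \w{\Lu(h)} is a Reedy cofibrant replacement of the constant (restricted) cosimplicial object on $h$, it is a cosimplicial resolution of the cofibrant object $h$, so the derived function complex \w{\Map\sb{\LWX}(h,-)} is computed via \w[.]{\Lu(h)} Under the attendant tensor-hom adjunction, the lifting condition above is precisely the assertion that \w{\Map\sb{\LWX}(h,w)\to\Map\sb{\LWX}(h,y)} has the right lifting property against every boundary inclusion \w[,]{\partial\Deln{p}\hra\Deln{p}} i.e.\ is a trivial fibration, in particular a weak equivalence. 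As \w{h\in H} was arbitrary, $p$ is an \ww{H}-equivalence, and \w{x\xra{i}w\xra{p}y} is the required factorization. (The generating trivial cofibrations $J$, which also lie in \w[,]{I\sb{H}} play no role in this lemma, but ensure, as in the model-category case, that $p$ is in addition a fibration.)

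The principal difficulty is justifying, in the present framework of Cisinski's quasi-categories with cofibrations and their Reedy structures (Lemma \ref{cor1.8}) --- rather than in an honest model category --- that a Reedy cofibrant cosimplicial resolution \w{\Lu(h)} of a cofibrant object $h$ computes the derived function complex \w[,]{\Map\sb{\LWX}(h,-)} and hence that the combinatorial lifting property against the latching maps \w{\{L\sb{p}\Lu(h)\to\Lambda\sp{p}(h)\}\sb{p}} translates into the weak equivalence of function complexes asserted above. This resolution formalism is classical over model categories (see \cite{HirM} and \cite{BarwL}); I would first isolate the cosimplicial-resolution statement I need as a preliminary lemma, or derive it from the mapping-space calculus of \cite{CisiH}, and only then carry out the factorization argument.
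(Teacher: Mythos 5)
Your overall skeleton is the same as the paper's: run the small object argument on $I_H$ (using local presentability and $\lambda$-compactness) to factor $f$ as an $H$-cofibration followed by a map with the right lifting property against the generating maps, and then argue that this second map is an $H$-equivalence by testing against mapping spaces out of $h\in H$. The divergence, and the genuine gap, is in the second step. You convert the lifting property against the latching maps $L_p\Lambda^\bullet(h)\to\Lambda^p(h)$ into the assertion that $\Map_{\mathcal{L}_{\mathcal{W}}(X)}(h,w)\to\Map_{\mathcal{L}_{\mathcal{W}}(X)}(h,y)$ is a trivial fibration, via the claim that the Reedy cofibrant cosimplicial resolution $\Lambda^\bullet(h)$ computes the derived function complex. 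But the lifting property produced by the small object argument is only available for lifting problems whose data live in $X$, whereas the $H$-equivalence condition concerns $\Map_{\mathcal{L}_{\mathcal{W}}(X)}(h,-)$, i.e.\ maps $\partial\Deln{n}\otimes h\to w$ and $\Deln{n}\otimes h\to y$ taken in the localization, which need not be in the image of $X\to\mathcal{L}_{\mathcal{W}}(X)$. So the adjunction step you call ``precisely the assertion that\dots'' is exactly the point at which something must be proved, and you acknowledge this by deferring it to an unproven preliminary lemma; as it stands the proof is incomplete at the step where all the real work lies. (Two smaller points: the trivial-fibration conclusion is stronger than what the lifting data in $X$ can give directly, and the parenthetical claim that including $J$ makes $p$ a fibration is neither needed nor established in this framework.)

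For comparison, the paper does not prove a general ``resolutions compute mapping spaces'' statement. Instead it argues element-wise: it explicitly notes that the construction only solves lifting problems posed in $X$, and then uses Cisinski's calculus-of-fractions description of $\mathcal{L}_{\mathcal{W}}(X)$ to factor the horizontal maps of an arbitrary lifting square in $\mathcal{L}_{\mathcal{W}}(X)$ through objects coming from $X$ with $H$-cofibrant domains, thereby reducing to lifting problems against elements of $I_H$. With that in hand it checks that $\pi_0\Omega^n\Map_{\mathcal{L}_{\mathcal{W}}(X)}(h,z)\to\pi_0\Omega^n\Map_{\mathcal{L}_{\mathcal{W}}(X)}(h,y)$ is a bijection for all $n$: surjectivity by lifting a sphere $\sigma$, injectivity by lifting a nullhomotopy $\gamma$. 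If you want to complete your version, you either need to carry out an analogue of this reduction (which is where \cite[\S 7.2]{CisiH} enters), or actually prove the cosimplicial-resolution lemma you invoke in the quasi-category-with-cofibrations setting; citing the model-category literature (\cite{HirM}, \cite{BarwL}) does not cover the present situation, which is the very reason the paper's proof takes the more hands-on route.
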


\begin{proof}
By Lemma \ref{lem2.5}, each object of $H$ is $\lambda$-compact for some cardinal
$\lambda$.
By a small object argument of size \w{\lambda' > \lambda} for some regular
\w[,]{\lambda'} we can factor $f$ as \w[,]{x \xrightarrow{g} z \xrightarrow{g'} y}
where $g$ is an H-cofibration, and the map \w{g'} has the right
lifting property with respect to \w[.]{h \otimes\partial\Deln{n} \to h\otimes\Deln{n}}

We now want to show that the map $g'$ induces an $H$-equivalence.
To do this, we will show that we can solve all lifting problems
\mydiagram[\label{eqliftsq}]{
\partial\Deln{n} \otimes h \ar[r]\sp(0.5){q} \ar[d] & X \ar[d]\sp{g'} \\
\Deln{n} \otimes h \ar[r]\sb(0.5){r} \ar@{.>}[ur] & Y
}
in \w[.]{\LWX} Note that the construction only shows that we can solve such lifting
problems in $X$.

We can write the horizontal maps in \wref{eqliftsq} as composites \w{q'\circ q''} and
\w[,]{r' \circ r''} where \w{q'} and \w{r'} are in the image of the localization
map \w{X\to\LWX} (see \cite[\S 7.2]{CisiH}), so it suffices to solve a lifting problem:
$$
\xymatrix
{
\partial\Deln{n} \otimes h \ar[r]_{q''} \ar[d] & w \ar[r]_{q'} &  X \ar[d]_{g'} \\
\Deln{n} \otimes h \ar[r] \ar[r]_{r''} & w \ar@{.>}[ur]  \ar[r]_{r'} & Y.
}
$$
\noindent Without loss of generality, we can assume \w{q'} and \w{r'} have
$H$-cofibrant domains, so we reduce to solving lifting problems involving elements
of \w[,]{I\sb{H}} which we can solve by the construction of \w[.]{g'}

For each \w{h \in H} and \w[,]{\sigma \in \Omega\sp{n}\Map\sb{\LW (X)}(h, z)}
we can find lifts in the diagrams (by the preceding discussion)
$$
\xymatrix
{
h \otimes \partial\Deln{n} \ar[r]\sb{0} \ar[d] & y \ar[d]\sb{g'} \\
h \otimes\Deln{n} \ar@{.>}[ur] \ar[r]\sb{\sigma} & z
}
$$
On the other hand, given \w{\sigma'  \in \Omega\sp{n}\Map\sb{\LWX}(h, z)}
with a nullhomotopy of \w{g'\circ\sigma'} given by $\gamma$ below, we can find a lift in
the diagram
$$
\xymatrix
{
h \otimes \partial \Deln{n+1} \ar[r]\sb{\sigma'} \ar[d] & y \ar[d]\sp{g'} \\
h \otimes \Deln{n+1} \ar[r]\sb{\gamma} \ar@{.>}[ur] & z.
}
$$
\noindent We conclude that
\w{\pi\sb{0}\Omega\sp{n}\Map\sb{\LWX}(h, z)\to\pi\sb{0}\Omega\sp{n}\Map\sb{\LWX}(h, y)}
is a bijection.
\end{proof}

\begin{thm}\label{thm6.7}
The $H$-equivalences and $H$-cofibrations give $X$ the structure of a quasi-category
with weak equivalences and cofibrations
\end{thm}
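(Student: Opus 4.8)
The strategy is to dualize the structure of Definition \ref{def1.2} (for cofibrations, via Definition \ref{dqccw}) and verify each axiom in turn, with the preceding lemmas doing most of the work. First I would check that the $H$-cofibrations form a class of cofibrations in the sense dual to \ref{def1.1}: they contain all identities and are closed under composition because a transfinite composite of pushouts of elements of $I\sb{H}$ composed with another such is again of that form; the dual of condition (2) (existence of pushouts of cofibrant objects) holds because $X$ is locally presentable, hence cocomplete; and the dual of condition (3) (pushouts of $H$-cofibrations between $H$-cofibrant objects along maps with $H$-cofibrant target are $H$-cofibrations) is immediate from the closure of the class $I\sb{H}$-cellular maps under pushout. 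Next, the $H$-equivalences form a subcategory satisfying $2$-out-of-$3$: this is automatic since by Theorem \ref{thm2.7} the $H$-equivalences are exactly the maps inverted by the localization $\LWX\to\LHX$, and ``being inverted by a functor'' has $2$-out-of-$3$.

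The two substantive axioms are the dual of Definition \ref{def1.2}(1) (stability of trivial cofibrations under cobase change along maps with $H$-cofibrant source, within the $H$-cofibrant objects) and the dual of Definition \ref{def1.2}(2) (factorization of any map with $H$-cofibrant source as an $H$-trivial cofibration followed by an $H$-cofibration — wait, dually: as an $H$-cofibration followed by an $H$-equivalence). The factorization axiom is precisely Lemma \ref{lem6.6}, so that is already in hand. For the pushout-stability axiom, the key input is Lemma \ref{lem6.5}: a map out of an $H$-cofibrant object is an $H$-trivial cofibration if and only if it is a genuine trivial cofibration (in the original structure $(X,\Cof,\W)$). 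So I would reduce the $H$-statement to the corresponding statement in $(X,\Cof,\W)$: given a pushout square with $H$-cofibrant corners where one leg is an $H$-trivial cofibration, that leg is an honest trivial cofibration by \ref{lem6.5}; its pushout is then an honest trivial cofibration by the dual of Definition \ref{def1.2}(1) applied to the original structure (noting that $X$ being locally presentable, the relevant pushout exists and the pushed-out object is again $H$-cofibrant, since pushouts of $H$-cofibrations are $H$-cofibrations); and finally, being an honest trivial cofibration with $H$-cofibrant source, it is an $H$-trivial cofibration by \ref{lem6.5} again.

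The main obstacle I anticipate is the bookkeeping around $H$-cofibrancy hypotheses: several of the lemmas (\ref{lem6.5}, \ref{lem6.6}) require the source to be $H$-cofibrant, and one must check this hypothesis propagates correctly through pushouts and through the factorization — in particular that in the pushout square the new object $y\cup\sb{x}z$ is $H$-cofibrant whenever $x,z$ are and $x\to z$ is an $H$-cofibration, and that the middle object in the factorization $x\to w\to y$ of \ref{lem6.6} is $H$-cofibrant. Both follow from the fact that $H$-cofibrations are closed under pushout and that an $H$-cofibrant object is, by definition, one receiving an $H$-cofibration from an initial (hence $H$-cofibrant, being a transfinite composite of length zero) object — but this needs to be stated cleanly. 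A secondary point to be careful about is that the factorization in \ref{lem6.6} gives $g'$ an $H$-equivalence rather than an $H$-trivial cofibration, which is exactly the dual form of \ref{def1.2}(2) as stated, so no further work is needed there. Once these hypotheses are tracked, the theorem assembles immediately from Lemmas \ref{lem6.4}--\ref{lem6.6}.
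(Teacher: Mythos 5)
Your proposal is correct and follows essentially the same route as the paper: the class-of-cofibrations axioms are handled by the standard cellular/transfinite-composite argument (the paper cites Barwick's Proposition 5.6), the factorization axiom is exactly Lemma \ref{lem6.6}, and the cobase-change axiom is reduced via Lemma \ref{lem6.5} to pushout-stability of genuine trivial cofibrations in $(X,\Cof,\W)$. Your extra bookkeeping about propagation of $H$-cofibrancy through pushouts and the factorization is a reasonable elaboration of details the paper leaves implicit, not a different argument.
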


\begin{proof}
The $H$-cofibrations form a class of cofibrations by a standard argument
(see \cite[Proposition 5.6]{BarwL}). The dual of (2) of \ref{def1.2} is just
Lemma \ref{lem6.6}. The dual of Condition (1) of \ref{def1.2} follows from the fact that
the $H$-trivial cofibrations with $H$-cofibrant source are precisely the trivial
cofibrations, and that pushouts of trivial cofibrations are trivial cofibrations.
\end{proof}

We also have a version of right Bousfield localization with respect to a class of
objects in a quasi-category with fibrations, which is much easier to establish:

\begin{thm}\label{thm4.10}
Suppose that \w{(X,\Fib,\W)} is a quasi-category with fibrations. Then we can equip $X$
with the structure of a quasi-category with fibrations and weak equivalences in which
the weak equivalences are the $H$-equivalences and the fibrations are those
of \w[.]{\Fib}
\end{thm}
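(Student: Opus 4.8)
The plan is to verify the three axioms of Definition \ref{def1.2} directly, exploiting the fact that the fibrations are unchanged and only the class of weak equivalences has been enlarged from $\W$ to the $H$-equivalences. First I would record that the $H$-equivalences form a subcategory satisfying $2$-out-of-$3$: this is immediate from Definition \ref{def2.4}, since $f$ is an $H$-equivalence precisely when each $\Map\sb{\LWX}(h,-)$ sends $f$ to a weak equivalence of spaces, and weak equivalences of spaces satisfy $2$-out-of-$3$. The class $\Fib$ is already a class of fibrations by hypothesis, so properties (2) and (3) of Definition \ref{def1.1} are inherited verbatim; nothing needs to be checked there.

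The real content is in the two numbered conditions of Definition \ref{def1.2}. For condition (1), suppose we are given a pullback square as in \wref{eqpbd} with $y$, $z$, $w$ fibrant and $f$ an $H$-equivalence that is also a fibration; we must show $f'$ is an $H$-equivalence and a fibration. That $f'$ is a fibration is exactly axiom (3) of Definition \ref{def1.1} for $\Fib$ applied to the pullback. For the $H$-equivalence part, I would push the square forward along the localization map $X\to\LWX$: by \cite[Theorem 7.5.18]{CisiH} the image is again a pullback in $\LWX$, and applying $\Map\sb{\LWX}(h,-)$ (which preserves pullbacks, being a limit) for each $h\in H$ reduces the claim to the statement that in spaces, the pullback of a weak equivalence along a fibration is a weak equivalence — true in the Kan model structure. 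Hence $\Map\sb{\LWX}(h,x)\to\Map\sb{\LWX}(h,z)$ is a weak equivalence, i.e.\ $f'$ is an $H$-equivalence.

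For condition (2), let $f:x\to y$ be a map with $y$ fibrant; I would first factor $f$ through $\W$ and $\Fib$ using Definition \ref{def1.2}(2) for the original structure $(X,\Fib,\W)$, obtaining $x\xrightarrow{w}x'\xrightarrow{p}y$ with $w\in\W$ and $p\in\Fib$. Since every weak equivalence in $\W$ is an $H$-equivalence (it maps to an equivalence in $\LWX$, so each $\Map\sb{\LWX}(h,-)$ sends it to an equivalence), this is already a factorization of $f$ as an $H$-equivalence followed by a fibration. The main obstacle here is just bookkeeping — making sure the intermediate object $x'$ is legitimately fibrant when needed; but since $p:x'\to y$ is a fibration and $y$ is fibrant, $x'$ is fibrant, so there is no issue. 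This completes the verification of all the axioms, and the theorem follows. The proof is genuinely easy, as the preamble to the statement promises: the only substantive input is the stability of $H$-equivalences under pullback along fibrations, and that is a formal consequence of \cite[Theorem 7.5.18]{CisiH} together with the limit-preservation of mapping spaces — the same argument already used in Lemma \ref{lemexact} and Lemma \ref{lem3.9}.
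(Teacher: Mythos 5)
Your proof is correct and follows essentially the same route as the paper's: condition (1) of Definition \ref{def1.2} via stability of $H$-equivalences under pullback along fibrations (using that localization preserves such pullbacks and that $\Map\sb{\LWX}(h,-)$ preserves limits), and condition (2) by reusing the factorization of $(X,\Fib,\W)$ since every map in $\W$ is an $H$-equivalence. You simply spell out the details (2-out-of-3, the appeal to \cite[Theorem 7.5.18]{CisiH}, fibrancy bookkeeping) that the paper's two-sentence proof leaves implicit.
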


\begin{proof}
Mapping space preserve limits, so $H$-equivalences are preserved under pullback.
Thus (1) of \ref{def1.2} holds. (2) is immediate from the factorization axiom for
\w[,]{(X,\W,\Fib)} since a weak equivalence is in particular an $H$-equivalence.
\end{proof}

%
%
\sect{The spectral sequence of a simplicial object}
\label{csssso}

Assume given a complete quasi-category with fibrations and weak equivalences
\w{\lra{X,\Fib,\W}} as in Definition \ref{def1.2}, satisfying Assumptions \ref{ass3.1}.
Given a homotopy cogroup object $\fy$ in $X$ (or more generally, in any
suitable version of an \wwb{\infty,1}category), we explained in \cite{BMeadS} how to
associate to a simplicial object \w{\xd} in $X$ its \emph{homotopy spectral sequence},
and provided a homotopy-invariant characterization of the differentials,
independent of the model of $\infty$-categories chosen.  For this purpose, we first
require:

\begin{lemma}\label{lem8.2}
There is an adjunction of quasi-categories
$$
\fy\otimes (-):\Ss \leftrightarrows X :\MAP\sb{X}(\fy, -)~,
$$
\noindent where the tensoring $\otimes$ of a locally presentable quasi-category
over simplicial sets is that given in \cite[Section 4.4.4]{LurieHTT}.
\end{lemma}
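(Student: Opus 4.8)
The plan is to obtain the desired adjunction by transporting the canonical cotensoring/tensoring over $\sSet$ that exists on any locally presentable quasi-category along the cogroup structure, or more directly, by invoking the general theory of tensorings from \cite[Section 4.4.4]{LurieHTT}. First I would recall that since $X$ is locally presentable (Assumptions \ref{ass3.1}(2)), it is in particular cocomplete and admits all small colimits; by \cite[Corollary 4.4.4.9]{LurieHTT} it is therefore canonically tensored over $\sSet$, with the tensoring $K \otimes (-) : \sSet \times X \to X$ characterized by the natural equivalence $\Map_{X}(K \otimes x, y) \simeq \Map_{\sSet}(K, \Map_{X}(x, y))$ of mapping spaces. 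Fixing the object $\fy \in X$, I would then specialize: define $\fy \otimes (-) : \sSet \to X$ to be $K \mapsto K \otimes \fy$, and define $\MAP_{X}(\fy, -) : X \to \sSet$ to be the functor corepresented in the second variable by the tensoring, i.e.\ the composite of $\Map_{X}(\fy, -)$ with the fibrant-replacement needed to land in Kan complexes (or simply the mapping-space functor of the quasi-category $X$, valued in $\sSet$).

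The key steps, in order, are: (i) verify that $\fy \otimes (-)$ is well-defined and colimit-preserving — this is immediate from the fact that tensoring with a fixed simplicial set commutes with colimits in $X$, which is part of the definition of a tensoring in \cite[\S 4.4.4]{LurieHTT}; (ii) produce the adjunction unit/counit, or equivalently exhibit the natural equivalence $\Map_{X}(K \otimes \fy, y) \simeq \Map_{\sSet}(K, \MAP_{X}(\fy, y))$; and (iii) check that this equivalence is natural in both $K \in \sSet$ and $y \in X$, so that it assembles into an adjunction of quasi-categories in the sense of \cite[\S 5.2]{LurieHTT}. Step (ii) is really just the defining property of the tensoring with the variable $x$ frozen at $\fy$, and step (iii) follows because the tensoring $\sSet \times X \to X$ is a bifunctor and the mapping-space construction is functorial; one may also simply cite \cite[Remark 4.4.4.10]{LurieHTT} or the adjoint functor theorem \cite[Corollary 5.5.2.9]{LurieHTT} applied to the colimit-preserving functor $\fy \otimes (-)$ between presentable quasi-categories.

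The main obstacle, such as it is, is a bookkeeping one: one must be careful that the target of $\MAP_{X}(\fy, -)$ is genuinely $\sSet$ (all simplicial sets, or equivalently the nerve of a suitable model) rather than just the homotopy category, and that the tensoring referenced from \cite[\S 4.4.4]{LurieHTT} agrees with the one implicitly used elsewhere in \cite{BMeadS}; this is a matter of matching conventions and presents no real mathematical difficulty. The cogroup structure on $\fy$ plays no role in the \emph{existence} of this adjunction — it is used only later, to endow the output $\MAP_{X}(\fy, \xd)$ with the algebraic structure needed to extract the spectral sequence — so the lemma itself is a formal consequence of local presentability of $X$ together with the standard tensoring available on any presentable quasi-category.
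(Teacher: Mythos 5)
Your proposal is correct, but it takes a genuinely different route from the paper. You argue intrinsically: since $X$ is presentable it is cocomplete, hence canonically tensored over spaces in the sense of \cite[\S 4.4.4]{LurieHTT}, with \w{\Map\sb{X}(K\otimes x,y)\simeq\Map\sb{\Ss}(K,\Map\sb{X}(x,y))}; freezing the first variable at $\fy$ gives a colimit-preserving functor \w[,]{\fy\otimes(-):\Ss\to X} and the adjoint functor theorem \cite[Corollary 5.5.2.9]{LurieHTT} plus a Yoneda identification of the right adjoint yields the adjunction. The paper instead rigidifies: by \cite[Proposition A.3.7.6]{LurieHTT} one has \w{X\simeq\fB(\mA)} for $\mA$ the underlying simplicial category of a combinatorial simplicial model category $\bA$, the simplicial structure of $\bA$ gives a strict Quillen adjunction \w{\fy\otimes(-)\dashv\map\sb{\mA}(\fy,-)} between $\sSet$ and $\bA$, and \cite[Proposition 5.2.4.6]{LurieHTT} converts this into an adjunction of quasi-categories. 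Your approach is model-independent and makes explicit why only presentability (not the cogroup structure) is needed, which you correctly observe; its one delicate point is your step (iii): objectwise equivalences of mapping spaces do not by themselves assemble into an adjunction of quasi-categories, so you really do need to route through the adjoint functor theorem (or construct a unit), as you indicate, and then identify the right adjoint with \w{\MAP\sb{X}(\fy,-)} by Yoneda. The paper's strictification buys exactly this for free \wh the coherence is absorbed by the existing result that (simplicial) Quillen adjunctions induce quasi-categorical adjunctions \wh and it also identifies the right adjoint with the strict simplicial mapping space \w[,]{\map\sb{\mA}(\fy,-)} which is the form used later when replacing \w{\Map\sb{X}(\fy,\xd)} by a strict (co)simplicial space; with your route that comparison of conventions, which you flag as the bookkeeping obstacle, would still have to be made.
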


\begin{proof}
By \cite[Proposition A.3.7.6]{LurieHTT}, \w[,]{X \simeq \fB(\mA)} where $\mA$ is
the underlying simplicial category of a combinatorial simplicial model category $\bA$.
There is a Quillen adjunction
$$
\fy\otimes (-):\leftrightarrows :\map\sb{\mA}(\fy, -)~,
$$
where $\otimes$ comes from the simplicial structure of $\bA$.
By \cite[Proposition 5.2.4.6]{LurieHTT}, this induces an adjunction of quasi-categories.
\end{proof}

\begin{mysubsection}{The spectral sequence of a simplicial object}
\label{sssso}
We briefly recall the necessary background from \cite{BMeadS}, in the case
where $X$ is a quasi-category (with enough colimits and limits), $\fy$ is a
compact homotopy cogroup object of \w[,]{\LWX} and \w{\xd\in sX} is a simplicial
object in $X$. The associated spectral sequence, originally due to Quillen in
\cite{QuiS} (see also \cite{BFrieH}), has the form:
\begin{myeq}\label{eqssso}
E\sp{1}\sb{n,p}~=~\pi\sb{p}\Map\sb{X}(\fy, x\sb{n})~
~\implies~\pi\sb{p+n}\Map\sb{X}(\fy,\|\xd\|)~,
\end{myeq}
\noindent using the mapping spaces of Lemma \ref{lem8.2}.

The spectral sequence  is in fact determined by the restriction of \w{\xd} to \w{\spX}
(see \S \ref{snac}). This is also true of \w[,]{\|\xd\|} the
\emph{geometric realization} (colimit) of \w{\xd} (see \cite[Appendix A]{SegCC}).
Therefore, from now on we shall work with restricted simplicial objects.

With some exceptions (see, e.g., \cite[\S 4]{BKanSQ}) the only useable spectral
sequences are those of abelian groups, so it is not a significant restriction
to assume that \w{\fy=\Sigma\hy} is in fact a suspension in $X$.

In distinction from the usual approach using exact couples
\w[,]{(E\sp{r},D\sp{r})} for our purposes we work throughout by
representing a class $\gamma$ in \w{E\sp{r}\sb{n,p}} of the
spectral sequence by elements
\w{[f]\in\pi\sb{p}\Map\sb{X}(\hy,x\sb{n})} in
\w[.]{E\sp{1}\sb{n,p}} Each such \w{[f]} is a homotopy invariant
in $X$, or in the Reedy model structure on \w[,]{\spX} but of
course we may have many such representatives for the given
\w[.]{\gamma=\lra{f}} From the general theory we know that the
differential \w{d\sp{r}(\gamma)} vanishes if and only if there
exists a representative \w{[f]} for which \w{\fd\sp{r}[f]=0} in
\w[,]{E\sp{1}\sb{n-r,p+r-1}} where  \w{\fd\sp{r}[f]} is defined by
a particular choice of lifts in terms of the original exact couple
\w[.]{(E\sp{1},D\sp{1})}

Specializing to the Bousfield-Friedlander spectral sequence of a simplicial space
(in the version of \cite[\S 8]{DKStovB}), we showed in \cite[Theorem 3.11]{BMeadS}
that if \w{\gamma\in E\sp{r}\sb{n,p}} is represented by a map
\w[,]{f:\Sigma\sp{p}\hy\to x\sb{n}} then \w{[f]} survives to \w{E\sp{r}\sb{n,p}}
(for \w[)]{r\geq 2} if and only if we can fit $f$ into a diagram in $X$ of the form:
\myvdiag[\label{diagnk}]{
  \Sigma\sp{p}\hy  \ar@/^1pc/[rr]\sp{d\sb{0}=0}\sb{\vdots}
  \ar@/_1pc/[rr]\sb{d\sb{n}=0} \ar[dd]\sb{f}  && 0
  \ar@/^1pc/[rr]\sp{d\sb{0}=0}\sb{\vdots}  \ar@/_1pc/[rr]\sb{d\sb{n-1}=0} \ar[dd] &&
  0 \ar[dd] &\cdots\cdots & 0 \ar[dd] \\
  && && && \\
x\sb{n} \ar@/^1pc/[rr]\sp{d\sb{0}}\sb{\vdots}  \ar@/_1pc/[rr]\sb{d\sb{n}} && x\sb{n-1}
\ar@/^1pc/[rr]\sp{d\sb{0}}\sb{\vdots} \ar@/_1pc/[rr]\sb{d\sb{n-1}} && x\sb{n-2} &
\cdots \cdots & x\sb{n-r+1}~.
}
\noindent Moreover, the value of the differential \w{\fd\sp{r}([f])} is represented in
\w{E\sp{1}\sb{n-r,p+r-1}} by a map \w{\Sigma\sp{p+r-1}\hy\to x\sb{n-r}} constructed by
the universal property of the diagram of the form \wref[,]{diagnk}
(see \cite[Corollary 6.11]{BMeadS}), so the fact that
\w{\fd\sp{r}([f])} is zero in \w{E\sp{r}\sb{n-r,p+r-1}} \wwh in other words, having
\emph{some} nullhomotopic representative in \w[,]{E\sp{1}\sb{n-r,p+r-1}} for some choice
of such an extension \wh is equivalent to \w{\lra{f}} surviving to
\w[.]{E\sp{r+1}\sb{n,p}} See \S \ref{sdiagdesc} below for an alternative construction
of \w[.]{\fd\sp{r}([f])}
\end{mysubsection}

\begin{remark}\label{rerterm}
In order to calculate the usual \ww{E\sp{r+1}\sb{n,p}}-term of a spectral sequence
described as above using the \ww{E\sp{1}}-term alone, we need to know not only which
classes \w{[f]\in E\sp{1}\sb{n,p}} survive to the \ww{E\sp{r+1}}-term, but also which
of them are hit by the \ww{d\sp{m}}-differential \wh or more precisely, are in the
image of \w{\fd\sp{m}} \wwh for some \w[.]{1\leq m\leq r}
This involves an analysis of all possible diagrams of the form \wref{diagnk} for
the given values of \w[,]{(n,r,p)} as well as those for \w{(n+m-1,m,p-m+1)}
with \w[.]{p\leq m\leq r}

However, when \w[,]{n<r} diagrams of the form \wref{diagnk} do not exist, but we must
take into account all diagrams starting in dimension $n$ and terminating in
\w{x\sb{1}} used to calculate \w{\fd\sp{n}} itself in order to know if
\w{[f]\in E\sp{1}\sb{n,p}} survives to the \ww{E\sp{n+1}}-term, and thus by default to
the \ww{E\sp{r+1}}-term.
\end{remark}

\begin{mysubsection}{Chain complexes}
\label{scc}
We now explain how the spectral sequence of a simplicial object may be described in
more traditional terms, using chain complexes:

If $\C$ is a pointed category, let \w{\Ch(\C)} denote the category of
(non-negatively graded) chain complexes in $\C$: that is, commuting diagrams of the form
\mydiagram[\label{eqchaincx}]{
\dotsc A\sb{n}\ar[r]\sp{\df{n}} \ar[rd] |!{[d];[r]}\hole &
A\sp{n-1} \ar[r]\sp{\df{n-1}} \ar[rd] |!{[d];[r]}\hole &
A\sp{n-2}\dotsc &
\dotsc A\sp{2} \ar[r]^(0.65){\df{2}} \ar[rd] |!{[d];[r]}\hole &
A\sp{1}\ar[r]\sp{\df{1}} &  A\sp{0}~,\\
\dotsc \ast \ar[ru] & \ast \ar[ru] & \ast\dotsc & \dotsc \ast\ar[ru]  & \ast\ar[ru]  &
}
\noindent so \w{\df{i-1}\circ\df{i}=0} for \w[.]{i\geq 1}
For any \w{A\in\C} and \w[,]{n\geq 0} let \w{A\oS{n}} be the chain complex having
$A$ in dimension $n$, and $0$ elsewhere.

We denote by \w{\Chn{n}{k}(\C)} the category of \wwb{n,k}\emph{truncated}
chain complexes in $\C$ \wh that is, diagrams \wref{eqchaincx} starting at $n$ and
ending at $k$ \wh with truncation functor \w[.]{\tnk{n}{k}:\Ch(\C)\to\Chn{n}{k}(\C)}
\end{mysubsection}

\begin{remark}\label{radjoints}
If $\C$ has enough (co)limits, \w{\tnk{n}{k}} has a left adjoint
\w{\lnk{n}{k}:\Chn{n}{k}(\C)\to\Ch(\C)} defined by:
$$
\lnk{n}{k}(\Ds)\sb{i}~=~
\begin{cases}
D\sb{i}& \text{if}~k\leq i\leq n\\
\Coker(\partial\sb{k+1})& \text{if}~i=k-1\\
  0&\text{otherwise,}
\end{cases}
$$
\noindent as well as a right adjoint
\w{\rnk{n}{k}:\Chn{n}{k}(\C)\to\Ch(\C)} defined by:
$$
\rnk{n}{k}(\Cs)\sb{i}~=~
\begin{cases}
C\sb{i}& \text{if}~k\leq i<n\\
\Ker(\partial\sb{n}) & \text{if}~i=n+1\\
  0&\text{otherwise.}
\end{cases}
$$
\noindent Note that if $\C$ is a model category and \w{\Ds} is Reedy cofibrant in
\w{\Chn{n}{k}(\C)} (see \cite[\S 15.3]{HirM}), then \w{\Coker(\partial\sb{k+1})} is
the homotopy colimit of the truncated version of \wref[.]{eqchaincx} Similarly,
if \w{\Cs} is Reedy fibrant, \w{\Ker(\partial\sb{n})} is the homotopy limit.
For \w[,]{\Cs\in\Ch(\C)} we write \w{\cskn{n}{k}\Cs} for
\w[,]{\rnk{n}{k}\tnk{n}{k}\Cs} with the unit \w{\enk{n}{k}:\Cs\to\cskn{n}{k}\Cs}
a fibration when \w{\Cs} is fibrant.
\end{remark}

\begin{mysubsection}{Simplicial objects and chain complexes}
\label{ssocc}
If $\C$ is a pointed category with enough limits, the $n$-th \emph{Moore chains} object
of a restricted simplicial object \w{\Xd\in\C\sp{\rDop}} is defined to be:
\begin{myeq}\label{eqmoor}
C\sb{n}\Xd~:=~\cap\sb{i=1}\sp{n}\Ker\{d\sb{i}:X\sb{n}\to X\sb{n-1}\}~,
\end{myeq}
\noindent with differential
\w[.]{\df{n}:=d\sb{0}\rest{C\sb{n}\Xd}:C\sb{n}\Xd\to C\sb{n-1}\Xd}
The $n$-th \emph{Moore cycles} object is thus
\w[,]{Z\sb{n}\Xd~:=~\cap\sb{i=0}\sp{n}\Ker\{d\sb{i}:X\sb{n}\to X\sb{n-1}\}}
with \w{v\sb{n}:Z\sb{n}\Xd\to C\sb{n}\Xd} the inclusion. Note that \w{\df{n}}
factors through \w[.]{\wdf{n}:C\sb{n}\Xd\to Z\sb{n}\Xd}

The Moore chains functor \w{\Cs:s\sb{+}\C\to\Ch(\C)} has a left
adjoint (and right inverse) \w[,]{\E:\Ch(\C)\to s\sb{+}\C} with
\w[,]{(\E A\sb{\ast})\sb{n}=A\sb{n}} \w[,]{d\sb{0}\sp{n}=\df{n}} and
\w{d\sb{i}\sp{n}=0} for \w[.]{i\geq 1}

If $\C$ has enough colimits, the $n$-th \emph{latching object} for \w{\Xd\in\C\sp{\Dop}}
is the colimit
\begin{myeq}\label{eqlatch}
L\sb{n}\Xd~:=~\colimit{\theta\op:\bk\to\bn}\,X\sb{k}~,
\end{myeq}
\noindent where $\theta$ ranges over surjections \w{\bn\to\bk} in
$\Del$. Any iterated degeneracy \w{s\sb{I}:\bX\sb{k}\to\bX\sb{n}} factors through
the obvious map \w[.]{\sigma\sb{n}:L\sb{n}\Xd\to\bX\sb{n}}

If \w{\Xd} is an abelian group object in \w[,]{\C\sp{\Dop}} the
natural map \w{C\sb{n}\Xd\to\Coker(\sigma\sb{n})} is an
isomorphism, by \cite[Corollary (1.12)]{DolH}, so if we set
\w[,]{\oX{n}:=C\sb{n}\Xd} we have
\begin{myeq}\label{eqsplatch}
\bX\sb{n}~\cong~ L\sb{n}\Xd\oplus\oX{n}\hs \text{for each}\hs n\geq 0~,
\end{myeq}
\noindent and thus by induction (starting with \w[):]{\oX{0}=\bX\sb{0}}
\begin{myeq}\label{eqsplitlatch}
L\sb{n}\Xd~:=~
\coprod\sb{0\leq k\leq n-1}~~\coprod\sb{0\leq i\sb{1}<\dotsc<i\sb{n-k-1}\leq n-1}~
\oX{k}~,
\end{myeq}
\noindent with each summand on the right mapping to the left by
\w[.]{s\sb{i\sb{n-k-1}}\dotsc s\sb{i\sb{2}}s\sb{i\sb{1}}}

Note that the inclusion \w{\rDel\hra\Delta} induces a forgetful functor
\w[,]{\U:\C\sp{\Dop}\to\C\sp{\rDop}} and its left adjoint
\w{\LLc:\C\sp{\rDop}\to\C\sp{\Dop}} is given by
\w[.]{(\LLc\Xd)\sb{n}=X\sb{n}\amalg L\sb{n}\Xd}
The adjunction \w{\Cs\U:\C\sp{\Dop}\rightleftharpoons\Ch(\C):\LLc\E} can be viewed as
a version of the Dold-Kan correspondence in an arbitrary category \wh which is not
generally an equivalence, unless $\C$ is abelian.
See \cite[\S 1]{BJTurnHI} for further details.

All this makes sense also in any pointed \wwb{\infty,1}category, such as a
quasi-category $X$ (see Remark \ref{rinfc}). Moreover, if \w{\xd} in \w{sX}
and $\fy$ in $X$, are as in \S \ref{sssso}, and \w{\Xd:=\Map\sb{X}(\fy,\xd)} in
\w[,]{s\Sa} then \wref{eqsplatch} and \wref{eqsplitlatch} still hold up to weak
equivalence. Moreover, this holds even if $\fy$ is just a cogroup object in
\w[,]{\ho X} in some cases (see \cite{BJTurnHI}).
\end{mysubsection}

\begin{mysubsection}{A diagrammatic description of the differentials}
\label{sdiagdesc}
Given \w{\lra{\xd,\fy=\Sigma\hy}} in a quasi-category $X$ as in \S \ref{sssso},
let \w{\Xd:=\Map\sb{X}(\hy,\xd)} be the corresponding  homotopy coherent
simplicial space. By \cite{DKSmitH} (see also \cite[IX]{GJarS}), we can replace it
by a strict simplicial space in \w[,]{\Sa\sp{\Delta}} and further assume that \w{\Xd} is
Reedy fibrant.

In this case, as explained in \cite[\S 6]{BMeadS}, in order to obtain a
diagram \wref{diagnk} in $X$, it suffices to find a homotopy coherent diagram
\myvdiag[\label{diagnksp}]{
  \bA  \ar@/^1pc/[rr]\sp{d\sb{0}=0}\sb{\vdots}
  \ar@/_1pc/[rr]\sb{d\sb{n}=0} \ar[dd]\sb{\hat{f}}  && 0
  \ar@/^1pc/[rr]\sp{d\sb{0}=0}\sb{\vdots}  \ar@/_1pc/[rr]\sb{d\sb{n-1}=0} \ar[dd] &&
  0 \ar[dd] &\cdots\cdots & 0 \ar[dd] \\
  && && && \\
\bX\sb{n} \ar@/^1pc/[rr]\sp{d\sb{0}}\sb{\vdots}  \ar@/_1pc/[rr]\sb{d\sb{n}} && \bX\sb{n-1}
\ar@/^1pc/[rr]\sp{d\sb{0}}\sb{\vdots} \ar@/_1pc/[rr]\sb{d\sb{n-1}} && \bX\sb{n-2} &
\cdots \cdots & \bX\sb{n-r+1}~.
}
\noindent in \w[.]{\Sa} Since \w[,]{\fy=\Sigma\hy} 
so we may take \w{\bA=S\sp{p}} for \w[,]{p\geq 1} where $\hat{f}$ corresponds to
\w[.]{[f]\in\pi\sb{p}\Map\sb{X}(\hy,\xd)}

We extend the usual notation \w{\CsA{n}} for the cone on the $n$-fold suspension
by setting \w{\CsA{-1}:=\bA} and \w[,]{\CsA{-2}:=\ast} with \w{i\sp{n}:\sA{n}\to\CsA{n}}
the inclusion and \w{q\sp{n}:\CsA{n}\to\sA{n+1}} the quotient map.

It was shown in \cite[\S 2.B]{BJTurnHI} that one can extend
\w{\hat{f}:\bA\to\bX\sb{n}} to a diagram \wref{diagnksp}
as above if and only if we have a solid map of truncated chain complexes
\w{F:\bDs\to\tnk{n}{k}\Cs\Xd} in \w{\Chn{n}{k}(\Sa)} of the form
\mydiagram[\label{eqgrid}]{
\bD\sb{n}=\CsA{-1}=\bA \ar[dd]\sb{\df{n}} \ar[dr]\sp{q\sp{n-k-3}}
  \ar[rrr]\sp{F\sb{n}} & & &
  C\sb{n}\Xd \ar[dl]\sp{\wdf{n}} \ar[dd]\sp{\df{n}}\\
& \sA{0} \ar[dl]\sb{i\sp{0}} \ar[r]\sp{a\sb{n-1}}& Z\sb{0}{\Xd}\ar[dr]\sp{v\sb{0}}\\
\bD\sb{n-1}=\CsA{1} \ar@{.>}[d] \ar[rrr]\sp{F\sb{n-1}} & & & C\sb{n-1}\Xd\ar@{.>}[d]\\
  \bD\sb{k}=\CsA{n-k-1} \ar[dd]\sb{\df{k}} \ar[dr]\sp{q\sp{n-k-1}}
  \ar[rrr]\sp{F\sb{k}} & & & C\sb{k}{\Xd}\ar[dl]\sp{\wdf{k}}
    \ar[dd]\sp{\df{k}} \\
    & \sA{n-k} \ar[dl]\sb{i\sp{n-k}} \ar[r]\sp{a\sb{k-1}} &
    Z\sb{k-1}{\Xd} \ar[dr]\sp{v\sb{k-1}}\\
\bD\sb{k-1}=\CsA{n-k-2} \ar@{-->}[rrr]\sp{F\sb{k-1}}  & & & C\sb{k-1}\Xd
}
\noindent for \w[.]{k=n-r+1} Here \w[,]{j\sb{n}\circ F\sb{n}\simeq\hat{f}} for
\w{j\sb{n}:C\sb{n}\Xd\hra\bX\sb{n}} as above, and \w{\bDs} is a cofibrant replacement
for \w{\bA\oS{n}} in \w{\Chn{n}{k}(\Sa)} (in the notation of \S \ref{scc}),
which extends to \w{\Ch(\Sa)} in the obvious way. In addition, we have
\w{\lnk{n}{k}\bDs} ending in \w{\sA{n-k}} in dimension \w[,]{k-1} by Remark
\ref{radjoints}, with $F$ inducing the map \w[,]{c(F):=v\sb{k-1}\circ a\sp{k-1}} by
adjunction. This map represents \w{\fd\sp{r}([f])} in
\w[,]{E\sp{1}\sb{n-r,p+r-1}} and it must be nullhomotopic in order for \w{F\sp{k-1}}
to exist.
\end{mysubsection}

\begin{remark}\label{rerequivs}
Since \w{\bDs}  is cofibrant, \w{\Cs\Xd}  is fibrant, and \w{i\sp{n-k}} is a
cofibration, we have a (homotopy) pullback diagram
\mydiagram[\label{eqhpb}]{
  \map\sb{\Chn{n}{k-1}(\C)}(\tnk{n}{k-1}\bDs,\tnk{n}{k-1}\Cs\Xd) \ar[d]
  \ar@{->>}[rr]\sp{(\tnk{n}{k})\sb{\ast}} &&
\map\sb{\Chn{n}{k}(\C)}(\tnk{n}{k}\bDs,\tnk{n}{k}\Cs\Xd) \ar[d]\sb{c} \\
\ast\simeq\map\sb{\C}(\bD\sb{k-1},C\sb{k-1}\Xd) \ar@{->>}[rr]\sp{(i\sp{n-k})\sp{\ast}}
&&\map\sb{\C}(\sA{n-k},C\sb{k-1}\Xd)~,
}
\noindent for \w{c(F):=v\sb{k-1}\circ a\sp{k-1}} as above. In fact, by Remark
\ref{radjoints} we have a natural identification
\begin{myeq}\label{eqadjcha}
\map\sb{\Chn{n}{k}(\C)}(\tnk{n}{k}\bDs,\tnk{n}{k}\Cs\Xd)~=~
\map\sb{\Ch(\C)}(\lnk{n}{k}\bDs,\Cs\Xd)~,
\end{myeq}
\noindent and the map $c$ on the right hand side of \wref{eqadjcha} is induced by
restriction to dimension \w[.]{k-1}

Note that \w{\map\sb{\Chn{n}{k}(\C)}(\tnk{n}{k}\bDs,\tnk{n}{k}\Cs\Xd)} splits as
a disjoint union of subspaces corresponding to distinct \w{[f]\in E\sp{1}\sb{n,p}}
which survive to \w{E\sp{r}} (each of which may consist of several connected
components).
Evidently, a map \w{g:\xd\to\yd} in \w{sX} or \w{\spX} (or the corresponding map
\w{\hat{g}:\Xd\to\Yd} in \w{\Sa\sp{\Dop}} or \w[)]{\Sa\sp{\rDop}} which induces
a weak equivalence on the right vertical arrow of \wref{eqhpb} for all \w{n\geq r}
will induce a weak equivalence
$$
\map\sb{\Chn{n}{k-1}(\C)}(\tnk{n}{k-1}\bDs,\tnk{n}{k-1}\Cs\Xd)~\xra{g\sb{\ast}}~
\map\sb{\Chn{n}{k-1}(\C)}(\tnk{n}{k-1}\bDs,\tnk{n}{k-1}\Cs\Yd)~,
$$
\noindent and thus an isomorphism in the \ww{E\sp{r+1}}-terms of the spectral sequences
of \w{\lra{\xd,\fy}} and \w[.]{\lra{\yd,\fy}}  However, this is far from
being necessary.
\end{remark}

%
%
\sect{The spectral sequence of a simplicial object and localization}
\label{cssl}

Let \w{\lra{X,\Fib,\W}} be a complete quasi-category with fibrations and weak
equivalences  as in Definition \ref{def1.2}, satisfying Assumptions \ref{ass3.1}.
By Corollary \ref{cor1.6}, \w{\spX} (and its various truncations)
may be equipped with Reedy fibrations and levelwise equivalences to
form a quasi-category with fibrations and weak equivalences.

It turns out that there are two types of localization relevant to the spectral
sequence of a simplicial object \wh both using the fact that the differentials
are determined by mapping out of finite diagrams as in \wref{diagnksp}
or \wref[.]{eqgrid}

\begin{mysubsection}{Postnikov sections and spectral sequences}
\label{spsss}
The first type of localization is based on the oldest known form of localization
in homotopy theory \wh the Postnikov section:

As noted in \S \ref{sssso},  we may assume that \w{\fy=\Sigma\hy}
is a suspension in $X$, and so  \w{\bA=S\sp{p}} in \wref{eqgrid}
for \w[,]{p\geq 0} so we may replace the map of truncated chain complexes
\w{F:\bDs\to\tnk{n}{k}\Cs\Xd} in \w{\Chn{n}{k}(\Sa)} in \wref{eqgrid}
under the $p$-fold loop-suspension adjunction in \w{\Sa} by
\w[,]{\widetilde{F}:\tDs\to\tnk{n}{k}\Omega\sp{p}\Cs\Xd} where \w[,]{\tDs}
a cofibrant replacement of \w{\tA\oS{n}} for\w[,]{\tA=\bS{0}} is the $p$-fold
desuspension of \w[.]{\bDs}  Note that all objects in \w{\tDs} are of dimension
\w[,]{\leq n-k=r-1} so $\widetilde{F}$ factors through the \wwb{r-1}Postnikov section
of \w[.]{\tnk{n}{k}\Omega\sp{p}\Cs\Xd} The same is true of the corresponding map
$\widehat{F}$ in the right hand side of \wref[,]{eqadjcha} and thus also for
\w[,]{c(\widetilde{F})} adjoint to \w[.]{c(F)}

Note that the adjunction
\begin{myeq}\label{eqchsimp}
\E\colon\Ch(\Sa)~\leftrightarrows~\Sa\sp{\rDop}\colon\Cs
\end{myeq}
\noindent of \S \ref{ssocc} is homotopy meaningful, by \cite[Lemma 2.7]{StoV},
and allows us to convert $\widetilde{F}$  into a map in \w{\Sa\sp{\rDell{k,n}}}
of the form \wref{diagnksp} in which we have a cofibrant replacement for the top
row (since $\E$ preserves Reedy cofibrancy). Using Remark \ref{rerterm}, we deduce:
\end{mysubsection}

\begin{prop}\label{rtrunc}
Given \w{\xd\in sX} as in \S \ref{sssso} and $\hy$ as above, the representation of
\w{\fd\sp{r}([f])} in \w{E\sp{1}\sb{n-r,p+r-1}} associated to the
extension \wh and thus \w{d\sp{r}(\lra{f})\in E\sp{r}\sb{n-r,p+r-1}} itself, as we
run over all possible extensions \wh depends only on
\w[,]{\Po{r-1}\Omega\sp{p}\Map\sb{X}(\hy,\xd)}
which thus determines \w[,]{E\sp{r+1}\sb{\ast,\ast}} and in particular
allows us to determines whether \w{[f]} survives to \w[.]{E\sp{r+1}}
\end{prop}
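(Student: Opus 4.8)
The plan is to trace through the diagrammatic description of the differential in \S\ref{sdiagdesc} and observe that every constraint defining a valid extension \wref{eqgrid}, as well as the resulting representative \w{c(F)} of \w[,]{\fd\sp{r}([f])} is detected by the mapping space \w[,]{\map\sb{\Chn{n}{k}(\C)}(\tnk{n}{k}\bDs,\tnk{n}{k}\Cs\Xd)} and that this space depends only on \w{\Po{r-1}\Omega\sp{p}\Map\sb{X}(\hy,\xd)} once we pass through the loop-suspension adjunction as in \S\ref{spsss}. First I would invoke the reduction already set up before the statement: since \w{\fy=\Sigma\hy} we may take \w{\bA=\bS{p}} in \wref[,]{eqgrid} apply the \w{p}-fold loop-suspension adjunction in \w{\Sa} to replace \w{F:\bDs\to\tnk{n}{k}\Cs\Xd} by \w[,]{\widetilde{F}:\tDs\to\tnk{n}{k}\Omega\sp{p}\Cs\Xd} and note that all objects of \w{\tDs} sit in dimensions \w[,]{\leq n-k=r-1} so that \w{\widetilde{F}} (and likewise \w{\widehat{F}} on the right side of \wref{eqadjcha} and the induced map \w[)]{c(\widetilde{F})} factors through the \wwb{r-1}Postnikov section of the target.

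Next I would make precise that ``depends only on \w['']{\Po{r-1}\Omega\sp{p}\Map\sb{X}(\hy,\xd)} means the following: if \w{g:\xd\to\yd} in \w{sX} induces an equivalence \w[,]{\Po{r-1}\Omega\sp{p}\Map\sb{X}(\hy,\xd)\xra{\simeq}\Po{r-1}\Omega\sp{p}\Map\sb{X}(\hy,\yd)} then applying \w{\tnk{n}{k}\Omega\sp{p}\Cs(-)} and truncating by \w{\Po{r-1}} gives a levelwise equivalence of \wwb{r-1}truncated chain complexes, hence \wh since \w{\tDs} is dimensionwise \wwb{r-1}truncated and cofibrant, and \w{\Cs\Xd} (so also \w[)]{\Omega\sp{p}\Cs\Xd} is Reedy fibrant \wh an equivalence on the mapping spaces \w[.]{\map\sb{\Chn{n}{k}(\C)}(\tnk{n}{k}\tDs,\tnk{n}{k}\Omega\sp{p}\Cs\Xd)} This uses the identification \wref{eqadjcha} and the homotopy pullback square \wref{eqhpb}, whose bottom-left corner is contractible; the right vertical arrow \w{c} is then equivalent to \w[,]{g\sb{\ast}} so the top-left mapping spaces \wh whose path components record exactly the valid extensions \wref{eqgrid} and whose image under \w{c} records \w{\fd\sp{r}([f])} \wh are matched. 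Here I would cite \cite[Lemma 2.7]{StoV} and the homotopy-meaningfulness of \wref{eqchsimp} to pass freely between the chain-complex picture \wref{eqgrid} and the simplicial picture \wref[,]{diagnksp} and Remark \ref{rerterm} to assemble the statement about \w{E\sp{r+1}\sb{\ast,\ast}} from the statements about individual \w{\fd\sp{m}} for \w[,]{1\leq m\leq r} noting that each such \w{\fd\sp{m}} is governed by diagrams of length \w[,]{\leq m\leq r} hence by \w{\Po{m-1}\Omega\sp{\bullet}\Cs\Xd} and a fortiori by \w[.]{\Po{r-1}\Omega\sp{p}\Cs\Xd}

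I expect the main obstacle to be the bookkeeping around the boundary cases flagged in Remark \ref{rerterm}: when \w{n<r} there are no diagrams \wref{diagnksp} of full length \w[,]{r} and one must instead track all diagrams starting in dimension \w{n} and terminating in \w{\bX\sb{1}} used to compute \w{\fd\sp{n}} \wh but these still involve only dimensions \w[,]{\leq n-1<r-1} so they are still seen by \w[,]{\Po{r-1}} and the argument goes through, albeit with a separate (easier) instance of the same reasoning. A secondary technical point to be careful about is that the Postnikov section \w{\Po{r-1}} must be taken after looping: it is \w{\Po{r-1}\Omega\sp{p}\Map\sb{X}(\hy,\xd)} and not \w{\Po{r-1+p}\Map\sb{X}(\hy,\xd)} that appears, because the map \w{\widehat{F}} in \wref{eqadjcha} lands in the \wwb{r-1}Postnikov section of \w{\tnk{n}{k}\Omega\sp{p}\Cs\Xd} rather than of \w{\tnk{n}{k}\Cs\Xd} itself; keeping the loop coordinate \w{p} fixed throughout and only truncating the ``internal'' direction is what makes the dependence as sharp as claimed.
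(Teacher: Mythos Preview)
Your proposal is correct and follows essentially the same approach as the paper: the argument given in \S\ref{spsss} just before the proposition is precisely the loop-suspension adjunction replacing $F$ by $\widetilde{F}:\tDs\to\tnk{n}{k}\Omega\sp{p}\Cs\Xd$, the observation that $\tDs$ has cells of dimension $\leq r-1$ so everything factors through $\Po{r-1}$, the appeal to \cite[Lemma 2.7]{StoV} for the adjunction \wref{eqchsimp}, and the invocation of Remark \ref{rerterm} to pass from the individual $\fd\sp{m}$ to $E\sp{r+1}\sb{\ast,\ast}$. Your write-up is in fact more detailed than the paper's, spelling out the meaning of ``depends only on'' via the mapping-space equivalence and handling the $n<r$ boundary case explicitly, but the route is the same.
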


This result can be refined using the following:

\begin{defn}\label{dzmnp}
Given $\hy$ as above, for any \w{p\geq 0} and
\w{1\leq m \le n} we let \w{H(n,m,\Sigma\sp{p}\hy)} denote the diagram
\mywdiag[\label{eq1}]{
&  \Sigma\sp{p}\hy \ar@<3ex>[rr]\sp{d\sb{0} = 0} \ar@<0.5ex>[rr]\sp{d\sb{1} = 0}
  \ar@<-2.5ex>[rr]\sb{d\sb{n} = 0}\sp{\vdots} && 0 \ar@<2ex>[rr]\sp{d\sb{0}}
  \ar@<-1ex>[rr]\sb{d\sb{n}}\sp{\vdots} && 0
  \ar@<2ex>[rr]\sp{d\sb{0}} \ar@<-1ex>[rr]\sb{d\sb{n-2}}\sp{\vdots}&& \cdots
  \ar@<2ex>[rr]\sp{d\sb{0}} \ar@<-1ex>[rr]\sb{d\sb{n-m+1}}\sp{\vdots} && 0~\\
   \text{dimension:} & n && n-1 && n-2 &&\dotsc&& n-m
}
\noindent in \w{\spnX{n-m,n}} (unique up to a contractible space of choices,
by the universal property of $0$), and let
\begin{myeq}\label{eqhr}
\begin{split}
\HH\sp{r}(\fy)~:=~&
\bigcup\sb{p\geq 1}~\bigcup\sb{n\geq 0}~\left[\bigcup\sb{1\leq m\leq\min\{p,r\}}~
  \{H(n+m,m-1,\Sigma\sp{p-m+1}\hy)\}\right]\\
&~\cup~
\bigcup\sb{n\geq r-1}\,\{H(n,r-1,\Sigma\sp{p}\hy)\}
~\cup~\bigcup\sb{n<r-1}\,\{H(n,n-1,\Sigma\sp{p}\hy)\}
\end{split}
\end{myeq}
\noindent with \w{\HH(\fy):=\bigcup\sb{r=2}\sp{\infty} \HH\sp{r}} the collection
of all such diagrams.
\end{defn}

\begin{remark}\label{rdoublec}
The reader will note that the list in \wref{eqhr} has repetitions; the reason is that
the first set of objects of the form \w{H(n+m,-,-)} are used
to identify when \w{[f]\in E\sp{1}\sb{n,p}} is in the image of the (earlier)
differentials, while the next two sets, of the form \w{H(n,-,-)} are used to verify
that \w{[f]} is a \ww{d\sp{r}}-cycle. Thus for the first set, we are only
interested in maps in the top right corner of \wref{eqhpb} with non-trivial image
under $c$, while for the second case we want the fiber of $c$ (see Remark
\ref{rerequivs}).
One could use this distinction to further refine the localizations defined below, but
we shall not pursue this idea further here.
\end{remark}

\begin{defn}\label{drstem}
The various inclusions
\begin{myeq}\label{eqrestdiag}
\rDell{m,n}~\hookrightarrow~\rDell{m',n}
\end{myeq}
\noindent induce a partial order on the subset of diagrams in \w{\HH(\fy)}
with a fixed $p$ and $n$.

For any quasi-category $X$ and \w{a,b\in X} the adjunction
\w{\Map\sb{X}(\Sigma a,b)\simeq\Omega\Map\sb{X}(a,b)} induces natural maps
\begin{myeq}\label{eqstems}
\Po{r}\Map\sb{X}(\Sigma a,b)\xra{\simeq}\Po{r}\Omega\Map\sb{X}(a,b)\to
  \Po{r-1}\Omega\Map\sb{X}(a,b)\xra{\simeq}\Omega\Po{r}\Map\sb{X}(a,b)
\end{myeq}
\noindent for each \w[.]{r\geq 1}

Thus given \w{\fy=\Sigma\hy\in X} and \w{\xd\in\spX} as in \S \ref{sssso},
for each \w{r\geq 0} we define the \emph{$r$-stem for} \w{\lra{\xd,\fy}} to be the
system consisting of
\begin{myeq}\label{eqrstems}
\Po{m}\Map\sb{\LW(\spnX{n-m+1,n})}
(H(n,m,\Sigma\sp{p}\hy),\tnk{\ast}{n-m+1,n}\xd)
\end{myeq}
\noindent for all \w[,]{H(n,m,\Sigma\sp{p}\fy)\in\HH\sp{r}(\fy)}
under the various maps induced by \wref{eqrestdiag} and \wref[.]{eqstems}
This is a more precise version of the ``spiral $r$-system'' of
\cite[Section 4]{BBlanS}.
\end{defn}

We then deduce from Proposition \ref{rtrunc} and \cite[Theorem 6.8]{BMeadS}
the following refinement of \cite[Theorem 4.13]{BBlanS}:

\begin{thm}\label{tstem}
Given \w{\fy\in X} and \w{\xd\in sX} as in \S \ref{sssso}, for each \w[,]{r\geq 2}
the \ww{E\sp{r}}-term of the associated spectral sequence is determined
by the \wwb{r-2}stem of \w[.]{\lra{\xd,\fy}}
\end{thm}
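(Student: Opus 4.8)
The goal is to show that the $E^r$-term of the spectral sequence associated to $\lra{\xd,\fy}$ depends only on the $(r-2)$-stem, i.e.\ on the system of truncated Postnikov-sectioned mapping spaces \wref{eqrstems} indexed by $\HH^{r}(\fy)$. The strategy is to reduce the statement entirely to the diagrammatic criterion of \S \ref{sdiagdesc}: by \cite[Theorem 3.11]{BMeadS} and \cite[Corollary 6.11]{BMeadS}, computing whether $[f]\in E^1_{n,p}$ survives to $E^{r}$, and whether it lies in the image of $\fd^m$ for some $m\le r-1$, amounts to analyzing existence and the induced value $c(F)$ of maps of truncated chain complexes $F:\bDs\to\tnk{n}{k}\Cs\Xd$ (for $k=n-r+1$) of the form \wref{eqgrid}, together with the analogous diagrams of length $m\le r-1$ needed to compute $\fd^m$ itself (the point of Remark \ref{rerterm} and Remark \ref{rdoublec}). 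So the whole $E^r$-term is recovered from these finite-diagram data.

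First I would recall from Proposition \ref{rtrunc} (and the discussion in \S \ref{spsss}) that, after applying the $p$-fold loop-suspension adjunction, the map $F$ becomes $\widetilde F:\tDs\to\tnk{n}{k}\Omega^{p}\Cs\Xd$ whose source has all objects in dimension $\le n-k=r-1$; hence $\widetilde F$, and its induced value $c(\widetilde F)$, factor through the $(r-1)$-Postnikov section of $\tnk{n}{k}\Omega^{p}\Cs\Xd$. Via the homotopy-meaningful Dold--Kan adjunction \wref{eqchsimp} (\cite[Lemma 2.7]{StoV}), $\tnk{n}{k}\Cs\Xd$ corresponds to the restricted simplicial object $\tnk{\ast}{n-m+1,n}\xd$ over $\rDell{n-m+1,n}$ (with $m=r-1$ in the principal case, and $m\le r-2$ for the auxiliary diagrams computing earlier differentials), and mapping out of the cofibrant $\widetilde D_*$ corresponds to mapping out of the diagram $H(n,m,\Sigma^{p}\hy)$. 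Thus the relevant mapping space is exactly $\Po{m}\Map_{\LW(\spnX{n-m+1,n})}(H(n,m,\Sigma^{p}\hy),\tnk{\ast}{n-m+1,n}\xd)$ — i.e.\ an entry of the stem. Next I would observe that the passage to $E^r$ requires not just each individual such space but the compatibility data: the restriction maps induced by $\rDell{m,n}\hra\rDell{m',n}$ record how a $d^r$-cycle datum restricts to the shorter diagrams governing $d^m$ for $m<r$, and the loop-comparison maps \wref{eqstems} encode the relation between $\Sigma^{p}\hy$ in dimension $n$ and $\Sigma^{p-m+1}\hy$ after shifting (matching the shifted index in the first union of \wref{eqhr}). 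These are precisely the arrows built into the definition of the $r$-stem, so the $(r-2)$-stem carries exactly the data \cite[Theorem 6.8]{BMeadS} needs.

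Assembling this: given two pairs $\lra{\xd,\fy}$ and $\lra{\yd,\fy}$ with isomorphic $(r-2)$-stems, the isomorphism identifies, for every $H(n,m,\Sigma^{p}\hy)\in\HH^{r}(\fy)$, the corresponding Postnikov-sectioned mapping spaces compatibly with the restriction and loop maps; by the chain-complex reformulation this identifies the sets of liftable diagrams \wref{eqgrid} and the values $c(F)$ they induce, hence (by Remark \ref{rerterm}) the subquotient $E^{r}_{n,p}=\ker\fd^r/\operatorname{im}(\text{earlier }\fd^m)$ in each bidegree, and the identification is natural in the obvious sense, so it is an isomorphism of $E^r$-terms. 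I would also handle the edge case $n<r-1$ separately, using the third union in \wref{eqhr}: here no diagram of full length $r-1$ exists, and one instead uses all diagrams terminating at $\bX_1$ (length $n-1$) to decide survival to $E^{n+1}$ and hence to $E^{r}$, exactly as flagged in Remark \ref{rerterm}; the same stem entries cover this.

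\textbf{Main obstacle.} The delicate point is not any single identification but the bookkeeping that the countable family $\HH^{r}(\fy)$ of \wref{eqhr}, together with the partial order \wref{eqrestdiag} and the maps \wref{eqstems}, really captures \emph{all} the information entering $E^r$ — in particular that the ``image of earlier differentials'' part is controlled by the shifted diagrams $H(n+m,m-1,\Sigma^{p-m+1}\hy)$ with the correct indexing (the content of Remark \ref{rdoublec}), and that the Postnikov truncation at stage $m$, rather than something finer, genuinely suffices at each level. Verifying that the restriction maps in the stem are enough to glue the length-$m$ data for all $m\le r-1$ into a coherent obstruction-theoretic picture — i.e.\ that one is not secretly using a higher Postnikov section or a longer diagram than advertised — is where care is needed; this is essentially a careful audit of the construction in \cite[\S 6]{BMeadS} against Definitions \ref{dzmnp} and \ref{drstem}.
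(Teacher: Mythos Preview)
Your proposal is correct and follows essentially the same route as the paper: the paper's own proof is the single sentence ``We then deduce from Proposition \ref{rtrunc} and \cite[Theorem 6.8]{BMeadS},'' and your argument is precisely a detailed unpacking of that deduction via the diagrammatic description of \S \ref{sdiagdesc}, Remark \ref{rerterm}, and the Dold--Kan adjunction \wref{eqchsimp}. One small caution: when you write ``with \w{m=r-1} in the principal case,'' you are describing the diagrams in \w{\HH\sp{r}(\fy)} rather than \w[,]{\HH\sp{r-2}(\fy)} so be sure your indexing of the stem matches the paper's convention in Definition \ref{drstem} (the paper's own bookkeeping here is terse, and consistency with Corollary \ref{cstempr} is what fixes the shift).
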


\begin{mysubsection}{The Postnikov localization}
\label{sploc}
We can reformulate Theorem \ref{tstem} in terms of a pair of Bousfield localizations,
as follows:

The Postnikov section functor \w{\Po{r}:\Sa\to\Sa} is a
nullification with respect to \w[,]{\bS{r+1}} so it is
cocontinuous by \cite[Proposition 3.4.4]{HirM}, and continuous by
\cite[Theorem 9.9]{BousTH}. Thus we may think of it as a left
Bousfield localization \w{\LLc\sp{r}} on the quasi-category $Y$ of
pointed $\infty$-groupoids, with the usual class of fibrations
\w{\Fib} and weak equivalences $\W$ (corresponding to those of the
usual model category structure on \w[),]{\Sa} extended objectwise
to each functor category \w[.]{\Chn{n}{k}(Y)}

Similarly, if for each \w[,]{p\geq 0} we denote the $p$-\emph{connected cover} functor
by \w[,]{(-)\lra{p}:\Sa\to\Sa} we may replace
\w{\Po{r-1}\Omega\sp{p}\Map\sb{X}(\widehat{\fy},\xd)} by
\w{\Po{r-1}(\Map\sb{X}(\widehat{\fy},\xd)\lra{p})} in Proposition \ref{rtrunc}.

However, \w{(-)\lra{p}} is just the colocalization, or cellularization, with respect to
\w{\bS{p+1}} (see \cite[\S 3.1.7]{HirM}), so we may think of it as a right Bousfield
localization \w{\R\sb{p}} on \w{\lra{Y,\Fib,\W}}  as above, again extended to each
\w[.]{\Chn{n}{k}(Y)}

Now fix \w[,]{r\geq 2} and consider the quasi-category
\begin{myeq}\label{eqproduct}
Z\sp{r}~:=~\prod\sb{H(n,m,\Sigma\sp{p}\hy)\in\HH\sp{r}(\fy)}~\Chn{n}{n-m+1}(Y)\bp{p}
\end{myeq}
\noindent (the weak product of presheaf categories).

We may apply to \w{Z\sp{r}} the combined left and right Bousfield
localization taking the form \w{\LLc\sp{m}\circ\R\sb{p}} on the factor
\w[.]{\Chn{n}{n-m+1}(Y)\bp{p}} This defines the $r$-th \emph{Postnikov localization}
functor \w[.]{\PPc\sp{r}:Z\sp{r}\to Z\sp{r}} By Theorem \ref{thm3.5}
(and the corresponding
straightforward analogue for right localizations), \w{Z\sp{r}} has the structure of
a quasi-category with fibrations, in which the class of weak equivalences
\w[,]{\W\sp{r}} called \ww{\PPc\sp{r}}-\emph{equivalences}, are
Reedy (i.e., degree-wise) weak equivalences of the truncated chain complexes
\w{\Po{m-1}\Cs\lra{p}} on the factor \w[.]{\Cs\in\Chn{n}{n-m+1}(Y)\bp{p}}

Note that for any quasi-category $X$ as in \S \ref{sssso}, we have a sequence of
functors
\begin{equation*}
\begin{split}
\spX~&\xra{\Map\sb{X}(\fy,-)}~s\sb{+}\Sa~\xra{\Cs}~
\Ch(Y)~\xra{\tnk{n}{n-m+1}}~\Chn{n}{n-m+1}(Y)\\
&~\xra{\Po{m-1}}~\Chn{n}{n-m+1}(Y)~\xra{(-)\lra{p}}~\Chn{n}{n-m+1}(Y)~
\end{split}
\end{equation*}
\noindent for $m$, $n$, and $p$ as in \wref[,]{eqproduct} which together define
a functor \w[.]{G\sp{r}:\spX\to Z\sp{r}} We think of the component of
\w{G\sp{r}(\xd)} in \w{\Chn{n}{n-m+1}(Y)\bp{p}} as providing the
\wwb{m,p,n}\emph{window} for \w{\xd} (in the sense of \cite[\S 2.2]{BBlanS}).

We see by Proposition \ref{rtrunc} that the spectral sequence for \w{\xd\in\spX}
(with respect to a fixed \w{\fy\in X} as in \S \ref{spsss}) is determined through
the \ww{E\sp{r+2}}-page by \w[,]{G\sp{r}\xd} and conclude from Theorem \ref{tstem}:

\begin{corollary}\label{cstempr}
The \ww{\PPc\sp{r}}-equivalences induce isomorphisms of
the associated spectral sequences from the \ww{E\sp{r+2}}-term on.
\end{corollary}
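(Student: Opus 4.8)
The plan is to exhibit the assignment sending \w{\xd} to the \w{E\sp{r+2}}-page of the spectral sequence of \w{\lra{\xd,\fy}} (and hence to all later pages) as a functor that factors through \w{G\sp{r}\colon\spX\to Z\sp{r}} and is constant on \w{\PPc\sp{r}}-equivalence classes. Granting this, the phrase ``from the \w{E\sp{r+2}}-term on'' follows from the elementary fact, recalled in the introduction, that an isomorphism of \w{E\sp{s}}-terms compatible with \w{d\sp{s}} induces isomorphisms of all \w{E\sp{s'}} with \w[.]{s'\geq s}

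First I would make precise the claim, asserted just before the corollary, that \w{E\sp{r+2}} depends on \w{\xd} only through \w[.]{G\sp{r}(\xd)} By Remark \ref{rerterm}, knowing \w{E\sp{r+2}} amounts to deciding which classes \w{[f]\in E\sp{1}\sb{n,p}} survive to \w{E\sp{r+2}} and which lie in the image of some \w{\fd\sp{m}} with \w[;]{m\leq r+1} by \cite[Theorem 3.11, Corollary 6.11]{BMeadS} and the diagrammatic description of \S\ref{sdiagdesc}, each such question is settled inside a mapping space \w{\map\sb{\Chn{n}{k}(\Sa)}(\tnk{n}{k}\tDs,\tnk{n}{k}\Omega\sp{p}\Cs\Xd)} \wh respectively, inside the homotopy fibre of its restriction map $c$ from Remark \ref{rerequivs} \wh where \w[,]{\Xd=\Map\sb{X}(\fy,\xd)} with \w{\Cs\Xd} Reedy fibrant, \w[,]{\tA=\bS{0}} and \w{\tDs} Reedy cofibrant of dimension \w{\leq r-1} with bottom term \w[.]{\sA{n-k}} Because the source is a $0$-sphere ``living in degree $p$'' and \w{\tDs} has dimension \w[,]{\leq r-1} such a mapping space is unaffected if, in each chain degree \w{k'} with \w[,]{n-m+1\leq k'\leq n} one replaces \w{\Cs\Xd} by its Postnikov section in degree \w{m-1} and by its \w{p}-connected cover; but that replacement is precisely the \w{\Chn{n}{n-m+1}(Y)\bp{p}}-window of \w[.]{G\sp{r}(\xd)} Hence \w{E\sp{r+2}} is a functor of \w[;]{G\sp{r}(\xd)} moreover, since the windows of \w{G\sp{r}(\xd)} already incorporate their Postnikov sections and connective covers, it is even a functor of \w[.]{\PPc\sp{r}(G\sp{r}(\xd))} The check that \w{\HH\sp{r}(\fy)} indexes exactly the family of diagrams needed here is Proposition \ref{rtrunc} together with Theorem \ref{tstem}.

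Next I would show that a \w{\PPc\sp{r}}-equivalence \w{\phi\colon W\to W'} in \w{Z\sp{r}} induces weak equivalences on all of the mapping spaces above, hence the asserted isomorphism of spectral sequences. By construction of \w{\PPc\sp{r}} as the objectwise combined localization \w{\LLc\sp{m}\circ\R\sb{p}} (Theorem \ref{thm3.5} and its right-localization counterpart), \w{\phi} is a degreewise weak equivalence of the truncated chain complexes \w{\Po{m-1}\Cs\lra{p}} on every factor. Since \w{\Chn{n}{k}(Y)} carries the objectwise model structure and \w{\tnk{n}{k}\tDs} is cofibrant with fibrant target, \w{\map\sb{\Chn{n}{k}(Y)}(\tnk{n}{k}\tDs,-)} sends such equivalences to weak equivalences \wh here one uses that the adjunction \wref{eqchsimp} is homotopy meaningful (\cite[Lemma 2.7]{StoV}) and that \w{\E} preserves Reedy cofibrancy, in order to move between the chain picture and the diagrams \wref[.]{diagnksp} As \w{\phi} also induces an equivalence on the restriction maps $c$ themselves, it induces equivalences on their homotopy fibres, which handles the survival question (cf.\ Remarks \ref{rerequivs} and \ref{rdoublec}). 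Combining this with the previous paragraph, a \w{\PPc\sp{r}}-equivalence in \w{Z\sp{r}} produces, between the \w{E\sp{r+2}}-pages reconstructed from its two endpoints, an isomorphism compatible with \w[,]{d\sp{r+2}} which then propagates to \w{E\sp{s}} for all \w[.]{s\geq r+2}

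The step I expect to be the main obstacle is the index bookkeeping in the first paragraph. One must check that the three unions in \wref{eqhr} \wh in particular the auxiliary family \w{H(n+m,m-1,\Sigma\sp{p-m+1}\hy)} inserted to detect classes hit by the earlier differentials, rather than to verify the cycle conditions (Remark \ref{rdoublec}) \wh together pin down survival through \w{E\sp{r+2}} and the values of \w{\fd\sp{m}} for every \w[,]{m\leq r+1} and that the Postnikov degree \w{m-1} and the connectivity \w{p} attached to the factor \w{\Chn{n}{n-m+1}(Y)\bp{p}} are sharp, i.e.\ that no mapping space on the list detects homotopy of \w{\Cs\Xd} above degree \w{m-1} or below degree \w[.]{p} Morally this is a fibrewise application of Proposition \ref{rtrunc} and Theorem \ref{tstem}, but matching the case division of Remark \ref{rerterm} (\w[,]{n\geq r-1} \w[,]{n<r-1} and ``hit by a differential'') with \wref[,]{eqhr} and keeping track of the dimension shifts coming from the cone terms \w{\sA{n-k}} and from the adjunction \w{\lnk{n}{k}\dashv\tnk{n}{k}} of Remark \ref{radjoints}, will take some care. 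A lesser point is to confirm that the weak-product structure on \w{Z\sp{r}} furnished by Theorem \ref{thm3.5} and its counterpart is the one for which \w{G\sp{r}} is homotopical; this follows from Corollary \ref{cor1.6}, Lemma \ref{cor1.8}, and the homotopy-meaningfulness of \wref[.]{eqchsimp}
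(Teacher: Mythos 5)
Your argument follows essentially the same route as the paper: the paper deduces the corollary directly from Proposition \ref{rtrunc} and Theorem \ref{tstem}, i.e.\ from the fact that the spectral sequence through the $E^{r+2}$-page is determined by the windows $G^{r}(\xd)$, whose Postnikov/connected-cover truncations are exactly what $\mathcal{P}^{r}$-equivalences preserve up to levelwise weak equivalence. Your additional details (mapping-space arguments via Remark \ref{rerequivs} and the standard propagation of an $E^{r+2}$-isomorphism to later pages) are correct elaborations of that same argument.
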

\end{mysubsection}

\begin{remark}\label{rtruncations}
We might try to use \w{G\sp{r}:\spX\to Z\sp{r}} to lift the notion of a
\ww{\PPc\sp{r}}-equivalence to \w{\spX} itself, or just to \w[.]{\Ch(\Sa)}
However, because the a weak equivalence at all \wwb{m,p,n}-windows is just
a Reedy equivalence of (restricted) simplicial spaces, we would not gain
anything from the corresponding localization of \w[.]{\spX}

On the other hand, the discussion in \S \ref{ssocc} allows us to reformulate
the Postnikov localization \w{\PPc\sp{r}} in terms of simplicial truncations:
more precisely, if we set
\begin{myeq}\label{eqsproduct}
  \widehat{Z}\sp{r}~:=~
  \prod\sb{H(n,m,\Sigma\sp{p}\hy)\in\HH\sp{r}(\fy)}~\spn{n-m+1,n}(Y)\bp{p}~,
\end{myeq}
\noindent the restrictions \w{\Cs:\spn{n,k}(Y)\to\Chn{n}{k}(Y)} of the Moore chains
to each factor combine to define a functor \w[.]{F:\widehat{Z}\sp{r}\to Z\sp{r}}

Because each restricted \w{\Cs} is right adjoint (and left inverse) to
\w[,]{\tnk{n}{k}\circ\E\circ\rnk{n}{k}} the functor $F$ satisfies assumptions (a)-(c)
of Proposition \ref{pinduce}, so we can use it to lift the \ww{\PPc\sp{r}}-structure of
a quasi-category with fibrations from \w{Z\sp{r}} to \w[.]{\widehat{Z}\sp{r}}
Of course, we could also have constructed it directly as in \S \ref{sploc}.
\end{remark}

\begin{mysubsection}{The $\E\sp{r}$-localization}
\label{serl}
The second form of localization we need is the following:

Note that since $X$ is locally presentable, for each \w{m \le n} we have a
left Kan extension adjunction
$$
\LKE\sb{n, m} :\spn{n,m}(X) \leftrightarrows\spX : i\sp{\ast}\sb{n, m}
$$
\noindent by \cite[Proposition 6.4.9]{CisiH}. By Theorem \ref{thm4.10}, we therefore
have a right Bousfield localization of \w{\spX} at the family
$$
\E\sp{r}~:=~
\{\LKE\sb{m, n}(H(n,m,\Sigma\sp{p}\hy)\}\sb{H(n,m,\Sigma\sp{p}\hy)\in\HH\sp{r}(\fy)}~.
$$
(see \S \ref{dzmnp}). Thus \w{\spX} has a new structure of a quasi-category with
fibrations and weak equivalences, in which the latter are the
\ww{\E\sp{r}}-\emph{equivalences}.
The left Kan extension along a fully faithful inclusion of quasi-categories is also
fully faithful \cite[Proposition 4.3.2.17]{LurieHTT}, so we may deduce
from \S \ref{sssso} and Remark \ref{rerterm}:
\end{mysubsection}

\begin{corollary}\label{cerloc}
The \ww{\E\sp{r}}-equivalences induce \ww{E\sp{r}}-isomorphisms of
the associated spectral sequences.
\end{corollary}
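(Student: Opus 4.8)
The plan is a short deduction from the diagrammatic description of the spectral sequence in \S\ref{sssso} together with Remark \ref{rerterm}, once the defining property of an $\E\sp{r}$-equivalence has been translated into a statement about the finite-diagram mapping spaces appearing there.

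First I would unwind the hypothesis. By Definition \ref{def2.4}, a map $g\colon\xd\to\yd$ in $\spX$ is an $\E\sp{r}$-equivalence exactly when, for every $H(n,m,\Sigma\sp{p}\hy)\in\HH\sp{r}(\fy)$, the induced map
$$
\Map\sb{\LW(\spX)}(\LKE\sb{m,n}(H(n,m,\Sigma\sp{p}\hy)),\xd)~\lora~
\Map\sb{\LW(\spX)}(\LKE\sb{m,n}(H(n,m,\Sigma\sp{p}\hy)),\yd)
$$
is a weak equivalence. Using the adjunction $\LKE\sb{m,n}\dashv i\sp{\ast}\sb{n,m}$ and the fact that $\LKE\sb{m,n}$ is fully faithful (\cite[Proposition 4.3.2.17]{LurieHTT}), both sides are identified with $\Map\sb{\LW(\spnX{n-m,n})}(H(n,m,\Sigma\sp{p}\hy),i\sp{\ast}\sb{n,m}\xd)$ and its analogue for $\yd$. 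Hence an $\E\sp{r}$-equivalence $g$ induces weak equivalences, natural in $\xd$ and compatible with the structure maps of (\ref{eqrestdiag}) and (\ref{eqstems}) and with the maps $c$ of Remark \ref{rerequivs}, of all the mapping spaces
$$
\Map\sb{\LW(\spnX{n-m,n})}(H(n,m,\Sigma\sp{p}\hy),\,i\sp{\ast}\sb{n,m}\xd)
$$
indexed by $\HH\sp{r}(\fy)$, and therefore bijections on each of their $\pi\sb{0}$ and $\pi\sb{1}$.

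Next I would invoke \S\ref{sssso}. By \cite[Theorem 3.11]{BMeadS} a homotopy-coherent map out of $H(n,r-1,\Sigma\sp{p}\hy)$ (or out of $H(n,n-1,\Sigma\sp{p}\hy)$ when $n<r-1$) into the corresponding truncation of $\xd$ is the same as a diagram of the form (\ref{diagnk}), so projecting to dimension $n$ identifies the image on $\pi\sb{0}$ with the set of classes of the $E\sp{1}$-term surviving to the $E\sp{r}$-page; and by \cite[Corollary 6.11]{BMeadS} and \S\ref{sdiagdesc}, the map $c$ applied on $\pi\sb{0}$ to the mapping spaces out of the diagrams $H(n+m,m-1,\Sigma\sp{p-m+1}\hy)$, for $1\le m<r$, computes the images of the earlier differentials $\fd\sp{m}$. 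By Remark \ref{rerterm} this is exactly the data needed to reconstruct the $E\sp{r}$-term from the $E\sp{1}$-term, the group operations being supplied by the cogroup structure of $\fy=\Sigma\hy$, and all the diagrams used lie in $\HH\sp{r}(\fy)$ by (\ref{eqhr}). Combining the two preceding paragraphs, $g$ transports this data isomorphically, so it induces an isomorphism $E\sp{r}\sb{\ast,\ast}(\lra{\xd,\fy})\cong E\sp{r}\sb{\ast,\ast}(\lra{\yd,\fy})$; by naturality of all the constructions, this is an isomorphism of the associated spectral sequences from the $E\sp{r}$-term on.

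The step that requires real care is the last identification: one must verify that the $\HH\sp{r}(\fy)$-indexed mapping-space data, read off through $\pi\sb{0}$, $\pi\sb{1}$, the order maps (\ref{eqrestdiag}) and the maps $c$, recovers not merely the underlying sets but the abelian group structure of each $E\sp{r}$-group and the $d\sp{r}$-differentials between them. This rests on the cogroup structure of $\hy$ and on the explicit bookkeeping in \cite[\S 6]{BMeadS} and Remark \ref{rerterm}; everything else here — the translation through the adjunction, and the passage from weak equivalences of spaces to bijections on $\pi\sb{0}$ and $\pi\sb{1}$ — is routine.
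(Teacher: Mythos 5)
Your proposal is correct and follows essentially the same route as the paper, which deduces the corollary directly from the adjunction $\LKE\sb{m,n}\dashv i\sp{\ast}\sb{n,m}$ and the full faithfulness of left Kan extension along a fully faithful inclusion, combined with the diagrammatic description of survival and differentials in \S \ref{sssso} and Remark \ref{rerterm}. You simply make explicit the identification of the mapping spaces out of the diagrams $H(n,m,\Sigma\sp{p}\hy)$ and the bookkeeping needed to reconstruct the $E\sp{r}$-term, which the paper leaves implicit.
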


%
%
\sect{The Spectral Sequence of a Cosimplicial Object}
\label{cssrfcs}

We now investigate the dual to the spectral sequences considered so far:
namely, the homotopy spectral sequence of a cosimplicial object in an
\wwb{\infty,1}category.  As in \cite{BMeadS}, we require a description which allows
us to analyze the differentials in the spectral sequence, when applied to a
representative in the \ww{E\sb{1}}-term \wh much as we did for in the simplicial
case in Sections \ref{csssso} and \ref{cssl}.

This was discussed briefly in \cite[Section 9]{BMeadS}, but the treatment there is
not sufficient for our purposes here, which depend on three basic requirements:
\begin{enumerate}
\renewcommand{\labelenumi}{(\arabic{enumi})~}
\item We want the differentials \wh and thus the spectral sequence as a
  whole \wh to depend only on the underlying \emph{restricted} cosimplicial object
  (forgetting the codegeneracies).
\item It should be possible to recover the $r$-th differential \wb{r\geq 2} from
the \wwb{r-2}Postnikov truncation in the associated simplicial category $\mX$.
\item We want a model-independent, and in particular homotopy invariant,
  description of the differentials.
\end{enumerate}
With these goals in mind, we now  give a more detailed construction from scratch:

\begin{mysubsection}{Cosimplicial objects in quasi-categories}
\label{scoqc}
Suppose that we have a pointed, locally presentable quasi-category $X$, and a
compact, connected (abelian) homotopy cogroup object \w{\fy=\Sigma\hy}
as in \S \ref{sssso}.
Given a cosimplicial object \w{\xu\in cX} (see \S \ref{snac}), we obtain
a (homotopy coherent) pointed cosimplicial space \w{\wu:=\MAP\sb{X}(\hy,\xu)}
in \w[,]{c\Sa:=\Sa\sp{B(\Del)}} using Lemma \ref{lem8.2}. By \cite[Theorem 6.7]{Riehl1},
we can associate to \w{\wu} a homotopy coherent diagram in Kan complexes.
Using homotopy coherence theory (see \cite[Section 9]{GJarS}), we can replace this in
turn with an equivalent strict cosimplicial space \w[.]{\Wu} We then define the
spectral sequence associated to \w{\lra{\xu,\fy}} to be the Bousfield-Kan homotopy
spectral sequence of \w{\Wu} (more precisely: a Reedy fibrant replacement thereof),
with
\begin{myeq}\label{eqbkss}
E\sb{2}\sp{n,n+p}~=~\pi\sp{n}\pi\sb{n+p}\Wu~\cong~
~\pi\sp{n}\pi\sb{n+p}\MAP\sb{X}(\hy,\xu)~\cong~
\pi\sp{n}[\Sigma\sp{n+p}\hy,\xu]
\end{myeq}
\noindent (the indexing has been chosen because this term contributes to
\w[),]{\pi\sb{p}\Tot\Wu} with
\w[.]{d\sb{r}:E\sb{r}\sp{n,n+p}\to E\sb{r}\sp{n+r,n+p+r-1}}
See \cite[IX]{BKanH} for further details.

We again take the point of view, explained in \S \ref{sssso}, that the differential
\w{d\sb{r}(\gamma)} for \w{\gamma\in E\sb{r}\sp{n,n+p}} is to be described in
terms of all possible values \w{\fd\sb{r}[f]\in E\sb{1}\sp{n+r,n+p+r-1}} for
the various representatives \w{[f]\in E\sb{1}\sp{n,n+p}} of $\gamma$.

In light of the above, for the remainder of this section we fix a Reedy fibrant
and cofibrant cosimplicial space \w[.]{\Wu\in\Sa\sp{\Delta}} Because
\w{\pi\sb{k}\bW\sb{n}\cong[\Sigma\sp{k}\hy,x\sp{n}]} and $\fy$ is an abelian cogroup
object in $X$, \w{\pi\sb{k}\bW\sb{n}} is an abelian group for each \w{n\geq 0} and
\w[.]{k\geq 1}
\end{mysubsection}

It is possible to develop a full description of the spectral sequence of \w{\Wu}
(and indeed of \w[)]{\lra{\xu,\fy}} to the category of cochain complexes in \w[,]{\Sa}
as we did in the simplicial case in \S \ref{scc} (see \cite{BBSenH}).
However, in the interests of brevity we describe only the cosimplicial
version of \S \ref{ssocc}:

\begin{mysubsection}{Cosimplicial objects and cochain complexes}
\label{scoch}
If $\C$ is a pointed category, let \w{\cCh(\C):=\Ch(\C\op)} denote the category of
(non-negatively graded) cochain complexes in $\C$.

The \emph{$n$-th normalized cochain object} of a cosimplicial object \w{\Wu} in \w{c\C}
is defined
\begin{myeq}\label{eqnormcoch}
N\sp{n}(\Wu)~:=~\bigcap\sb{i=0}\sp{n-1}\Ker(s\sp{i}:\bW\sp{n}\to\bW\sp{n-1})~,
\end{myeq}
\noindent and we have
\begin{myeq}\label{eqceone}
E\sb{1}\sp{n,p}~\cong~\pi\sb{p}N\sp{n}(\Wu)
\end{myeq}
\noindent in our spectral sequence (see \cite[X, 6.3(i)]{BKanH}).

Alternatively, if we denote by \w{D\sp{n}(\Wu)} the (homotopy) image of
$$
\coprod\sb{i=1}\sp{n-1}\,\bW\sp{n}~\xra{\bot\sb{i}\,d\sp{i}}~\bW\sp{n}~,
$$
the \emph{$n$-th Moore cochain object} of \w{\Wu}  is defined to be the (homotopy)
cofiber
\begin{myeq}\label{eqmoorecc}
C\sp{n}(\Wu)~:=~\Coker(D\sp{n}(\Wu)\to\bW\sp{n})~,
\end{myeq}
\noindent with differential \w{\delta\sp{n}:C\sp{n}\Wu\to C\sp{n+1}(\Wu}
induced by \w[.]{d\sp{0}}

Note that the Moore cochain functor \w{\Cus:c\sp{+}\C\to\cCh{\C}} has a right
adjoint (and left inverse) \w[.]{\E:\cCh{\C}\to c\sp{+}\C}
Likewise, the forgetful functor \w{\U:c\C\to c\sp{+}\C} (induced by
\w[)]{\rDel\hra\Del} has a right adjoint \w{\F:c\sp{+}\C\to c\C} adding codegeneracies
(see \cite[\S 1.8]{BSenH}).
\end{mysubsection}

\begin{lemma}\label{lem7.1}
If a cosimplicial space \w{\Wu\in c\Sa} consisting of Eilenberg-MacLane spaces of type $n$
in each simplicial degree, and all coface and codegeneracy maps are homomorphisms,
then \w[.]{\bW\sp{n}\simeq N\sp{n}(\Wu) \times D\sp{n}(\Wu)}
\end{lemma}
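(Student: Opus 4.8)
The plan is to obtain this by transporting the Dold--Kan/Eilenberg--Zilber splitting of a cosimplicial abelian group through the Eilenberg--MacLane construction; the only real work is then in the cosimplicial bookkeeping.

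First I would note that the hypotheses say precisely that, up to levelwise equivalence and compatibly with all structure maps, \w{\Wu} is of the form \w{K(A\sp{\bullet},n)} for a cosimplicial abelian group \w[:]{A\sp{\bullet}:=\pi\sb{n}\Wu} each \w{\bW\sp{k}} carries a simplicial abelian group structure with sole nonvanishing homotopy group \w[,]{A\sp{k}=\pi\sb{n}\bW\sp{k}} and as the cofaces and codegeneracies are homomorphisms they are induced, in the homotopy category of simplicial abelian groups, by group homomorphisms assembling the \w{A\sp{k}} into a cosimplicial abelian group. Since the constructions of \S\ref{scoch} and the assertion are homotopy--invariant, I may replace \w{\Wu} by \w[.]{K(A\sp{\bullet},n)} Here \w{K(-,n)\colon\Ab\to s\Ab} denotes the usual Eilenberg--MacLane functor applied degreewise; it is additive and \emph{exact}, being the composite of \w{A\mapsto A[n]} with the Dold--Kan equivalence \w[,]{\Gamma\colon\Ch\sb{\geq 0}(\Ab)\xra{\simeq}s\Ab} so it carries finite direct sums to products and preserves kernels, images and cokernels.

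Next I would invoke the algebraic splitting. Viewing \w{A\sp{\bullet}} as a simplicial object in the opposite category \w{\Ab\op} and applying Dold's decomposition \cite[Corollary (1.12)]{DolH} there --- the result used to derive \wref{eqsplatch} --- produces a natural \emph{split} short exact sequence \w{0\to D\sp{n}(A\sp{\bullet})\to A\sp{n}\to C\sp{n}(A\sp{\bullet})\to 0} of abelian groups (the cochain form of the Dold--Kan theorem; cf.\ \cite[X]{BKanH}), together with the standard natural isomorphism \w{C\sp{n}(A\sp{\bullet})\cong N\sp{n}(A\sp{\bullet})} identifying the Moore cochain object with the conormalization \w[.]{\bigcap\sb{i=0}\sp{n-1}\Ker(s\sp{i})} Thus \w[,]{A\sp{n}\cong N\sp{n}(A\sp{\bullet})\oplus D\sp{n}(A\sp{\bullet})} naturally in \w[.]{A\sp{\bullet}}

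Finally I would apply \w{K(-,n)} and match the summands with the objects of \S\ref{scoch}. Since \w{K(-,n)} is exact and \w{K(s\sp{i},n)} (resp.\ \w[)]{K(d\sp{i},n)} is the codegeneracy (resp.\ coface) of \w[,]{\Wu=K(A\sp{\bullet},n)} applying it to the algebraic constructions reproduces the space-level ones: by \wref{eqnormcoch} we get \w[,]{K(N\sp{n}(A\sp{\bullet}),n)=\bigcap\sb{i}\Ker(s\sp{i}\colon\bW\sp{n}\to\bW\sp{n-1})=N\sp{n}(\Wu)} and likewise \w{K(D\sp{n}(A\sp{\bullet}),n)=D\sp{n}(\Wu)} as in \wref[,]{eqmoorecc} the ``(homotopy)'' qualifier there being harmless since \w{K(-,n)} sends the split sequence above to a split fibration sequence of products of Eilenberg--MacLane simplicial abelian groups. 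Applying \w{K(-,n)} to the isomorphism of the previous paragraph now yields \w[,]{\bW\sp{n}\simeq K(N\sp{n}(A\sp{\bullet}),n)\times K(D\sp{n}(A\sp{\bullet}),n)=N\sp{n}(\Wu)\times D\sp{n}(\Wu)} as required. I expect the main obstacle to be precisely this matching: verifying that the conormalization \w{\bigcap\Ker(s\sp{i})} and the Moore--cochain cokernel of \S\ref{scoch}, with the exact index ranges written there, are the two summands delivered by Dold's splitting \emph{in the opposite abelian category}, rather than some reindexed variant.
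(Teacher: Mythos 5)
Your argument rests on the same key input as the paper's proof \wwh the dual Dold--Kan splitting of the codegenerate part \wwh but you route it through a strictification step that is both unnecessary and, as written, the one place where the argument has a gap. The paper's proof is essentially one line: since each $\bW\sp{n}$ is a simplicial abelian group and all cofaces and codegeneracies are homomorphisms, $\Wu$ is a cosimplicial object in the \emph{abelian category} of simplicial abelian groups, so the dual of \cite[Theorem III.2.1]{GJarS} (equivalently, Dold's decomposition applied in the opposite category, levelwise in simplicial abelian groups rather than in $\Ab$) already gives a natural isomorphism $N\sp{n}(\Wu)\cong\bW\sp{n}/D\sp{n}(\Wu)$, hence a splitting $\bW\sp{n}\cong N\sp{n}(\Wu)\times D\sp{n}(\Wu)$ on the nose. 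In particular there is no need to replace $\Wu$ by $K(A\sp{\bullet},n)$ for $A\sp{\bullet}=\pi\sb{n}\Wu$ and then transport the splitting through the exact functor $K(-,n)$: Dold's splitting is not special to $\Ab$.

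The gap is your opening reduction: ``since the assertion is homotopy-invariant, I may replace $\Wu$ by $K(A\sp{\bullet},n)$.'' The assertion involves $N\sp{n}(\Wu)=\bigcap\sb{i}\Ker(s\sp{i})$, a strict limit construction, and a levelwise weak equivalence of cosimplicial objects (your zig-zag through the good truncation $\Wu\leftarrow\tau\sb{\leq n}\Wu\rightarrow K(A\sp{\bullet},n)$) does not obviously induce an equivalence on $N\sp{n}$: a sub-(simplicial abelian group) of an Eilenberg--MacLane object can a priori have homotopy above degree $n$, so $\tau\sb{\leq n}N\sp{n}(\Wu)\to N\sp{n}(\Wu)$ need not be an equivalence just because $\tau\sb{\leq n}\bW\sp{n}\to\bW\sp{n}$ is. The clean way to see that $N\sp{n}$ \emph{is} invariant here is that it is a natural direct summand of $\bW\sp{n}$ \wwh which is exactly the statement being proved, so your invariance claim is circular as stated. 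The repair is simply to delete the replacement step and apply the splitting directly to $\Wu$ in simplicial abelian groups, i.e.\ the paper's argument; your final paragraph (identifying $\bigcap\Ker(s\sp{i})$ with the conormalization and the Moore-cochain cokernel with the codegenerate summand, and noting the ``homotopy'' qualifiers are harmless for a split sequence) then goes through verbatim with $K(-,n)$ replaced by the identity.
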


\begin{proof}
The dual of  \cite[Theorem III.2.1]{GJarS} yields an isomorphism
\w[,]{N\sp{n}(\xu)\simeq\bW\sp{n}/D\sp{n}(\Wu)} and thus a splitting
of  \w[.]{\bW\sp{n}\to\bW\sp{n}/D\sp{n}(\Wu)}
\end{proof}

\begin{prop}\label{lem7.2}
For \w{\Wu} as in \S \ref{scoqc}, we have a homotopy equivalence
\w[.]{\bW\sp{n}\simeq N\sp{n}(\Wu) \times D\sp{n}(\Wu)}
\end{prop}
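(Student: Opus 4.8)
The plan is to reduce Proposition \ref{lem7.2} to the strictly algebraic situation of Lemma \ref{lem7.1} by replacing $\Wu$ with a levelwise Postnikov-type simplicial model that is a product of Eilenberg--MacLane spaces in each cosimplicial degree, while keeping track of the coface and codegeneracy maps. First I would recall from \S \ref{scoqc} that each $\bW\sp{n}$ has abelian homotopy groups $\pi\sb{k}\bW\sp{n}\cong[\Sigma\sp{k}\hy,x\sp{n}]$ for $k\geq 1$, so each $\bW\sp{n}$ is (weakly equivalent to) a simple space, in fact an infinite-loop-like space in the range we care about; more precisely, since $\fy=\Sigma\hy$ is an abelian cogroup object, the homotopy groups $\pi\sb{k}\bW\sp{n}$ carry a natural abelian-group structure compatible with all the cosimplicial structure maps, so $\Wu$ is a cosimplicial object in simple spaces with $\pi\sb{\ast}\Wu$ a cosimplicial graded abelian group.

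Next I would observe that the desired splitting is a statement about a single cosimplicial degree $n$, but one that must be compatible enough with the codegeneracies $s\sp{i}$ (for $i<n$) to make sense of $N\sp{n}(\Wu)$ and $D\sp{n}(\Wu)$ as in \eqref{eqnormcoch}. The key reduction: the inclusions $D\sp{n}(\Wu)\hookrightarrow\bW\sp{n}$ and the projection onto $N\sp{n}(\Wu)=\bigcap\sb{i}\Ker(s\sp{i})$ are, by the dual of the classical simplicial Dold--Kan normalization argument (the dual of \cite[Theorem III.2.1]{GJarS}, exactly as invoked in Lemma \ref{lem7.1}), split \emph{as soon as} the relevant structure maps are group homomorphisms on the nose. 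So the plan is to produce a strictification: replace $\Wu$ by a Reedy fibrant cosimplicial \emph{simplicial abelian group up to homotopy}, or rather pass to a functorial stagewise Postnikov system $\{P\sp{k}\bW\sp{n}\}\sb{k}$ in which each $P\sp{k}\bW\sp{n}$ is a product of Eilenberg--MacLane spaces and the coface/codegeneracy maps are induced by homomorphisms of the $\pi\sb{\ast}$. Since Postnikov sections are functorial and preserve products and the splittings in question are built out of the group structure alone, the splitting $\bW\sp{n}\simeq N\sp{n}(\Wu)\times D\sp{n}(\Wu)$ then follows by applying Lemma \ref{lem7.1} at each Postnikov stage and passing to the (homotopy) limit over $k$; the relevant $\lim\sp{1}$-terms vanish because the homotopy groups are abelian and the tower of $\pi\sb{1}$'s is Mittag-Leffler (indeed all groups involved are abelian, and $N\sp{n}$, $D\sp{n}$ split off compatibly so the tower is split at each finite stage).

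The main obstacle I expect is the strictification step: producing, in a homotopy-coherent setting, a model of $\Wu$ that is degreewise a product of Eilenberg--MacLane spaces \emph{with strictly additive structure maps}, so that Lemma \ref{lem7.1} applies verbatim. The honest route is to use that the homotopy category of simple spaces with abelian fundamental group is, through each Postnikov stage, governed by (co)chain complexes of abelian groups --- this is the content one would normally extract from a cosimplicial Dold--Kan / $k$-invariant analysis, and it is available here because $\pi\sb{\ast}\Wu$ is a cosimplicial abelian group and the $k$-invariants are natural. An alternative, perhaps cleaner, route is to invoke the cochain-complex machinery alluded to just before \S \ref{scoch} (the cosimplicial analogue of \S \ref{scc}, cf.\ \cite{BBSenH}): the adjunction $\Cus\colon c\sb{+}\C\rightleftarrows\cCh{\C}\colon\E$ is homotopy meaningful on the relevant subcategories, and the natural map $\bW\sp{n}\to C\sp{n}(\Wu)$ together with a section coming from the cofibrancy of $\Wu$ exhibits $\bW\sp{n}\simeq N\sp{n}(\Wu)\times D\sp{n}(\Wu)$ directly, with $D\sp{n}(\Wu)$ the homotopy colimit over iterated cofaces as in \eqref{eqmoorecc}. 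I would present the argument via this second route, using Lemma \ref{lem7.1} only as the model case that motivates the splitting and handles the Eilenberg--MacLane building blocks, and filling in the passage to general $\Wu$ by the Postnikov-tower/$\lim\sp{1}$ argument sketched above.
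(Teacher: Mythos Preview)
Your overall strategy \wh Postnikov-tower induction, Lemma \ref{lem7.1} at each stage, then a passage to the homotopy limit \wh is exactly the paper's. The difference lies in how you execute the inductive step, and here you have introduced an obstacle that the paper simply sidesteps.

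You propose to replace each Postnikov section \w{\Po{k}\bW\sp{n}} by a product of Eilenberg--Mac~Lane spaces with strictly additive cosimplicial structure maps, so that Lemma \ref{lem7.1} applies verbatim to the whole section. This is the ``strictification'' you flag as the main obstacle, and rightly so: a Postnikov section \w{\Po{k}X} is not a product of Eilenberg--Mac~Lane spaces unless the $k$-invariants vanish, and nothing in the hypotheses forces that. The paper never attempts this. Instead it uses the fibre sequence
\[
\Fu\sb{m}~\to~\Po{m}\Wu~\to~\Po{m-1}\Wu
\]
in which only the \emph{fibre} \w{\Fu\sb{m}} is degreewise Eilenberg--Mac~Lane (of a single type $m$), so Lemma \ref{lem7.1} applies to \w{\Fu\sb{m}} alone. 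Since \w{D\sp{n}} is a homotopy colimit and homotopy colimits in \w{\Sa} commute with fibre sequences, one obtains a comparison of fibre sequences whose outer vertical maps are equivalences by Lemma \ref{lem7.1} and the induction hypothesis; the middle one is then an equivalence. No strictification or $k$-invariant analysis is needed.

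For the passage to the limit the paper also takes a cleaner route than your \w{\lim\sp{1}}-argument: it observes that \w{N\sp{n}} and \w{D\sp{n}} are finite homotopy (co)limits, and filtered homotopy limits commute with these in \w[,]{\Sa} so the splitting at each finite Postnikov stage passes directly to \w[.]{\holim\,\Po{m}\Wu\simeq\Wu} Your second ``alternative route'' via the cochain adjunction is not what the paper does and is too vague as written to assess.
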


\begin{proof}
We let \w{\Po{m}:\Sa\to\Sa} denote the $m$-th Postnikov section functor.
We prove by induction on \w{m\geq 0} that the statement holds for \w[:]{\Po{m}(W\sp{n})}

The case \w{m = 0} follows from Lemma \ref{lem7.1} and the assumption that
\w{\pi\sb{0}\bW\sp{n}} is an abelian group. In step $m$, we have a fibre
sequence \w[,]{\Fu\sb{m}\to\Po{m}\Wu\to\Po{m-1}\Wu} where \w{\Fu\sb{m}} is an
Eilenberg-Mac~Lane space in each cosimplicial degree.

Since (homotopy) colimits commute with homotopy fibre sequences in \w[,]{\Sa} the functor
\w{D\sp{n}} commutes with fibre sequences, and we have a comparison of fibre sequences:
\begin{equation*}
\xymatrix@R=15pt@C=20pt{
D\sp{n}\Fu\sb{m} \times N\sp{n}\Fu\sb{m}  \ar[d]  \ar[r] &
D\sp{n}\Po{m}\Wu \times N\sp{n}\Po{m}\Wu \ar[d] \ar[r] &
D\sp{n}\Po{m-1}\Wu \times N\sp{n}(\Po{m-1}(\Wu)) \ar[d] \\
F\sb{m}\sp{n} \ar[r] & \Po{m}W\sp{n} \ar[r] & \Po{m-1}W\sp{n}
}
\end{equation*}
The left and right vertical maps are homotopy equivalences, by Lemma \ref{lem7.1}
and by the induction hypothesis, respectively, so the middle one is as well.

Since filtered homotopy limits and finite homotopy (co)limits of spaces commute,
\w{D\sp{n}} and \w{N\sp{n}} commute with filtered homotopy limits. Thus
\begin{equation*}
\begin{split}
\holim\ \Po{n}\bW\sp{n}~&\simeq~\holim\,[ N\sp{n}(\Po{n}(\Wu))\times D\sp{n}(\Po{n}(\Wu))]\\
&\simeq~N\sp{n}(\holim \Po{n}\Wu)\times D\sp{n}(\holim\Po{n}\Wu)
~=~N\sp{n}(\Wu)\times D\sp{n}(\Wu)~,
\end{split}
\end{equation*}
\noindent which completes the proof.
\end{proof}

From \wref{eqmoorecc} we deduce the following generalization of the dual of
\cite[Corollary (1.12)]{DolH}:

\begin{corollary}\label{cmoorenorm}
The natural map \w{\Cus(\Wu)\to N\sp{\ast}(\Wu)} is a levelwise weak equivalence.
\end{corollary}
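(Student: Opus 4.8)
The goal is to show that the natural comparison map $\Cus(\Wu) \to N\sp{\ast}(\Wu)$ is a levelwise weak equivalence, i.e.\ that in each cosimplicial degree $n$ the map $C\sp{n}(\Wu) \to N\sp{n}(\Wu)$ is a weak equivalence of pointed spaces. Recall that $C\sp{n}(\Wu)$ is defined in \wref{eqmoorecc} as the (homotopy) cofiber of the map $D\sp{n}(\Wu) \to \bW\sp{n}$ induced by the cofaces $d\sp{i}$ for $1 \le i \le n-1$, while $N\sp{n}(\Wu)$ is the intersection of the kernels of the codegeneracies $s\sp{i}$ by \wref[.]{eqnormcoch} So the content is the dual Dold--Kan statement ``Moore cochains $\simeq$ normalized cochains'' in the homotopy-invariant setting of \w[.]{\Sa} The key input is Proposition \ref{lem7.2}, which gives a splitting \w[.]{\bW\sp{n}\simeq N\sp{n}(\Wu)\times D\sp{n}(\Wu)}

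First I would fix $n$ and use the splitting of Proposition \ref{lem7.2}: under \w[,]{\bW\sp{n}\simeq N\sp{n}(\Wu)\times D\sp{n}(\Wu)} the subspace \w{D\sp{n}(\Wu)} corresponds (up to homotopy) to the second factor, and the inclusion \w{D\sp{n}(\Wu)\to\bW\sp{n}} is identified with the inclusion of that factor (this compatibility is exactly what the proof of \ref{lem7.2} establishes, since \w{D\sp{n}} is built from the coface maps and the splitting is natural in the fibration tower). Taking the cofiber \wh that is, the quotient \w{\bW\sp{n}/D\sp{n}(\Wu)} \wh of a product projected onto its first factor yields \w{N\sp{n}(\Wu)} up to weak equivalence. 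Concretely, the composite
$$
N\sp{n}(\Wu)~\hookrightarrow~\bW\sp{n}~\longrightarrow~C\sp{n}(\Wu)~=~\Coker(D\sp{n}(\Wu)\to\bW\sp{n})
$$
is, after the splitting, the inclusion of the first factor followed by the projection killing the second, hence a weak equivalence; and one checks this composite agrees (up to the natural identifications) with the inverse of the comparison map \w[.]{\Cus(\Wu)\to N\sp{\ast}(\Wu)} This is the dual of the argument already used in Lemma \ref{lem7.1}, where the same splitting produced \w[;]{N\sp{n}(\xu)\simeq\bW\sp{n}/D\sp{n}(\Wu)} here we merely upgrade ``strict EM-space'' to ``arbitrary \w{\Wu} as in \S\ref{scoqc}'' by invoking Proposition \ref{lem7.2} in place of Lemma \ref{lem7.1}.

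Two points deserve care. First, since \w{D\sp{n}(\Wu)} is defined as the \emph{homotopy} image of \w{\coprod\sb{i=1}\sp{n-1}\bW\sp{n}\to\bW\sp{n}} and \w{C\sp{n}(\Wu)} as a homotopy cofiber, I should work with a Reedy fibrant-and-cofibrant model (as fixed in \S\ref{scoqc}) so that the point-set cofiber computes the homotopy cofiber and the splitting of \ref{lem7.2} is honest; this is automatic from the standing hypotheses. Second, I must verify that the map coming from the degeneracy-kernel description and the map coming from the cofiber description are the \emph{same} natural transformation \w[,]{\Cus(\Wu)\to N\sp{\ast}(\Wu)} not merely abstractly equivalent objects \wh but this is the usual compatibility of the two descriptions of normalization (the relation \w{s\sp{i}d\sp{j}=\id} for appropriate \w{i,j} shows the Moore-cochain quotient factors through the degeneracy-kernel inclusion), and it is exactly the dual of the classical statement \cite[Theorem III.2.1]{GJarS} invoked in \ref{lem7.1}. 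The main obstacle, such as it is, is bookkeeping: making the naturality and the identification of the two maps precise while carrying the splitting of Proposition \ref{lem7.2} through the cofiber construction; there is no genuinely hard homotopy-theoretic step once \ref{lem7.2} is in hand.
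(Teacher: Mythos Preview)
Your proposal is correct and follows exactly the route the paper intends: the corollary is stated immediately after Proposition \ref{lem7.2} with no separate proof, the phrase ``From \wref{eqmoorecc} we deduce'' being the entire argument, and you have simply spelled out that deduction \wh the splitting \w{\bW\sp{n}\simeq N\sp{n}(\Wu)\times D\sp{n}(\Wu)} identifies the cofiber \w{C\sp{n}(\Wu)=\Coker(D\sp{n}(\Wu)\to\bW\sp{n})} with \w[.]{N\sp{n}(\Wu)} Your additional care about homotopy versus strict cofibers and the naturality of the comparison map is appropriate but not something the paper makes explicit.
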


\begin{mysubsection}{The \w{\Tot} tower}
\label{stott}
Let \w{\Du\in\Set\sp{\Delta}} denote the cosimplicial space having
\w{\Del\sp{n}} in degree $n$, and recall that \w{\Tot(\Wu):=\map\sb{c\Ss}(\Du,\Wu)}
for \w{\Du} as in \S \ref{snac}, where \w{\map\sb{c\Ss}} is the simplicial enrichment
for the Reedy model structure on \w[.]{c\Ss}
Similarly, \w[.]{\Tot\sp{n}(\Wu):=\map\sb{c\Ss}(\sk{n}\Du,\Wu)}

Thus a $k$-simplex of \w{\Tot\sp{n}(\Wu)} is a choice of maps
\begin{equation}\label{eqone}
f\sb{m} :\sk{n}\Del\sp{m} \times \Deln{k} \to \bW\sp{m}
\end{equation}
for each \w{m\geq 0} such that
$$
f\sp{j}\circ(\sk{n}\phi \times\id) = \bW(\phi) \circ
f\sp{m} :\sk{n}\Del\sp{m} \times \Deln{k} \to \bW\sp{j}~.
$$
We use the notations \w{\Del\sp{m}} and \w{\Deln{m}} for the $m$-simplex thought
of as a space and as a combinatorial book-keeping device, respectively.

Since \w[,]{\sk{n}(\Del\sp{N})=\colim\sb{k\leq n}\Del\sp{k}}
the map of simplicial sets is completely determined by the maps \w{f\sb{k}}
for \w[.]{k\leq n}

A representative $f$ of a homotopy class \w{[f]\in\pi\sb{k}\Tot\sp{n}\Wu} is determined
by a collection of maps as above, whose restriction to
\w{\sk{n}\Del\sp{m}\times \partial \Deln{k}} is $0$.
Similarly, a homotopy \w{F:f\sim f'} between two representatives of
\w{[f]\in\pi\sb{k}\Tot\sp{n}\Wu} is determined by a collection of compatible maps
$$
F\sb{m} :\sk{n}\Del\sp{m}\times\Deln{k}\times[0,1]\to\bW\sp{m}
$$
for \w[.]{m \le n} The homotopy groups of \w{\Tot\sp{n}\Wu} are thus determined
by the truncation \w{\tnk{h}{\le n}\Wu} of \w{\Wu} in cosimplicial dimensions
$\leq n$.

We write \w{\Wu\bp{n}} for the Reedy fibrant replacement of the left Kan extension of
\w{\tnk{h}{\le n}(\Wu)} to an object of \w[.]{\Ss\sp{\Delta}} By the previous remark,
\w{\pi\sb{\ast}\Tot\sp{n}\Wu} depends only on this truncation: that is, the natural map
\w{\Tot\sp{n}\Wu\bp{n}\to\Tot\sp{n}\Wu} is an equivalence.
Thus, we can identify the homotopy spectral sequence of the cosimplicial space \w{\Wu}
with the spectral sequence of the tower of fibrations
\begin{myeq}\label{tottower}
  \cdots\to\Tot\sp{n}(\Wu\bp{n})\to\Tot\sp{n-1}(\Wu\bp{n-1})\to\cdots\to
  \Tot\sp{0}(\Wu\bp{0}~.
\end{myeq}

Now let \w[.]{\oW{n}:=N\sp{n}\Wu\bp{n}} We extend the usual notation
\w{P\Omega\sp{n}} (for the composite of the path space functor
with $n$-fold loop space, when \w[)]{n\geq 0} by letting \w{P\Omega\sp{-1}X:=X} and
\w{P\Omega\sp{n}:=\ast} for \w[.]{n <-1} We then set
$$
M\sp{r}\bp{n}\oW{n}:=\prod\sb{0 \le k \le r}
\prod\sb{0 \le i\sb{1} < i\sb{1} \cdots < i\sb{k} \le r} P\Omega\sp{n+k-r-1}\oW{n}~,
$$
where the codegeneracy map \w{s\sp{t}: M\sp{r}\bp{n}\oW{n}\to M\sp{r}\bp{n-1}\oW{n}} is
given on the factor corresponding to \w{I=(i\sb{1}, \cdots, i\sb{k})}
by projection onto the unique factor \w{J=(j\sb{1}, \cdots j\sb{k+1})}
such that \w[.]{s\sp{I}\circ s\sp{t}=s\sp{J}}

Thus \w{\Mu\bp{n}\oW{n}} is obtained by applying the functor \w{\F:c\sp{+}\Sa\to c\Sa}
of \S \ref{scoch} to the restricted cosimplicial object
\w[,]{\Qu\in c\sb{+}\Sa} where \w[.]{\bQ\sp{m} := P\Omega\sp{n-m-1}\oW{n}}
In particular, the map into \w{M\sp{i}\bp{n}\oW{n}} for \w{i>n} is
determined by the cosimplicial identities, applying an appropriate
iterated codegeneracy to \w{M\sp{i}\bp{n}\oW{n}} until we land in \w[.]{\oW{n}}

We claim that the fibre sequence
$$
N\sp{n}(\Wu\bp{n})\to\Tot\sp{n}(\Wu\bp{n})\to\Tot\sp{n-1}(\Wu\bp{n-1})
$$
can be identified with the fibre sequence
$$
N\sp{n}(\Wu\bp{n}) \to\Tot\sp{n}(\Wu\bp{n-1} \times \Mu\bp{n}\oW{n})
\to\Tot\sp{n-1}(\Wu\bp{n-1})~.
$$
\noindent In fact, we have a natural map
\w[,]{\Wu\bp{n-1} \times\Mu\bp{n}\oW{n}\to\Wu\bp{n}}
which is an equivalence in cosimplicial dimensions \w[:]{m \leq n}

The case \w{m = n} is Proposition \ref{lem7.2},  and for \w[,]{m < n} this holds because
\w{\bW\bp{n-1}\sp{m} = \bW\bp{n}\sp{m}} and \w{M\sp{m}\bp{n}\oW{n}}
is weakly contractible. The result now follows from the description of the
homotopy groups of \w{\Tot\sp{n}} in \S \ref{stott}.

By the preceding paragraph, we can thus assume by induction that
\begin{myeq}\label{eq2}
\Wu\bp{n} \simeq \Wu\sb{[n-1]} \times\Mu\sb{n}\oW{n}
\end{myeq}
\noindent for all \w[.]{n\geq 1}
\end{mysubsection}

We have the following analogue of \S \ref{sdiagdesc}:

\begin{mysubsection}{A diagrammatic description of the differentials}
\label{sdiagdes}
Starting with \w[,]{F\sp{n-1}=d\sp{0}:C\sp{n-1}\bW\bp{n-1}\sp{n-1}
  \to P\Omega\sp{-1}\bW\bp{n-1}\sp{n-1}}
we may define by downward induction a map of complexes
\w{F:\Cus\Wu\bp{n-1}\to\Dus} given by:
\mysdiag[\label{complexes}]{
  C\sp{k+1}\Wu\bp{n-1}\ar[rrr]\sp{F\sp{k+1}} &&& P\Omega\sp{n-k-3}\oW{n} & =
  D\sp{k+1} \\
&&& \Omega\sp{n-k-2}\oW{n} \ar@{^{(}->}[u] & \\
C\sp{k}\Wu\bp{n-1} \ar[uu]\sp{\delta\sp{k}} \ar[rrr]\sb{F\sp{k}} &&&
P\Omega\sp{n-k-2}\oW{n} \ar@{->>}[u] &
=~D\sp{k} \ar[uu]\sp{\delta\sp{k}\sb{D}} \\
&&& \Omega\sp{n-k-1}\oW{n} \ar@{^{(}->}[u] \ar@/_{3.9pc}/[uu]\sb{0} & \\
C\sp{k-1}\Wu\sb{[n-1]}
\ar@{-->}[rrru]\sp{a\sp{k-1}} \ar[uu]\sp{\delta\sp{k-1}}
\ar@{.>}[rrr]_(0.5){F\sp{k-1}} &&& P\Omega\sp{n-k-1}\oW{n} \ar@{->>}[u] &
= D\sp{k-1} \ar[uu]\sp{\delta\sp{k-1}\sb{D}}
}
\noindent Note that \w{F\sp{k}} induces the map \w{a\sp{k-1}} in \wref[,]{complexes}
which must be nullhomotopic in order for \w{F\sp{k-1}} to exist.

Using the splitting from \wref{eq2} and Proposition \ref{lem7.2}, as
in \cite[Section 3]{BBSenH} we can show that there is a fibre sequence
\begin{myeq}\label{afibresequence}
\Sigma\Dup{n}\to\Wu\bp{n}\to \Wu\bp{n-1}\xrightarrow{F\sb{[n-1]}}\Dup{n}~,
\end{myeq}
\noindent where \w{\Dup{n}} is obtained by applying \w{\E\U} of \S \ref{scoch}
to the complex \w{\Dus}
in the right-hand side of diagram \wref{complexes} above, and the map
\w{F\bp{n-1}} is adjoint to the map of complexes given by \wref[.]{complexes}
\end{mysubsection}

\begin{prop}\label{pdiagdiff}
In the situation of \S \ref{scoqc}, an element \w{[f]\in E\sp{n,n+p}\sb{1}}
represented by \wref{eqeoneterm} survives to \w{E\sp{n,n+p}\sb{r+1}}
if and only if \wref{eqeoneterm} extends to a diagram:
\mytdiag[\label{eqerterm}]{
\Deln{n+r-1} \ltimes S\sp{p} \ar[r]\sb<<<<<<<<{g\sb{n+r-1}} &\bW\sp{n+r-1}\bp{n+r-1} \\
\Deln{n+r-2} \ltimes S\sp{p} \ar@<1.5ex>[u]\sp{d\sp{0}}\sb{\cdots}
\ar@<-1.5ex>[u]\sb{d\sp{n+r-1}} \ar[r]_>>>>>>>>{g\sb{n+r-2}}
\ar@{}[d]\sb{\vdots} &\bW\sp{n+r-2}\bp{n+r-2}
\ar@<1.5ex>[u]\sp{d\sp{0}}\sb{\cdots} \ar@<-1.5ex>[u]\sb{d\sp{n+r-1}}
\ar@{}[d]\sb{\vdots} \\
\Deln{n} \ltimes S\sp{p} \ar[r]\sb{g\sb{n}=f\sb{n}} & W\sp{n}\bp{n} \\
0\ar@<1.5ex>[u]\sp{d\sp{0}}\sb{\cdots} \ar@<-1.5ex>[u]\sb{d\sp{n}} \ar[r]
&\bW\sp{n-1}\bp{n} \ar@<1.5ex>[u]\sp{d\sp{0}}\sb{\cdots} \ar@<-1.5ex>[u]\sb{d\sp{n}}
}
\noindent indexed by \w[.]{\rDell{n-1,n+r-1}\times\bone}
\end{prop}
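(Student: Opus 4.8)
The plan is to deduce the statement from the identification, made in \S\ref{stott}, of the homotopy spectral sequence of $\Wu$ with the spectral sequence of the tower of fibrations \eqref{tottower}, together with the splitting \eqref{eq2} and the fibre sequence \eqref{afibresequence}. First I would recall the standard description of the spectral sequence of a tower of fibrations (see \cite[IX]{BKanH}): for the tower $\{\Tot^n(\Wu\bp{n})\}_n$, whose $n$-th fibre over $\Tot^{n-1}(\Wu\bp{n-1})$ is $N^n(\Wu\bp{n})$, a class $[f]\in E^{n,n+p}_1$ represented by the map of \eqref{eqeoneterm} survives to $E^{n,n+p}_{r+1}$ if and only if some representative cocycle lifts, compatibly through $r-1$ further stages of the tower, to a class in $\pi_p\Tot^{n+r-1}(\Wu\bp{n+r-1})$ whose image in $\pi_p\Tot^{n-1}(\Wu\bp{n-1})$ is trivial. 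Thus the whole problem becomes to show that such a compatible lift exists precisely when \eqref{eqeoneterm} extends to a diagram of the form \eqref{eqerterm} over $\rDell{n-1,n+r-1}\times\bone$.

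Next I would make the translation between lifts and diagrams explicit, using the description of $\Tot^m$ from \S\ref{stott}: a $k$-simplex of $\Tot^m(\Wu\bp{m})$ is a compatible family of maps $\sk{m}\Deln{j}\times\Deln{k}\to\bW^j\bp{m}$, and since $\sk{m}\Deln{j}=\Deln{j}$ for $j\le m$, a representative of a homotopy class in $\pi_p$ whose restriction to boundaries vanishes is precisely a family of maps $\Deln{j}\ltimes S^p\to\bW^j\bp{j}$ for $n\le j\le m$, compatible under all cofaces and codegeneracies. This is exactly one ``row'' of \eqref{eqerterm} read downward; the fact that the lift starts from the fibre over $\Tot^{n-1}(\Wu\bp{n-1})$, and that $f_n$ represents a cocycle, is what is recorded by the bottom row $0\to\bW^{n-1}\bp{n}$ of \eqref{eqerterm} together with the vanishing coface maps out of the zero object in degree $n-1$. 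Consequently a diagram \eqref{eqerterm} indexed by $\rDell{n-1,n+r-1}\times\bone$ is the same datum as a compatible lift to stage $n+r-1$, and conversely.

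The main step, and the main obstacle, is the induction over stages that produces (or detects the obstruction to) each successive row, which I would run using the fibre sequence \eqref{afibresequence}. Lifting one further stage, from $\Tot^{m-1}(\Wu\bp{m-1})$ to $\Tot^m(\Wu\bp{m})$, amounts --- via \eqref{afibresequence}, the splitting \eqref{eq2}, and the fact that $\Tot^{m-1}$ preserves homotopy fibre sequences --- to choosing a nullhomotopy of the composite with $F\bp{m-1}\colon\Wu\bp{m-1}\to\Dup{m}$. By the diagrammatic description of \S\ref{sdiagdes} this composite is adjoint to the map $a^{k-1}$ appearing in \eqref{complexes}, so a nullhomotopy of it is exactly the extra datum needed to adjoin the next row $\Deln{m}\ltimes S^p\to\bW^m\bp{m}$ with its prescribed coface and codegeneracy compatibilities; conversely, extending \eqref{eqerterm} by one more row supplies such a nullhomotopy. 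Iterating this $r-1$ times, with base case the statement that $f_n$ represents a cocycle in $N^n(\Wu)$ (the defining property of $E^{n,n+p}_1$), gives the proposition. The delicate point I expect to have to check carefully is the bookkeeping: matching the half-smash notation $\Deln{j}\ltimes S^p$ and the Reedy-fibrant $\Tot$-model with the cochain maps of \eqref{complexes}, and verifying that the index category of the resulting diagram is exactly $\rDell{n-1,n+r-1}\times\bone$ --- no more and no fewer cofaces --- this being the cosimplicial dual of the analysis carried out in \cite[\S 6]{BMeadS}, along whose lines I would organize the argument.
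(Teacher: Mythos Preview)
Your proposal is correct and follows essentially the same route as the paper's own proof: induction on $r$, the base case being the identification of an $E\sb{1}$-representative with the diagram \wref{eqeoneterm} via \wref{eqceone} and the splitting \wref[,]{eq2} and the inductive step being the nullhomotopy criterion coming from the fibre sequence \wref[,]{afibresequence} translated through the adjunction of \S\ref{scoch} into the cochain picture \wref[.]{complexes} The only imprecision is your phrase ``compatible under all cofaces and codegeneracies'': since the index category is \w[,]{\rDell{n-1,n+r-1}\times\bone} only cofaces are present, and the codegeneracies enter only when you pass (as the paper does) from the restricted diagram to the map into \w{\Tot\sp{m}} by freely adjoining them via $\F$.
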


\begin{proof}
By induction on $r$, starting with \w[:]{r=1}

Because \w{\Wu} is Reedy fibrant, the natural map
\begin{myeq}\label{eqnormis}
\pi\sb{\ast}N\sp{n}(\Wu)~\to~ N\sp{n}(\pi\sb{\ast}\Wu)~\cong~C\sp{n}\pi\sb{\ast}(\Wu)
\end{myeq}
\noindent is an isomorphism, by \cite[X, 6.3(ii)]{BKanH} and Corollary \ref{cmoorenorm}.
By \wref[,]{eqceone} we can thus represent \w{[f]\in E\sp{n,n+p}\sb{1}} by a map
\w{\Du\ltimes S\sp{p} \to\Tot\sp{n}\Wu\bp{n}} \wwh in other words, by a sequence
of compatible maps \w{f\sb{i}:\Del\sp{i}\ltimes S\sp{p}\to\bW\bp{n}\sp{m}}
with \w{f\sb{i} = 0} for \w{m < n} (see \cite[\S 4.1]{BBSenH}). This determines a diagram
of pointed simplicial sets indexed by \w{\Dell{n}\times\bone} (in the notation of
\S \ref{snac}), whose restriction to \w{\rDele{n}} is depicted by
\mydiagram[\label{eqeoneterm}]{
\Deln{n} \ltimes S\sp{p} \ar[r]\sb(0.55){f\sb{n}} & \bW\sp{n}\bp{n}\\
\Deln{n-1} \ltimes S\sp{p} \ar@<1.5ex>[u]\sp{d\sp{0}}\sb{\cdots}
\ar@<-1.5ex>[u]\sb{d\sp{n}} \ar[r]\sb(0.55){0} \ar@{}[d]\sb{\vdots} & \bW\sp{n-1}\bp{n}
\ar@<1.5ex>[u]\sp{d\sp{0}}\sb{\cdots} \ar@<-1.5ex>[u]\sb{d\sp{n}} \ar@{}[d]\sb{\vdots} \\
\Deln{0} \ltimes S\sp{p} \ar[r]\sb(0.5){0} & \bW\sp{0}\bp{n}~,
}
\noindent which is equivalent to the case \w{r=1} of \wref[,]{eqerterm} since mapping
out of a zero object does not require choices of homotopies.

On the other hand, in such a commutative diagram, \w{f\sb{n}} factors through
\w{M\sp{n}\bp{n}\oW{n}} as in \wref[,]{eq2} and thus the diagram determines a map
\w{\Du\to\Tot\sp{n}(\Mu\bp{n}\oW{n})} by freely adding codegeneracies.
This in turn determines an element of \w[,]{\Omega\sp{n+p}N\sp{n}\Wu} since the fibre of
\w{\Wu\bp{n}\to\Wu\bp{n-1}} can be identified with \w{\Mu\bp{n}\oW{n}} by
\wref[.]{eq2}

Now assume the statement is true for \w[:]{r-1}
a map \w{g:\Du\to\Tot\sp{n+r-1}(\Wu\bp{n+r-1}} representing
an element of the \ww{E\sb{r-1}}-term of the spectral sequence survives to
the \ww{E\sb{r}}-term if and only if the composite \w{F\sb{[n-1]} \circ g} in
\mydiagram[\label{eqnullhtpyt}]{
  W\sb{[n+r]}\sp{\bullet} \ar[r] & W\sb{[n+r-1]}\sp{\bullet} \ar[rr]\sp{F\sb{[n-1]}} &&
  D\sb{[n-1]}\sp{\bullet} \\
\Du \ltimes S\sp{p}  \ar[ur]\sb{g} & &&
}
\noindent is nullhomotopic, where the horizontal maps are the fibre sequence
\wref[.]{afibresequence}

By the discussion in \S \ref{sdiagdes}, we see that both \w{F\sb{[n-1]}} and
\w{F\sb{[n-1]}\circ g} are adjoint, respectively, to right hand map and the
composite in the diagram:
$$
\Cus\F(\Du \ltimes S\sp{p}) \to\Cus\F \Wu\bp{n-1}\to\Dus
$$
They are thus adjoint to the maps $\psi$ and \w{\psi \circ \psi'} in the diagram below,
where the horizontal maps can be seen to be a sequence of fibrations.
$$
\xymatrix
{
\F W\sb{[n+r]} \ar[r] & \F W\sb{[n+r-1]} \ar[r]\sb{\psi} &\E \Dus \\
\Du \ltimes S\sp{p}  \ar[ur]\sb{\psi'} & &
}
$$
By adjunction, \w{F\sb{[n-1]}\circ g} is thus nullhomotopic if and only if the
composite \w{\psi \circ \psi'} is nullhomotopic, which holds if and only if
a diagram of the form \ref{eqerterm} exists.  Hence the result.
\end{proof}

\begin{corollary}\label{cor7.9}
Given $X$, $\fy$ and \w[,]{\xu} with \w{\Wu} a Reedy fibrant strictification of
\w[,]{\wu:=\MAP\sb{X}(\hy,\xu)\in c\Sa} as in \S \ref{scoqc}, a class
\begin{myeq}\label{eqfrepres}
[f]\in[\Sigma\sp{n+p}\fy,x\sp{n}]~=~\pi\sb{n+p}(w\sp{n})~\cong~
\pi\sb{p}(\Omega\sp{n}\bW\sp{n})~
\end{myeq}
\noindent in \w{E\sb{1}\sp{n,n+p}} of the associated spectral sequence survives
to \w{E\sb{r}\sp{n,p}} if and only if it fits into a map \w{B(\rDele{n+r-1}) \to\Sa}
of the form
\mytdiag[\label{eqerrterm}]{
\Deln{n+r-1} \ltimes\Sigma\sp{p}\fy\ar[r]\sb<<<<<<<<{g\sb{n+r-1}} & x\sp{n+r-1} \\
\Deln{n+r-2} \ltimes\Sigma\sp{p}\fy \ar@<1.5ex>[u]\sp{d\sp{0}}\sb{\cdots}
\ar@<-1.5ex>[u]\sb{d\sp{n+r-1}} \ar[r]\sb>>>>>>>>>{g\sb{n+r-2}} \ar@{}[d]\sb{\vdots} &
x\sp{n+r-2}
\ar@<1.5ex>[u]\sp{d\sp{0}}\sb{\cdots} \ar@<-1.5ex>[u]\sb{d\sp{n+r-1}}
\ar@{}[d]\sb{\vdots} \\
\Deln{n} \ltimes\Sigma\sp{p}\fy \ar[r]\sb{g\sb{n}=f\sb{n}} & x\sp{n} \\
\Deln{n-1} \ltimes\Sigma\sp{p}\fy\ar@<1.5ex>[u]\sp{d\sp{0}}\sb{\cdots}
\ar@<-1.5ex>[u]\sb{d\sp{n}}
\ar[r]\sb{0} 
& x\sp{n-2} \ar@<1.5ex>[u]\sp{d\sp{0}}\sb{\cdots}
\ar@<-1.5ex>[u]\sb{d\sp{n}} 
}
\end{corollary}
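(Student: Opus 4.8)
The plan is to obtain Corollary~\ref{cor7.9} as a transcription of Proposition~\ref{pdiagdiff} into the language of the original cosimplicial object $\xu$, by means of the adjunction of Lemma~\ref{lem8.2}. Recall that, by the definition of the spectral sequence of $\lra{\xu,\fy}$ in \S\ref{scoqc}, it is the Bousfield--Kan spectral sequence of the strict Reedy fibrant cosimplicial space $\Wu$, which is a strictification of $\wu := \MAP\sb{X}(\hy,\xu)$. In particular $w\sp{m} = \MAP\sb{X}(\hy,x\sp{m}) \simeq \bW\sp{m}$ for every $m$, and the truncated objects $\bW\sp{m}\bp{m}$ that occur as targets in \eqref{eqerterm} agree, in cosimplicial degree $m$, with $\bW\sp{m}$ (by the analysis of the $\Tot$ tower in \S\ref{stott}), since \eqref{eqerterm} only involves cosimplicial degrees between $n-1$ and $n+r-1$. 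Moreover the isomorphism \eqref{eqnormis}, together with \eqref{eqceone}, identifies a representative $[f] \in E\sb{1}\sp{n,n+p}$ as in \eqref{eqeoneterm} with a class $[f] \in [\Sigma\sp{n+p}\fy, x\sp{n}] \cong \pi\sb{n+p}(w\sp{n})$ as displayed in \eqref{eqfrepres}.

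The main step is then to translate ``existence of an extension'' back and forth using the tensor--cotensor adjunction $\hy\otimes(-) : \Sa \leftrightarrows X : \MAP\sb{X}(\hy,-)$ of Lemma~\ref{lem8.2}, which is an adjunction of quasi-categories and is therefore compatible with all the functoriality appearing in the diagrams. Every source object in \eqref{eqerrterm} is of the form $\Deln{m}\ltimes\Sigma\sp{p}\fy = \hy\otimes(\Deln{m}\ltimes S\sp{p+1})$, hence lies in the essential image of $\hy\otimes(-)$, while the targets $x\sp{m}$ are prescribed, being the entries of $\xu$. Consequently, the data of a diagram of the shape \eqref{eqerrterm} in $X$ amount, by the adjunction, to exactly the data of a diagram of the shape \eqref{eqerterm} in $\Sa$ whose source objects are the simplicial sets $\Deln{m}\ltimes S\sp{p+1}$ and whose targets are $\MAP\sb{X}(\hy,x\sp{m}) = w\sp{m}$, i.e.\ --- after strictification --- the cosimplicial space $\Wu$; and this dictionary matches the coface operators and the $\bone$-direction on both sides because the adjunction does. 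Under it, extending the class \eqref{eqfrepres} to a diagram of the form \eqref{eqerrterm} corresponds precisely to extending \eqref{eqeoneterm} to a diagram of the form \eqref{eqerterm}, so Proposition~\ref{pdiagdiff} yields the claim --- up to the evident shift in the index $r$ between the two statements, and noting, as in the proof of Proposition~\ref{pdiagdiff}, that the portion of the diagram in cosimplicial degrees $< n$ consists of maps out of the zero object and so imposes no additional choices.

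The point that needs genuine care is the transport in the previous paragraph: one must verify that the adjunction of Lemma~\ref{lem8.2} carries not merely individual mapping spaces, but entire \emph{homotopy-coherent diagrams} of the relevant shape, back and forth, compatibly with the cosimplicial face operators and the $\bone$-direction. This is formal --- it follows once one notes that $\hy\otimes(-)$ and $\MAP\sb{X}(\hy,-)$ are functors of quasi-categories and that the whole source column of \eqref{eqerrterm} lies in the essential image of $\hy\otimes(-)$ --- but it must be made explicit, since Proposition~\ref{pdiagdiff} is established entirely inside $\Sa\sp{\Del}$ whereas Corollary~\ref{cor7.9} is phrased in the quasi-category $X$. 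A secondary, purely bookkeeping, hazard is keeping the indexing consistent across \eqref{eqerterm} and \eqref{eqerrterm}: the suspension degrees ($S\sp{p}$ versus $\Sigma\sp{p}\fy$, and the shift by one between them) and the $E\sb{r}$-versus-$E\sb{r+1}$ labelling, where off-by-one errors are easy to commit.
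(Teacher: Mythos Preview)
Your proposal is correct and follows essentially the same route as the paper's proof: identify the targets \w{\bW\sp{m}\bp{m}} with \w[,]{\bW\sp{m}} transport the diagram \wref{eqerrterm} in $X$ to a (homotopy-coherent) diagram of the shape \wref{eqerterm} in \w[,]{\Sa} rectify, and invoke Proposition \ref{pdiagdiff}. The only difference is cosmetic: where you carry out the transport via the adjunction of Lemma \ref{lem8.2} and then appeal to strictification, the paper cites \cite[Theorem 6.7]{Riehl1} (quasi-categorical diagrams versus homotopy-coherent ones) together with the Dwyer--Kan rectification theorem \cite{DKSmitH}; these are two phrasings of the same passage. Your explicit caveat about off-by-one indexing is well taken, as the suspension bookkeeping between \w{S\sp{p}} and \w{\Sigma\sp{p}\fy=\Sigma\sp{p+1}\hy} does require care.
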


\begin{proof}
By \cite[Theorem 6.7]{Riehl1} and the fact that \w{\bW\bp{m}\sp{m} =\bW\sp{m}}
(by construction), a diagram \w{B(\rDele{n+r-1})\to\Sa} as in \wref{eqerrterm}
is equivalent to a homotopy coherent diagram of the form \wref[.]{eqerterm}
But such a diagram is equivalent to a strictly commuting diagram of pointed spaces
by a relative version of the usual  Dwyer-Kan homotopy coherence theorem
(see \cite{DKSmitH}). The result then follows from Proposition \ref{pdiagdiff}.
\end{proof}

\begin{mysubsection}{A combinatorial description of the differentials}
\label{scombdes}
By \cite{BKanS}, the unstable Adams spectral sequence for a space $\bX$ may be
identified with the homotopy spectral sequence of a certain cosimplicial resolution
\w{\Wu} of $\bX$, as in \cite[Chapter X]{BKanS}.

In \cite[Theorem 6.4]{BBSenH}, it was shown that applying the \ww{d\sb{r}}-differential
in this spectral sequence to an element of \w{E\sb{r}\sp{n,n+p}} represented by
\w{[f]\in E\sb{1}\sp{n,n+p}} as in Proposition \ref{pdiagdiff} yields a value
of a certain associated higher cohomology operation. This may be described more generally
in our situation as follows:

Assume we have lifted \w{[f]} as in \wref{eqeoneterm} to a map
\w{g\bp{N}:\Du\ltimes S\sp{p}\to\Wu\bp{N}} as in \wref[,]{eqerterm} for \w[.]{N=n+r-1}
From \wref{afibresequence} we see that \w{g\bp{N}} can be lifted to \w[,]{g\bp{N+1}}
up to homotopy, if and only if there is a nullhomotopy
\w[,]{H:F\bp{N}\circ g\bp{N}\sim 0} determined according to \S \ref{stott} by
a sequence of maps \w{H\sp{k}} fitting into a diagram
\myudiag[\label{eqnulhtpy}]{
\sms{C\Deln{k}}{S\sp{p}} \ar@/^{1.9pc}/[rrrrrd]\sp{H\sp{k}} &&&&&\\
& \hfsm{\Deln{k}}{S\sp{p}} \ar@{_{(}->}[ul]\sb{\delta\sp{0}} \ar[rr]\sp{g\bp{N}\sp{k}} &&
\bW\sp{k}\bp{N} \ar[rr]\sp(0.45){F\sp{k}} && P\Omega\sp{N-k-1}\oW{N+1} \\
\sms{C\Deln{k-1}}{S\sp{p}}
\ar@<2.5ex>[uu]\sp(0.55){Cd\sp{0}}\sp(0.45){=\delta\sp{1}}\sb(0.5){\dotsc}
 \ar@<-2.5ex>[uu]\sb(0.55){Cd\sp{k}}\sb(0.45){=\delta\sp{k+1}}
\ar@/^{3.3pc}/[rrrrrd]\sp{H\sp{k-1}} &&&&&\\
& \hfsm{\Deln{k-1}}{S\sp{p}} \ar@{_{(}->}[ul]\sb{\delta\sp{0}}
\ar@<2.5ex>[uu]\sp(0.5){d\sp{0}}\sb(0.5){\dotsc} \ar@<-2.5ex>[uu]\sb(0.5){d\sp{k}}
\ar[rr]\sp(0,55){g\bp{N}\sp{k-1}} &&
\bW\sp{k-1}\bp{N} \ar[rr]\sp(0.45){F\sp{k-1}} && P\Omega\sp{N-k}\oW{N+1}
 \ar@<3ex>[uu]\sp(0.6){\iota\sb{N-k-1}\circ p}\sb(0.6){=d\sp{0}}
 \ar@<-4ex>[uu]\sb(0.4){(j>0)}\sb(0.6){=d\sp{j}}\sp(0.6){0}
}
\noindent as in \cite[5.5]{BBSenH}. The value of the differential \w{\fd\sb{r}([f])}
is the obstruction to the existence of $H$; it is given by a certain map
\w[,]{\Phi:\sms{\partial\PPc\sp{N+1}\sb{r}}{S\sp{p}}\to\oW{N+1}}
depending only on the given maps \w{F\bp{N}} and \w[.]{g\bp{N}}
Here \w{\PPc\sp{N+1}\sb{r}} is a certain simplicial complex which is PL-equivalent to
an \wwb{N+1}-ball, so \w{\partial\PPc\sp{N+1}\sb{r}} is an $N$-sphere
(see \cite[Definition 5.1]{BBSenH}).

The proof of this result in \cite[Theorem 6.4]{BBSenH} used a specific CW construction
for \w[,]{\Wu} but in fact it is valid for any pair \w{\lra{\xu,\fy}} in a
quasi-category $X$ as in \S \ref{scoqc}. Moreover, the cell structure of
\w{\PPc\sp{N+1}\sb{r}} allows us to show that, up to homotopy, $\Phi$ is determined
inductively by the universal property of the colimit of these cells, providing
a model-independent and homotopy invariant description of the differentials in the
situation of Corollary \ref{cor7.9}, just as we did in \cite[Corollary 6.11]{BMeadS} in
the simplicial case.
\end{mysubsection}

%
%
\sect{The spectral sequence of a cosimplicial object and localization}
\label{ccsloc}
In this section, we obtain analogues of the constructions and results of
Section \ref{cssl} for the spectral sequence of a cosimplicial object \w{\xu}
in a quasi-category $X$.

First, note that from Corollary \ref{cor7.9} we may deduce the following analogue
of Proposition \ref{rtrunc}:

\begin{prop}\label{rctrunc}
Given \w{\fy\in X} and \w{\xu\in cX} as in \S \ref{scoqc} and
\w{[f]\in E\sb{1}\sp{n,n+p}} surviving to \w[,]{E\sb{r}} the element \w{\fd\sp{r}([f])}
in \w{E\sb{1}\sp{n+r,n+p+r-1}} associated to diagram \wref{eqnulhtpy}
\wwh and thus \w{d\sp{r}([f])\in E\sb{r}\sp{n-r,p+r-1}} itself
\wh depends only \w[,]{\Po{r-1}\Map\sb{X}(\Sigma\sp{p+n}\fy,\xd)\sb{f\sb{n}}} which
thus determines \w[,]{E\sb{r+1}} and in particular lets us decide
whether \w{[f]} survives to \w[.]{E\sb{r+1}}
\end{prop}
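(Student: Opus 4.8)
The plan is to transplant the argument of \S\ref{spsss} (which establishes Proposition \ref{rtrunc}) to the cosimplicial setting, using Corollary \ref{cor7.9} in place of Proposition \ref{pdiagdiff} and the obstruction-theoretic description of the differential from \S\ref{scombdes}. First I would isolate exactly what must be controlled: by Corollary \ref{cor7.9}, $[f]\in E\sb{1}\sp{n,n+p}$ survives to $E\sb{r+1}$ precisely when the partial diagram \ref{eqeoneterm} extends to a homotopy-coherent diagram of the shape \ref{eqerrterm}, but one cosimplicial stage longer (indexed by $\rDell{n-1,n+r}\times\bone$); and by \S\ref{scombdes} the representative $\fd\sp{r}([f])\in E\sb{1}\sp{n+r,n+p+r-1}$ of the differential, computed from a chosen extension up to stage $N:=n+r-1$, is the obstruction $\Phi\colon\sms{\partial\PPc\sp{N+1}\sb{r}}{S\sp{p}}\to\oW{N+1}$ to the nullhomotopy $H$ of \ref{eqnulhtpy}, built by an induction over the cells of the PL-ball $\PPc\sp{N+1}\sb{r}$ out of nothing beyond the maps $F\bp{N}$ and $g\bp{N}$ already recorded in the diagram. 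So it suffices to show that the existence of such an extended diagram, and the entire inductive construction of $\Phi$, are detected by $\Po{r-1}\Map\sb{X}(\Sigma\sp{p+n}\fy,\xu)\sb{f\sb{n}}$.

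Next I would pass, via the adjunctions $\Cus\dashv\E$ and $\U\dashv\F$ of \S\ref{scoch}, to the cochain-complex picture of \S\ref{sdiagdes}: the coherent diagram \ref{eqerterm} together with the contractions of \ref{eqnulhtpy} is the datum of a map of truncated cochain complexes $\Cus\F(\Du\ltimes S\sp{p})\to\Cus\F\Wu\bp{N}\to\Dus$ as in \ref{complexes} equipped with nullhomotopies of the maps $a\sp{k-1}$, and $\PPc\sp{N+1}\sb{r}$ is assembled from the same source complexes. The source cochain complex is a resolution only $r-1$ stages deep, so — exactly as in \S\ref{spsss}, where the analogous $\bDs$ was replaced by the $(r-1)$-dimensional $\tDs$ — the loop--suspension adjunction in $\Sa$ converts each source object occurring above (the simplices $\Deln{j}\ltimes S\sp{p}$, the cones $\sms{C\Deln{k}}{S\sp{p}}$ of \ref{eqnulhtpy}, and the cells of $\sms{\PPc\sp{N+1}\sb{r}}{S\sp{p}}$) into a simplicial set of dimension $\le r-1$, while replacing the target $\Wu\bp{N}$ by the corresponding Moore-cochain truncation of $\Map\sb{X}(\Sigma\sp{p+n}\fy,\xu)$ (via Lemma \ref{lem8.2}), based at the component of $f\sb{n}$. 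Since every source then has dimension $\le r-1$ and the fibre of $Y\to\Po{r-1}Y$ is $(r-1)$-connected for any space $Y$, the map $F$, each nullhomotopy $H\sp{k}$, and hence the inductively built $\Phi$ all factor uniquely up to homotopy through the $(r-1)$-Postnikov section of the target, and conversely any such data over $\Po{r-1}\Map\sb{X}(\Sigma\sp{p+n}\fy,\xu)\sb{f\sb{n}}$ lifts back. Thus the existence of the extension \ref{eqerrterm} and, for each such extension, the class $\fd\sp{r}([f])$ — and therefore $d\sp{r}([f])$, whether $[f]$ survives to $E\sb{r+1}$, and the $E\sb{r+1}$-term itself — depend only on $\Po{r-1}\Map\sb{X}(\Sigma\sp{p+n}\fy,\xu)\sb{f\sb{n}}$.

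The step I expect to be the real obstacle is this dimension count. For the simplices $\Deln{j}\ltimes S\sp{p}$ with $j\le N$ it is immediate, but for the cones in \ref{eqnulhtpy}, and above all for the cells of the PL-ball $\PPc\sp{N+1}\sb{r}$ of \cite[Definition 5.1]{BBSenH}, one must track precisely how the $\Omega\sp{j}$-shift that the $j$-th stage of the $\Tot$-tower contributes (this is what makes the fibre of $\Tot\sp{j}\to\Tot\sp{j-1}$ a $j$-fold loop space) interacts with the $\Omega\sp{p}$ coming from $S\sp{p}$ and with the path-space shifts in \ref{complexes}; getting this interaction right is what produces the precise suspension index $p+n$ in the statement. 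A secondary point is to confirm that it really is enough to remember only the $f\sb{n}$-component of $\Map\sb{X}(\Sigma\sp{p+n}\fy,x\sp{n})$, the lower cosimplicial degrees contributing only their zero components or components already pinned down by the earlier chosen stages of the diagram.
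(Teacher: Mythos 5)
Your proposal is correct and follows essentially the route the paper itself takes: the paper deduces this proposition directly from Corollary \ref{cor7.9} together with the obstruction-theoretic description of \w{\fd\sb{r}([f])} in \S \ref{scombdes}, exactly as you propose. The dimension/suspension bookkeeping you flag as the remaining obstacle is what the paper supplies in \S \ref{stail}, via the observation that all ingredients of the diagram are simplices of \w{\Map\sb{\Sa}(S\sp{p},\bW\bp{n+k}\sp{n+k})} in a bounded range and the identification \w[,]{\Po{r-1}\Map\sb{X}(\Sigma\sp{n+p}\hy,x\sp{n+k})\simeq\Po{r+n+p-1}\Map\sb{X}(\hy,x\sp{n+k})\lra{n+p}} so your cochain-complex/loop-suspension formulation differs only in bookkeeping, not in substance.
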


In order to obtain a cosimplicial version of Theorem \ref{tstem}, we need to replace
Definition \ref{dzmnp} by the following:

\begin{defn}\label{dgmnp}
Given \w{\fy\in X} as in \S \ref{sssso}, for any \w{n\geq 0} and \w{m,p\geq 1}
we let \w{G(n,m,\Sigma\sp{p}\hy)} denote the diagram
\mywdiag[\label{eqgmnp}]{
0 \ar@<2.5ex>[rr]\sp(0.3){d\sp{0}}\sb(0.3){\vdots} \ar@<-2.5ex>[rr]\sb(0.3){d\sp{n}} &&
\Sigma\sp{p}\hy\otimes \Deln{n}
\ar@<2.5ex>[rr]\sp(0.5){d\sp{0}}\sb(0.5){\vdots} \ar@<-2.5ex>[rr]\sb(0.5){d\sp{n+1}} &&
\Sigma\sp{p}\hy\otimes \Deln{n+1}\dotsc&
\Sigma\sp{p}\hy\otimes \Deln{n+m-1}
\ar@<2.5ex>[rr]\sp(0.5){d\sp{0}}\sb(0.5){\vdots} \ar@<-2.5ex>[rr]\sb(0.5){d\sp{n+m}} &&
\Sigma\sp{p}\hy\otimes \Deln{n+m}
}
\noindent in \w{\cpnX{n-1,n+m}} (unique up to weak equivalence), where we omit the
leftmost $0$ when \w[.]{n=0}
The notation \w{-\otimes\Deln{k}} serves merely as a placeholder,
to keep track of the cosimplicial identities. Set
\begin{myeq}\label{eqgr}
\G\sb{r}(\fy):=
\bigcup\sb{p\geq 1}~\bigcup\sb{n\geq 0}~\{G(n,r-1,\Sigma\sp{p}\hy)\}\cup
\bigcup\sb{1\leq m\leq\min\{p,r\}}
  \{G(n-m,m-1,\Sigma\sp{p-m+1}\hy)\}
\end{myeq}
\noindent with \w{\G(\fy)=\bigcup\sb{r=2}\sp{\infty}\,\G\sb{r}(\fy)} the collection
of all such diagrams, as in \S \ref{dzmnp}. The inclusions
\begin{myeq}\label{eqrestdiags}
\rDell{n,n+m}~\hookrightarrow~\rDell{n,n+m'}
\end{myeq}
\noindent again induce a partial order on the subset of diagrams in \w{\G(\fy)}
with a fixed $p$.  Note also that
\w{\Sigma G(n,m,\Sigma\sp{p}\fy)\simeq G(n,m,\Sigma\sp{p+1}\fy)} in
\w[.]{\cpnX{n-1,n+m}}

Thus given \w{\fy=\Sigma\hy} and \w{\xu} as in \S \ref{scoqc},
for each \w{r\geq 0} we define the cosimplicial \emph{$r$-stem} for
\w{\lra{\xu,\fy}} to be the system consisting of
\begin{myeq}\label{eqcstem}
\Po{m}\Map\sb{\cpnX{n-1,n+m}}(G(n,m,\Sigma\sp{p}\fy),\tau\sp{\ast}\xu)
\end{myeq}
\noindent for all \w[,]{G(n,m,\Sigma\sp{p}\fy)\in\G\sb{r}(\fy)}
under the various maps induced by \wref{eqrestdiags} and \wref[.]{eqstems}
Again, this is a more precise version of the ``spiral $r$-system'' of \cite[\S 5]{BBlanS}.
\end{defn}

\begin{mysubsection}{Cosimplicial stems and differentials}
\label{stail}
If $\mX$ is the simplicially enriched category corresponding to
the quasi-category $X$, as in \S \ref{spsss}, then not only the elements \w{[f]} in
\w[,]{E\sb{1}\sp{n,n+p}} but also \w{d\sb{1}([f])} (and thus the class of \w{[f]} in
\w[,]{E\sb{2}\sp{n,n+p}} if it survives) are determined by \w[.]{\Po{0}\mX\cong\ho X}

Once we know that \w{[f]} survives to \w{E\sb{r}\sp{n,n+p}} for \w[,]{r\geq 2} we know
from \S \ref{scombdes} that \w{\fd\sb{r}([f])} is determined by
the map \w{F:G(n,n+r-1,\Sigma\sp{p}\fy)\to\tau\sp{\ast}\xu} described by the part of
\wref{eqerrterm} in \w[.]{\cpnX{n,n+r-1}} Moreover, in this case the standard
simplicial enrichment of \w{\Sa} (see \cite[\S 1.5]{GJarS}) implies that all the
ingredients needed to describe the value of \w{\fd\sb{r}([f])} are contained in
\w{\Map\sb{\Sa}(S\sp{p},\bW\bp{n+k}\sp{n+k})\sp{i}} for \w{n\leq i\leq n+r-1}
(and \w[).]{0\leq k\leq r-1} In homotopy-invariant terms, this is determined by
\begin{myeq}\label{eqposstem}
\Po{r-1}\Map\sb{X}(\Sigma\sp{n+p}\hy,x\sp{n+k})\simeq
\Po{r+n+p-1}\Map\sb{X}(\hy,x\sp{n+k})\lra{n+p}\hsm \text{for}\ -1\leq k\leq r-1.
\end{myeq}
\noindent Fitting the spaces \wref{eqposstem} together to compute
\w{\fd\sb{r}([f])} is just taking the homotopy limit over the various coface maps to
calculate the mapping spaces of \wref[.]{eqcstem}

Thus we obtain the following analogue of Theorem \ref{tstem}:
\end{mysubsection}

\begin{thm}\label{tcstem}
Given \w{\fy\in X} and \w{\xu\in cX} as in \S \ref{scoqc} and \w[,]{r\geq 2}
the \ww{E\sp{r}}-term of the associated spectral sequence is determined
by the \wwb{r-2}stem of \w[.]{\lra{\xu,\fy}}
\end{thm}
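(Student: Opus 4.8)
The plan is to mirror the argument for the simplicial case (Theorem \ref{tstem}), using the cosimplicial diagrammatic description of the differentials established in Corollary \ref{cor7.9} and \S \ref{scombdes} in place of the simplicial one from \S \ref{sdiagdesc}. First I would fix \w{r\geq 2} and recall that, by the standard theory of the exact couple (as in \S \ref{sssso} and Remark \ref{rerterm}, applied in the cosimplicial setting of \S \ref{scoqc}), in order to compute \w{E\sb{r}\sp{n,n+p}} from \w{E\sb{1}} it suffices to know two things: which classes \w{[f]\in E\sb{1}\sp{n,n+p}} survive to \w{E\sb{r}}, and which classes in each \w{E\sb{1}\sp{n,n+p}} are hit by the image of \w{\fd\sb{m}} for \w{1\leq m\leq r-1}. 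By Corollary \ref{cor7.9}, the survival of \w{[f]} to \w{E\sb{r}} is detected by the existence of a diagram of the form \wref{eqerrterm}, indexed by \w{\rDell{n-1,n+r-1}\times\bone}, whose restriction to \w{\cpnX{n,n+r-1}} is precisely (the homotopy-coherent realization of) a map \w{G(n,r-1,\Sigma\sp{p}\hy)\to\tau\sp{\ast}\xu}; and the classes hit by the earlier differentials \w{\fd\sb{m}} are detected by diagrams of the form \wref{eqnulhtpy}, which by \S \ref{scombdes} and \S \ref{stail} are governed by maps out of \w{G(n-m,m-1,\Sigma\sp{p-m+1}\hy)}. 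These are exactly the diagrams collected in \w{\G\sb{r-2}(\fy)} (note the index shift: survival to \w{E\sb{r}} requires diagrams of ``length'' \w{r-1}, whereas the \wwb{r-2}stem contains the objects \w{G(n,r-2-1,-)=G(n,r-3,-)}\dots); I would need to check carefully that the length bookkeeping in \wref{eqgr} matches up so that \w{\G\sb{r-2}(\fy)} indexes exactly the diagrams needed to compute \w{E\sb{r}}, just as in the simplicial case the relevant diagrams have length \w{\leq r-1} but the stem is indexed by \w[.]{\HH\sp{r-2}}

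Next I would use \S \ref{stail} to reduce each such ``mapping out of \w{G(n,m,\Sigma\sp{p}\hy)}'' to the Postnikov-truncated data. By the discussion around \wref{eqposstem}, the value of \w{\fd\sb{r}([f])} (and whether it vanishes, over all choices of extension) depends only on the spaces \w{\Po{r-1}\Map\sb{X}(\Sigma\sp{n+p}\hy,x\sp{n+k})} for \w[,]{-1\leq k\leq r-1} and assembling these over the coface maps is precisely the homotopy limit computing \w{\Map\sb{\cpnX{n-1,n+m}}(G(n,m,\Sigma\sp{p}\fy),\tau\sp{\ast}\xu)}. Since the Postnikov section \w{\Po{m}} commutes with finite homotopy limits up to the usual range (and this is exactly why the truncation in \wref{eqcstem} is taken \emph{after} forming the mapping space), the \wwb{r-2}stem \wref{eqcstem}, i.e.\ the system of spaces \w{\Po{m}\Map\sb{\cpnX{n-1,n+m}}(G(n,m,\Sigma\sp{p}\fy),\tau\sp{\ast}\xu)} for \w[,]{G(n,m,\Sigma\sp{p}\fy)\in\G\sb{r-2}(\fy)} together with the comparison maps induced by \wref{eqrestdiags} and \wref{eqstems}, determines all of this data. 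Finally I would invoke Proposition \ref{rctrunc}, which packages precisely the statement that \w{\fd\sp{r}([f])} (over all extensions) depends only on \w[,]{\Po{r-1}\Map\sb{X}(\Sigma\sp{p+n}\fy,\xd)\sb{f\sb{n}}} to conclude that the full \w{E\sb{r}}-term is recovered from the \wwb{r-2}stem.

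The main obstacle I expect is the combinatorial/indexing verification: matching the length parameters in \wref{eqgr} against the diagram shapes \wref{eqerrterm} and \wref{eqnulhtpy} so that \w{\G\sb{r-2}(\fy)} contains exactly the objects needed, neither too few (which would break the argument) nor more than are genuinely used, and in particular correctly handling the boundary cases \w{n< r-1} analogous to Remark \ref{rerterm} (where diagrams of full length do not exist and one must instead use the diagrams that terminate at \w{\bW\sp{0}}, detecting survival ``by default''). A secondary technical point is ensuring that the Postnikov section commutes with the relevant homotopy limits in the correct range; this is where the specific placement of \w{\Po{m}} in \wref{eqcstem} matters, and one must cite or reproduce the standard fact (as in \cite{BousTH}) that \w{\Po{m}} preserves fibrations and homotopy pullbacks. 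Once these bookkeeping matters are settled, the argument is a routine transcription of the proof of Theorem \ref{tstem} via Proposition \ref{rctrunc} and Corollary \ref{cor7.9}.
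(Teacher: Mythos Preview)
Your proposal is correct and follows essentially the same approach as the paper: in fact the paper gives no separate proof, deducing the theorem directly from the discussion in \S \ref{stail} (via Proposition \ref{rctrunc} and Corollary \ref{cor7.9}), which is precisely the reduction you outline. Your honest flagging of the indexing bookkeeping between \w{\G\sb{r-2}(\fy)} and the diagrams of \wref{eqerrterm} is well placed \wh the paper simply asserts the match without spelling it out \wh but this is a verification, not a gap in the strategy.
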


This is a more precise version of \cite[Theorem 5.12]{BBlanS}.

\begin{mysubsection}{The Postnikov localization for cosimplicial objects}
\label{spcloc}
We can again reformulate Theorem \ref{tcstem} using the same pair of Bousfield
localizations on the quasi-category $Y$ of pointed $\infty$-groupoids
as in \S \ref{sploc}, except that in this case we use cosimplicial (rather than
cochain) windows, as in Remark \ref{rtruncations}:

If we set
\begin{myeq}\label{eqcproduct}
Z\sb{r}~:=~\prod\sb{G(n,m,\Sigma\sp{p}\hy)\in\G\sb{r}(\fy)}~\cpn{n-1,n-m+1}{Y}\bp{p}~,
\end{myeq}
\noindent as a quasi-category with fibrations (as in \S \ref{sploc}), the composites of
$$
\cpX~\xra{\Map\sb{X}(\hy,-)}~c\sp{+}Y~\xra{\tnk{n}{n-m+1}}~\cpn{n-1,n-m+1}{Y}
$$
\noindent combine to define a functor \w[,]{G\sb{r}:Z\sb{r}\to Z\sb{r}}
with the component of \w{G\sb{r}(\xu)} in \w{\cpn{n-1,n-m+1}{Y}\bp{p}} again
providing the \wwb{m,p,n}\emph{cosimplicial window} for \w[.]{\xu}

Again applying to \w{Z\sb{r}} the combined Bousfield localization
\w{\LLc\sp{m}\circ\R\sb{p}} of \S \ref{sploc} to each factor defines the $r$-th
\emph{Postnikov localization} functor \w[,]{\PPc\sp{r}:Z\sb{r}\to Z\sb{r}} where
\w{Z\sb{r}} has the structure of a quasi-category with fibrations and
\ww{\PPc\sp{r}}-equivalences.

Proposition \ref{rctrunc} again shows that the \ww{E\sp{r+2}}-page
of the spectral sequence for \w{\xu\in\cpX} is determined through
by \w[,]{G\sb{r}\xu} and we conclude from Theorem \ref{tcstem} the
following analogue of Corollary \ref{cstempr}:

\begin{corollary}\label{cctempr}
The \ww{\PPc\sp{r}}-equivalences induce isomorphisms of
the associated spectral sequences from the \ww{E\sb{r+2}}-term on.
\end{corollary}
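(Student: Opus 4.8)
The plan is to mimic exactly the proof of Corollary \ref{cstempr}, transporting it through the cosimplicial dictionary established in Sections \ref{cssrfcs}--\ref{ccsloc}. The backbone is this: by Proposition \ref{rctrunc}, the entire spectral sequence of $\lra{\xu,\fy}$ through the $\ww{E\sp{r+2}}$-term is computed from the collection of windows recorded in $G\sb{r}(\xu)$ --- more precisely, deciding whether a class $[f]\in E\sb{1}\sp{n,n+p}$ survives to $E\sb{r+1}$ and computing $\fd\sp{r}([f])$ requires only the data $\Po{r-1}\Map\sb{X}(\Sigma\sp{p+n}\fy,\xu)\sb{f\sb{n}}$, together with the earlier windows of the form $G(n-m,m-1,\Sigma\sp{p-m+1}\hy)$ that detect whether $[f]$ is in the image of an earlier differential (cf.\ Remark \ref{rdoublec} and Remark \ref{rerterm} in the simplicial case). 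So first I would record that the functor $G\sb{r}:\cpX\to Z\sb{r}$ of \S \ref{spcloc} has the property that the spectral sequence of $\lra{\xu,\fy}$ through $\ww{E\sb{r+2}}$ is a functor of $G\sb{r}(\xu)$ alone --- this is just the assembly of Proposition \ref{rctrunc} and Theorem \ref{tcstem} over the index set $\G\sb{r}(\fy)$.

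Next I would identify the $\ww{\PPc\sp{r}}$-equivalences concretely. By construction (the cosimplicial analogue of \S \ref{sploc}, invoking Theorem \ref{thm3.5} for the left factor $\LLc\sp{m}$ and Theorem \ref{thm4.10} for the right factor $\R\sb{p}$ applied to each $\cpn{n-1,n-m+1}{Y}\bp{p}$), a map in $Z\sb{r}$ is a $\ww{\PPc\sp{r}}$-equivalence precisely when, on the factor indexed by $G(n,m,\Sigma\sp{p}\hy)$, it becomes a Reedy weak equivalence after applying $\Po{m-1}(-)\lra{p}$ levelwise to the truncated cosimplicial space. The point is then that the window $\Po{m}\Map\sb{\cpnX{n-1,n+m}}(G(n,m,\Sigma\sp{p}\fy),\tau\sp{\ast}\xu)$ appearing in \wref{eqcstem} is computed --- by the adjunction of Lemma \ref{lem8.2} and the fact that $\Map\sb{X}(\Sigma\sp{p}\hy\otimes\Deln{k},-)\simeq\Omega\sp{p}\Map\sb{X}(\hy,-)^{\Deln{k}}$ --- as a (homotopy) limit over the coface maps in $\rDell{n-1,n+m}$ of the Postnikov-truncated, $p$-connected-cover-ed mapping spaces $\Po{m-1}(\Map\sb{X}(\hy,x\sp{k})\lra{p})$. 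Hence if $G\sb{r}(f):G\sb{r}(\xu)\to G\sb{r}(\yu)$ is a $\ww{\PPc\sp{r}}$-equivalence, it induces a levelwise equivalence of exactly these truncated cosimplicial diagrams, so after taking the homotopy limit it induces an equivalence on every mapping space of \wref[,]{eqcstem} i.e.\ an equivalence of the $(r)$-stems (recalling from \S \ref{stail} that the $\ww{E\sp{r}}$-term and differentials $\fd\sp{r}$ are extracted from these mapping spaces and the structure maps \wref{eqstems} and \wref{eqrestdiags} between them).

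Finally I would chain the two observations: a $\ww{\PPc\sp{r}}$-equivalence in $Z\sb{r}$ yields an equivalence of $(r)$-stems, which by Theorem \ref{tcstem} (applied with the index shifted, i.e.\ the $\ww{E\sp{r+2}}$-term is determined by the $r$-stem) gives an isomorphism of $\ww{E\sp{r+2}}$-terms; and this isomorphism is compatible with all higher differentials because Proposition \ref{rctrunc} shows each $\fd\sp{s}$ for $s\geq r+2$ depends only on Postnikov data that factors through $G\sb{r}$ --- or, more simply, once the $\ww{E\sp{r+2}}$-terms are identified compatibly with the differentials $d\sp{s}$ for all $s\geq r+2$, all later pages agree, giving an isomorphism of spectral sequences from the $\ww{E\sb{r+2}}$-term on. I would close by noting this is literally Corollary \ref{cstempr} with ``simplicial'' replaced by ``cosimplicial'', ``$\HH\sp{r}$'' by ``$\G\sb{r}$'', ``$\Chn{n}{k}$''/``$\spn{n,k}$'' by ``$\cpn{n-1,n+m}$'', and the simplicial diagrams $H(n,m,-)$ by the cosimplicial diagrams $G(n,m,-)$, so the proof is verbatim.

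The main obstacle I expect is bookkeeping in the index shifts: the ranges in \wref{eqgr} (which bundle both the ``is $[f]$ a $d\sp{r}$-cycle'' diagrams $G(n,r-1,-)$ and the ``is $[f]$ hit by an earlier differential'' diagrams $G(n-m,m-1,-)$) must be matched precisely against what Proposition \ref{rctrunc} and \S \ref{stail} actually require, and against the Postnikov degree $m-1$ versus truncation length recorded in the window $\cpn{n-1,n-m+1}{Y}\bp{p}$. Getting the $r$ versus $r-2$ versus $r+2$ offsets right --- the $(r-2)$-stem determines $\ww{E\sp{r}}$, hence the $r$-stem determines $\ww{E\sp{r+2}}$, hence $\ww{\PPc\sp{r}}$-equivalences act trivially from $\ww{E\sb{r+2}}$ on --- is the only genuinely delicate point; everything else is a formal transport of the simplicial argument, using that the cosimplicial $\Tot$-tower analysis of \S \ref{stott} plays the role the chain-complex/geometric-realization analysis played in the simplicial case.
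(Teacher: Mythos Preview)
Your proposal is correct and follows essentially the same approach as the paper: the paper simply notes that Proposition \ref{rctrunc} shows the \ww{E\sb{r+2}}-page is determined by \w{G\sb{r}\xu} and then deduces the corollary from Theorem \ref{tcstem}, exactly as in the simplicial case. Your write-up is considerably more detailed than the paper's one-line deduction, but the underlying logic is identical.
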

\end{mysubsection}

\begin{mysubsection}{The $\E\sb{r}$-localization}
\label{scerl}
As in \S \ref{serl}, Assumption \ref{ass6.1}(1) for the Reedy structure on $X$
follows from the same assumption for \w{(X,\Cof,\W)} in \S \ref{ass6.1},
since cofibrations and weak equivalences in diagram categories are defined levelwise.
For (2), a family of generating (trivial) cofibrations for the Reedy
structure can be produced from a family of generating (trivial) cofibrations for
\w{(X,\Cof,\W)} by a standard argument.
For (3), we can identify the localization map \w{\cpX\to \LR(\cpX)} with the map
\w{\cpX\to c\sp{+}\LWX} by \cite[Theorem 7.6.17]{CisiH}.
Thus, the localization map is accessible, since colimits of presheaf categories
can be calculated pointwise by \cite[5.1.2.2]{LurieHTT}.

Therefore, if we let \w{\E\sb{r}} denote the set of left Kan extensions of \w{\G\sb{r}}
(see \S \ref{dgmnp}) from the relevant truncations of \w[,]{\cpX} as in \S \ref{serl},
Theorem \ref{thm6.7} implies that \w{\cpX} has the structure of a quasi-category with
cofibrations and weak equivalences, in which the latter are the
\ww{\E\sb{r}}-equivalences \wh that is,
\ww{\E\sb{r}}-local maps \w{\xu\to\yu} in \w[.]{\cpX}  We denote by \w{R\sb{\E\sb{r}}} the right Bousfield
localization of \w{\cpX} with respect to \w[.]{\E\sb{r}}
\end{mysubsection}

We deduce from Corollary \ref{cor7.9}:

\begin{corollary}\label{thm8.6}
A \ww{\E\sb{r}}-equivalence \w{f: \xu \to \yu} in \w{\cpX} induces a bijection of
the associated  spectral sequences at the \ww{E\sb{r}}-term.
\end{corollary}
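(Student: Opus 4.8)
The plan is to deduce Corollary~\ref{thm8.6} directly from the characterization of surviving classes in Corollary~\ref{cor7.9}, exactly as Corollary~\ref{cerloc} was deduced from the simplicial analogue. First I would recall that, by construction, the $\E\sb{r}$-equivalences in $\cpX$ are the maps $f:\xu\to\yu$ which induce weak equivalences $\Map\sb{\cpX}(\LKE\sb{m,n}(G(n,m,\Sigma\sp{p}\hy)),\xu)\to\Map\sb{\cpX}(\LKE\sb{m,n}(G(n,m,\Sigma\sp{p}\hy)),\yu)$ for every $G(n,m,\Sigma\sp{p}\hy)\in\G\sb{r}(\fy)$; by the left Kan extension adjunction (and the fact that such extensions along fully faithful inclusions are fully faithful, \cite[Proposition 4.3.2.17]{LurieHTT}), this is the same as asking that $f$ induce weak equivalences on the mapping spaces $\Map\sb{\cpnX{n-1,n+m}}(G(n,m,\Sigma\sp{p}\hy),\tau\sp{\ast}\xu)$ out of the finite diagrams $G(n,m,\Sigma\sp{p}\hy)$ in the appropriate truncation $\cpnX{n-1,n+m}$.

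Next I would translate this into a statement about the spaces of diagrams appearing in Corollary~\ref{cor7.9}. By \cite[Theorem 6.7]{Riehl1} and the Dwyer–Kan homotopy coherence theorem \cite{DKSmitH}, a map $B(\rDele{n+r-1})\to\Sa$ of the form \eqref{eqerrterm} is the same datum (up to a contractible space of choices) as an element of the mapping space out of $G(n,r-1,\Sigma\sp{p}\fy)$; the diagrams of the form $G(n-m,m-1,\Sigma\sp{p-m+1}\fy)$ account for the possibility that $[f]$ is hit by an earlier differential (cf.\ Remark~\ref{rdoublec} in the simplicial case). Thus for each $(n,p)$, both the set of classes $[f]\in E\sb{1}\sp{n,n+p}$ surviving to $E\sb{r}$ and the subset of those which are boundaries of $\fd\sb{m}$ for some $m<r$ are computed from the homotopy types of precisely the mapping spaces $\Map\sb{\cpnX{n-1,n+m}}(G(n,m,\Sigma\sp{p}\fy),\tau\sp{\ast}\xu)$ indexed by $\G\sb{r}(\fy)$. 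An $\E\sb{r}$-equivalence $f$ induces weak equivalences on all of these, hence a bijection on the surviving classes and on the boundary classes, and therefore a bijection $E\sb{r}\sp{n,n+p}(\xu)\cong E\sb{r}\sp{n,n+p}(\yu)$ for all $n,p$.

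Two bookkeeping points need care, and I expect the second to be the main obstacle. First, one must check that the homotopy coherent diagram produced by Corollary~\ref{cor7.9} genuinely uses only the restricted cosimplicial object $\tau\sp{\ast}\xu$ on $\rDele{n+r-1}$ and not the codegeneracies — but this is guaranteed by the requirement spelled out at the start of Section~\ref{cssrfcs} and by the fact that $\bW\bp{m}\sp{m}=\bW\sp{m}$, so only the coface structure enters diagram \eqref{eqerrterm}. Second, and more delicately, one must confirm that the indexing family $\G\sb{r}(\fy)$ really contains \emph{all} the diagrams needed: not only the ``$d\sb{r}$-cycle'' diagrams $G(n,r-1,\Sigma\sp{p}\fy)$ but also, via the union over $1\leq m\leq\min\{p,r\}$ in \eqref{eqgr}, the diagrams $G(n-m,m-1,\Sigma\sp{p-m+1}\fy)$ that detect whether $[f]$ lies in the image of $\fd\sb{m}$ — this is the cosimplicial analogue of Remark~\ref{rerterm}, and getting the index ranges exactly right (including the edge case $n<r-1$, which in the cosimplicial setting does not arise since cosimplicial degree is unbounded below by $0$ only) is where the argument could go wrong. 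Once these are verified, the conclusion is immediate: an $\E\sb{r}$-equivalence induces weak equivalences on every mapping space in the $r$-stem indexing family, hence an isomorphism of $E\sb{r}$-terms.
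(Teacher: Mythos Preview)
Your proposal is correct and follows exactly the route the paper indicates: the paper simply states ``We deduce from Corollary~\ref{cor7.9}'' without further elaboration, and your argument spells out precisely how that deduction goes, mirroring the simplicial case of Corollary~\ref{cerloc} and \S\ref{serl}. The bookkeeping concerns you raise are appropriate, but the paper treats them as already settled by the definition of $\G\sb{r}(\fy)$ in \wref{eqgr} and the discussion in \S\ref{scerl}, so no further work is needed.
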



\begin{thebibliography}{DKHS}
%
\bibitem[Bar]{BarwL}
C.~Barwick,
``On Left and Right Model Categories and Left and Right Bousfield Localizations'',\hsm
\textit{Homology, Homotopy \& Appl.} \textbf{12} (2010), 245-320.
%
\bibitem[BBS]{BBSenH}
S.~Basu, D.~Blanc, \& D.~Sen,
``Higher structure in the unstable Adams spectral se\-quence'',\hsm
\textit{Homology, Homotopy \& Appl.} \textbf{23} (2021), 69-94.
%
\bibitem[Bau]{BaueA}
H.-J.~Baues,
\textit{Algebraic homotopy},\hsm
Camb.\ Stud.\ Adv.\ Math.\ \textbf{15}, Cambridge U. Press, Cambridge, 1989
%
\bibitem[BB]{BBlanS}
H.-J.~Baues \& D.~Blanc
``Stems and spectral sequences'',\hsm
\textit{Alg.\ Geom.\ Top.} \textbf{10} (2010), 2061-2078.
%
\bibitem[Be1]{BergM}
J.E.~Bergner,
``A model cate\-gory struc\-ture on the ca\-tegory of simp\-licial cate\-go\-ries'',\hsm
\textit{Trans.\ AMS} \textbf{359} (2007), 2043-2058.
%
\bibitem[Be2]{BergH}
J.E.~Bergner,
\textit{The homotopy theory of $(\infty,1)$-categories},\hsm
Lond.\ Math.\ Soc.\ Stud.\ Texts\textbf{90}, Cambridge U. Press, Cambridge, 2018.
%
\bibitem[BJT]{BJTurnHI}
D.~Blanc, M.W.~Johnson, \& J.M.~Turner,
``Higher invariants for spaces and maps'',\hsm
\textit{Alg.\ Geom.\ Top.} \textbf{21} (2021), 2425-2488.
%
\bibitem[BM]{BMeadS}
D.~Blanc \& N.J.~Meadows,
``Spectral Sequences in $(\infty, 1)$-Categories'',\hsm
\textit{J.\ Pure \& Appl.\ Alg.}, to appear.
%
\bibitem[BS]{BSenH}
D.~Blanc \& D.~Sen,
``Higher cohomology operations and $R$-completion'',
\textit{Alg.\ Geom.\ Top.} \textbf{18} (2018), 247-312.
%
\bibitem[Bo]{BousTH}
A.K.~Bousfield,
``On the telescopic homotopy theory of spaces'',\hsm
\textit{Trans.\ AMS} \textbf{353} (2001), 2391-2426.
%
\bibitem[BF]{BFrieH}
A.K.~Bousfield \& E.M.~Friedlander,
``Homotopy theory of $\Gamma$-spaces, spectra, and bisimplicial sets'',\hsm
in M.G.~Barratt \& M.E.~Mahowald, eds.,
\textit{Geometric Applications of Homotopy Theory, II}
Springer \textit{Lec.\ Notes Math.} \textbf{658}, Berlin-\-New York, 1978,
80-130.
%
\bibitem[BK1]{BKanH}
A.K.~Bousfield \& D.M.~Kan,
\textit{Homotopy Limits, Completions, and Localizations},\hsm
Springer \textit{Lec.\ Notes Math.} \textbf{304}, Berlin-\-New York, 1972.
%
\bibitem[BK2]{BKanS}
A.K.~Bousfield \& D.M.~Kan,
``The homotopy spectral sequence of a space with coefficients in a ring'',\hsm
\textit{Topology} \textbf{11} (1972), 79-106.
%
\bibitem[BK3]{BKanSQ}
A.K.~Bousfield \& D.M.~Kan,
``A second quadrant homotopy spectral sequence'',\hsm
\textit{Trans.\ AMS} \textbf{177} (1973), 305-318.
%
\bibitem[Br]{KBrowA}
K.~Brown,
``Abstract Homotopy Theory and Generalized Sheaf Cohomology'',\hsm
\textit{Trans.\ AMS} \textbf{186} (1973), 419-458
%
\bibitem[CELW]{CELWhitM}
J.~Cirici, D.~Egas Santander, M.~Livernet, \& S.~Whitehouse,
``Model category structures and spectral sequences'',\hsm
\textit{Proc.\ Roy.\ Soc.\ Edin., A} \textbf{150} (2020), 2815-2848.
%
\bibitem[C]{CisiH}
D.-C.~Cisinski,
\textit{Higher categories and homotopical algebra},\hsm
Cambridge U.\ Press, Cambridge, UK, 2019.
%
\bibitem[Do]{DolH}
A.~Dold,
``Homology of symmetric products and other functors of complexes'',\hsm
\textit{Ann.\ Math.\ (2)} \textbf{68} (1958), 54-80.
%
\bibitem[Dr]{DroC}
E.~Dror-Farjoun,
\textit{Cellular spaces, null spaces, and homotopy localization},\hsm
Springer \textit{Lec.\ Notes Math.} \textbf{1622}, Berlin-\-New York, 1995.
%
\bibitem[Du]{DuggC}
D.~Dugger,
``Combinatorial Model Categories Have Presentations'',\hsm
\textit{Adv.\ Math} \textbf{164} (2001), 177-201.
%
\bibitem[DS]{DSpivR}
D.~Dugger \& D.I.~Spivak,
``Rigidification of quasi-categories'',\hsm
\textit{Alg.\ Geom.\ Top.} \textbf{11} (2011), 225-261.
%
\bibitem[DK1]{DKanL}
W.G.~Dwyer \& D.M.~Kan,
``Simplicial localization of categories'',\hsm
\textit{J.\ Pure \& Appl.\ Alg.} \textbf{31} (1980), 267-284.
%
\bibitem[DK2]{DKanF}
W.G.~Dwyer \& D.M.~Kan,
``Function Complexes in Homotopical Algebra'',\hsm
\textit{Topology} \textbf{19} (1980), 427-440.
%
\bibitem[DKSm]{DKSmitH}
W.G.~Dwyer, D.M.~Kan, \& J.H.~Smith,
``Homotopy commutative diagrams and their realizations'',\hsm
\textit{J.\ Pure Appl.\ Alg.} \textbf{57} (1989), 5-24.
%
\bibitem[DKSt]{DKStovB}
W.G.~Dwyer, D.M.~Kan, \& C.R.~Stover,
``The bigraded homotopy groups $\pi\sb{i,j}X$ of a pointed simplicial space'',\hsm
\textit{J.\ Pure Appl.\ Alg.} \textbf{103} (1995), 167-188.
%
\bibitem[GJ]{GJarS}
P.G.~Goerss \& J.F.~Jardine,
\textit{Simplicial Homotopy Theory},\hsm
Progress in Math.\ \textbf{179}, Birkh\"{a}user, Basel-Boston, 1999.
%
\bibitem[Hin]{HiniDK}
V.A.~Hinich,
``Dwyer-Kan localization revisited'',\hsm
\textit{Homology, Homotopy \& Appl.} \textbf{18} (2016), 27-48.
%
\bibitem[Hir]{HirM}
P.~Hirschorn,
\textit{Model Categories and their Localizations},\hsm
Math. Surveys \& Monographs \ \textbf{99}, AMS, Providence, RI, 2003.
%
\bibitem[Ja]{JardC}
J.F.~Jardine,
``Categorical homotopy theory'',\hsm
\textit{Homology, Homotopy \& Appl.} \textbf{8} (2006), 71-144.
%
\bibitem[Jo1]{JoyQ}
A.~Joyal,
``Quasi-categories and Kan complexes'',\hsm
\textit{J.\ Pure Appl.\ Alg.} \textbf{175} (2002), 22-38.
%
\bibitem[Jo2]{JoyQA}
A.~Joyal,
``The Theory of Quasi-Categories and its Applications'',\hsm
Preprint, 2008, {\tt http://mat.uab.cat/~kock/crm/hocat/advanced-course/Quadern45-2.pdf}.
%
\bibitem[L]{LurieHTT}
J.~Lurie,
\textit{Higher Topos Theory},\hsm
Ann.\ Math.\ Studies \textbf{170}, Princeton U.\ Press, Princeton, 2009.
%
\bibitem[M]{McCleU}
J.~McCleary,
\textit{User's Guide to Spectral Sequences},\hsm
Cambridge U. Press, Cambridge, 2001.
%
\bibitem[NRSc]{Raptis}
H.~Nguyen, G.~Raptis, \& C.~Shrade,
``Adjoint functor theorems for $\infty$-categories'',\hsm
\textit{J.\ London Math Soc.\ (2)} \textbf{101} (2019), 659-681.
%
\bibitem[Q1]{QuiH}
D.G.~Quillen,
\textit{Homotopical Algebra},\hsm
Springer-\-Verlag \textit{Lec.\ Notes Math.} \textbf{20}, Berlin-\-New York,
1963.
%
\bibitem[Q2]{QuiS}
D.G.~Quillen,
``Spectral sequences of a double semi-simplicial group'',\hsm
\textit{Topology} \textbf{5} (1966), 155-156.
%
\bibitem[R]{Riehl1}
E.~Riehl,
``On the Structure of Simplicial Categories Associated to Quasi-Catego\-ries'',\hsn
\textit{Math.\ Proc.\ Camb.\ Phil.\ Soc.} \textbf{150} (2011), 489-504.
%
\bibitem[Se]{SegCC}
G.B. Segal,
``Categories and cohomology theories'',\hsm
\textit{Topology} \textbf{13} (1974), 293-312.
%
\bibitem[St]{StoV}
C.R.~Stover,
``A Van Kampen spectral sequence for higher homotopy groups'',\hsm
\textit{Topology} \textbf{29} (1990), 9-26.
%
\bibitem[T]{ToenA}
B.~To{\"{e}}n,
``Vers une Axiomatisation de la Th{\'{e}}orie des Cat{\'{e}}gories Sup{\'{e}}rieures'',
\textit{$K$-Theory} \textbf{34} (2005), 233-263.
%

\end{thebibliography}
\end{document}